\newtheorem{theorem}{Theorem}[section]
\newtheorem{definition}[theorem]{Definition}
\newtheorem{proposition}[theorem]{Proposition}
\begin{document}

\title[Wishart matrices]{Block-modified Wishart matrices: the easy case}

\author{Teodor Banica}
\address{T.B.: Department of Mathematics, University of Cergy-Pontoise, F-95000 Cergy-Pontoise, France. {\tt teo.banica@gmail.com}}

\subjclass[2010]{60B20 (46L54)}
\keywords{Wishart matrix, Compound free Poisson law}

\begin{abstract}
Associated to any complex Wishart matrix $W$ of parameters $(dn,dm)$ and any linear map $\varphi:M_n(\mathbb C)\to M_n(\mathbb C)$ is the ``block-modified'' matrix $\tilde{W}=(id\otimes\varphi)W$. Following some previous work with Nechita, we study here the asymptotic $*$-distribution of $\tilde{W}$, in the $d\to\infty$ limit, in the case where the modification map $\varphi$ is ``easy'', or more generally super-easy, in the quantum algebra/representation theory sense. Under suitable assumptions on $\varphi$ we obtain in this way a compound free Poisson law.
\end{abstract}

\maketitle

\section*{Introduction}

One of the guiding principles of Random Matrix Theory is that ``you can get anything inside a usual random matrix''. Indeed, the random matrices are known to encode a bewildering quantity of interesting mathematical and physical phenomena. See \cite{agz}, \cite{meh}.

A particularly efficient construction, which leads to a vast and beautiful combinatorial landscape, which has been barely explored so far, consists in looking at the asymptotic distribution of the block-modified Wishart matrices. To be more precise, consider a Wishart matrix $W$ of parameters $(dn,dm)$, with $d,n,m\in\mathbb N$. Associated to any linear map $\varphi:M_n(\mathbb C)\to M_n(\mathbb C)$ is then the ``block-modified'' matrix $\tilde{W}=(id\otimes\varphi)W$, and the problem is that of computing the $*$-distribution of $\tilde{W}$, in the $d\to\infty$ limit.

All this goes back to a 2012 paper of Aubrun \cite{aub}, where the case of the transposition map $\varphi(A)=A^t$ was considered. Aubrun's computation, leading to shifted semicircles, was generalized soon after in \cite{bn1}, with a result leading to certain free differences of free Poisson laws. A further generalization, leading to large classes of compound free Poisson laws, covering as well the result of Marchenko-Pastur \cite{mpa}, and some computations from \cite{cn1}, \cite{cn2} was worked out in \cite{bn2}. A number of supplementary results on the subject, sometimes obtained in different asymptotic regimes, are now available from \cite{anv}, \cite{fsn}, \cite{mpo}.

\bigskip

According to the general theory in \cite{bn2}, the 4 ``basic'' computations, corresponding to results in \cite{bn1}, \cite{cn1}, \cite{cn2}, \cite{mpa}, come from the following 4 diagrams: 
$$\pi_1=\begin{bmatrix}\circ&\bullet\\ \circ&\bullet\end{bmatrix}\quad,\quad
\pi_2=\begin{bmatrix}\circ&\bullet\\ \bullet&\circ\end{bmatrix}\quad,\quad
\pi_3=\begin{bmatrix}\circ&\circ\\ \bullet&\bullet\end{bmatrix}\quad,\quad
\pi_4=\begin{bmatrix}\circ&\circ\\ \circ&\circ\end{bmatrix}$$

To be more precise, these diagrams are the partitions $\pi\in P_{even}(2,2)$, between an upper row of 2 points and a lower row of 2 points, having all the blocks of even size. Now given such a partition $\pi\in P_{even}(2,2)$, we can associate to it the following linear map:
$$\varphi_\pi(e_{ac})=\sum_{bd}\delta_\pi\begin{pmatrix}a&c\\ b&d\end{pmatrix}e_{bd}$$

Here $\{e_{ac}\}$ is the standard basis of $M_n(\mathbb C)$, and $\delta_\pi\in\{0,1\}$ is given by $\delta_\pi=1$ when each block of $\pi$ contains equal indices, and $\delta_\pi=0$ otherwise.

With this convention, the linear maps associated to $\pi_1,\pi_2,\pi_3,\pi_4$ are as follows, where $A^\delta\in M_n(\mathbb C)$ denotes the diagonal of a given matrix $A\in M_n(\mathbb C)$:
$$\varphi_1(A)=A\quad,\quad
\varphi_2(A)=A^t\quad,\quad
\varphi_3(A)=Tr(A)1\quad,\quad
\varphi_4(A)=A^\delta$$

We recognize here the null block-modification, leading to the result of Marchenko-Pastur \cite{mpa}, the transposition, which leads to the results in \cite{aub}, \cite{bn1}, and then the trace and the diagonal restriction map, leading to the extra computations from \cite{cn1}, \cite{cn2}.

\bigskip

Summarizing, we have here an interesting phenomenon, relating partitions and Wishart matrices, worthly of exploration. Some preliminary results and explanations were obtained in \cite{bn2}, and some further work on the subject was performed afterwards, in \cite{anv}. 

Generally speaking, the idea in \cite{anv}, \cite{bn2} was to develop very general theories, based on the above observation. In \cite{bn2} such a theory was developed, by using a somewhat heavy diagrammatic formalism (the ``strings and beads'' operad). As for the work in \cite{anv}, the theory developed there is quite abstract too, using operator-valued free probability.

Our purpose here is to go somehow in an opposite, much more concrete direction. To be more precise, our philosophy will be that of using partitions $\pi\in P_{even}(2s,2s)$, with $s\in\mathbb N$ being arbitrary, and trying to get ``the best we can'' from these partitions.

\bigskip

As a starting point for the present considerations, the partitions $\pi\in P_{even}(2s,2s)$ are familiar objects, appearing in the representation theory of the hyperoctahedral group $H_N$. The noncrossing ones, $\pi\in NC_{even}(2s,2s)$, are familiar objects too, appearing in the representation theory of the hyperoctahedral quantum group $H_N^+$. All this goes back to our 2007 paper \cite{bbc} with Bichon and Collins, and the findings there, and from the subsequent paper \cite{bb+}, with Belinschi, Capitaine and Collins, led to the axiomatization of the ``easy quantum groups'', worked out in our 2009 paper with Speicher \cite{bsp}.

\bigskip

Roughly speaking, a compact Lie group or quantum group $G$ is called easy when its Tannakian dual comes from ``easy maps'', with these latter maps being associated to partitions. In the case of a partition $\pi\in P_{even}(2s,2s)$, corresponding to $G=H_N$, the associated easy map is, up to a certain contraction of the tensors, as follows:
$$\varphi_\pi(e_{a_1\ldots a_s,c_1\ldots c_s})=\sum_{b_1\ldots b_s}\sum_{d_1\ldots d_s}\delta_\pi\begin{pmatrix}a_1&\ldots&a_s&c_1&\ldots&c_s\\ b_1&\ldots&b_s&d_1&\ldots&d_s\end{pmatrix}e_{b_1\ldots b_s,d_1\ldots d_s}$$

Observe the similarity with the previous formula of $\varphi_\pi$, for $\pi\in P_{even}(2,2)$. In fact, what we have here is a generalization of the $s=1$ formula, valid at any $s\in\mathbb N$.

Summing up, we have a quite concrete problem to be solved, namely that of computing the asympotic distribution of the block-modified Wishart matrix $\tilde{W}=(id\otimes\varphi)W$, in the case where $\varphi=\varphi_\pi$ is an easy map, coming from a partition $\pi\in P_{even}(2s,2s)$.

\bigskip

We will perform our study at several generality levels, as follows:

\bigskip

{\bf I.} Following the previous work in \cite{bn2}, we will first study the partitions $\pi\in P_{even}(2s,2s)$ which are symmetric, with respect to the blockwise middle symmetry:
$$\begin{bmatrix}c_1&\ldots&c_s&a_1&\ldots&a_s\\ d_1&\ldots&d_s&b_1&\ldots&b_s\end{bmatrix}
\longleftrightarrow\begin{bmatrix}a_1&\ldots&a_s&c_1&\ldots&c_s\\ b_1&\ldots&b_s&d_1&\ldots&d_s\end{bmatrix}$$

Our study here will take advantage of the fact that we use now a lighter formalism. We will improve the previous findings in \cite{bn2}, with a stronger result on the subject.

\bigskip

{\bf II.} We will discuss then the $q=-1$ twisting of this result, by using the general theory developed in \cite{ba1}. The twisting procedure requires crossings, in order to be non-trivial, and at $s=1$ we only have one example, coming from the following partition:
$$\pi=\begin{bmatrix}\circ&\bullet\\ \bullet&\circ\end{bmatrix}$$

The corresponding twisted map is $\bar{\varphi}_\pi(A)=2A^\delta-A^t$, and at the block-modified Wishart matrix level, the law that we obtain is in fact the same as for $\varphi_\pi(A)=A^t$. 

Our main result here will be a generalization of this fact, to the case $\pi\in P_{even}(2s,2s)$. This is interesting, theoretically speaking, in view of \cite{ba1} and subsequent work.

\bigskip

{\bf III.} The twisted maps $\bar{\varphi}_\pi$ are obtained from the untwisted ones $\varphi_\pi$ by replacing the Kronecker symbols $\delta_\pi\in\{0,1\}$ by signed Kronecker symbols $\bar{\delta}_\pi\in\{-1,0,1\}$, the precise formula being as follows, with $\varepsilon:P_{even}\to\{-1,1\}$ being the signature map:
$$\bar{\varphi}_\pi(e_{a_1\ldots a_s,c_1\ldots c_s})=\sum_{\sigma\leq\pi}\varepsilon(\sigma)\sum_{\ker(^{ac}_{bd})=\sigma}e_{b_1\ldots b_s,d_1\ldots d_s}$$

One interesting question is that of further extending our formalism, by allowing the Kronecker symbols to take complex values, $\delta_\pi\in\mathbb T\cup\{0\}$. We believe that such an extended formalism can cover the free Bessel laws, constructed in \cite{bb+}, in $*$-moments. We will comment here on this question, with some preliminary observations.

\bigskip

There are of course many questions arising from the present work, some of them being of pure random matrix nature, and some other being in connection with the ongoing classification program for the easy quantum groups, and notably with the recent paper \cite{ba2}. We will comment on these problems at the end of the paper. 

The paper is organized as follows: in 1-2 we review the basic block modification theory from \cite{bn2}, with a few technical changes, in 3-4 we discuss the easy case, with a number of preliminary results, in 5-6 we state and prove our main results, regarding the easy case, in 7-8 we discuss the twisted easy case, and in 9-10 we discuss the free Bessel laws.

\bigskip

\noindent {\bf Acknowledgements.} I would like to thank O. Arizmendi and I. Nechita for useful discussions.

\section{Wishart matrices}

Consider a complex Wishart matrix of parameters $(dn,dm)$. In other words, we start with a $dn\times dm$ matrix $G$ having independent complex $\mathcal N(0,1)$ entries, and we set:
$$W=\frac{1}{dm}\,GG^*$$

This matrix has size $dn\times dn$, and is best thought of as being a $d\times d$ array of $n\times n$ matrices. We are interested here in the study of the ``block-modified'' versions of $W$, obtained by applying a given linear map $\varphi:M_n(\mathbb C)\to M_n(\mathbb C)$ to the $n\times n$ blocks.

With standard tensor product notations, this construction is as follows:

\begin{definition}
Associated to any Wishart matrix $W$ of parameters $(dn,dm)$ and any linear map $\varphi:M_n(\mathbb C)\to M_n(\mathbb C)$ is the ``block-modified'' matrix $\tilde{W}=(id\otimes\varphi)W$.
\end{definition}

We would like to compute the limiting $d\to\infty$ eigenvalue distribution of $\tilde{W}$, as a function of the modification map $\varphi$. Since $\tilde{W}$ is in general not self-adjoint, in order to have a complete picture, we must in fact compute its $*$-distribution. For this purpose, we use the moment method, or rather the $*$-moment method.

We use the following $*$-moment formalism:

\begin{definition}
The $*$-moments of a random matrix $X$, which depend on an integer $p\in\mathbb N$ and on a sequence of exponents $e_1,\ldots,e_p\in\{1,*\}$, are given by:
$$M^e(X)=(\mathbb E\circ tr)(X^{e_1}\ldots X^{e_p})$$
More generally, assuming that we are given as well a permutation $\sigma\in S_p$, we set
$$M_\sigma^e(X)=\frac{1}{n^{|\sigma|}}\,\mathbb E\left(\sum_{i_1\ldots i_p}X^{e_1}_{i_1i_{\sigma(1)}}\ldots X^{e_p}_{i_pi_{\sigma(p)}}\right)$$
where $n$ is the size of $X$, and $|\sigma|$ is the number of cycles of $\sigma$, and call these numbers generalized $*$-moments of $X$.
\end{definition}

Observe that when the sequence of exponents is $e=(1,\ldots,1)$, the $*$-moment $M^e(X)$ coincides with the usual moment $M_p(X)=(\mathbb E\circ tr)(X^p)$. As already mentioned, the need for the $*$-moments comes from the fact that $X$ can be not self-adjoint. See \cite{vdn}.

Regarding now the generalized $*$-moments, this is a technical definition, in the spirit of \cite{nsp}, that we will need in what follows. Consider the standard cycle in $S_p$, namely:
$$\gamma=(1\to2\to\ldots\to p\to 1)$$

If we use this permutation, the corresponding generalized $*$-moment is:
$$M_\gamma^e(X)=\frac{1}{n}\,\mathbb E\left(\sum_{i_1\ldots i_p}X^{e_1}_{i_1i_2}\ldots X^{e_p}_{i_pi_1}\right)=(\mathbb E\circ tr)(X^{e_1}\ldots X^{e_p})$$

In general, we can decompose the computation of $M_\sigma^e(X)$ over the cycles of $\sigma$, and we obtain in this way a certain product of $*$-moments of $X$. See \cite{nsp}.

Let us go back now to our block-modified Wishart matrix $\tilde{W}=(id\otimes\varphi)W$. According to \cite{bn2}, we can expect the formula for the asymptotic $*$-moments of $\tilde{W}$ to involve in fact not $\varphi$ itself, but rather the $*$-moments of a certain matrix $\Lambda$ associated to $\varphi$. 

To be more precise, the definition that we will need is as follows:

\begin{definition}
We use the Choi-Jamiolkowski correspondence between linear maps $\varphi:M_n(\mathbb C)\to M_n(\mathbb C)$ and square matrices $\Lambda\in M_n(\mathbb C)\otimes M_n(\mathbb C)$ given by
$$\Lambda_{ab,cd}=\varphi(e_{ac})_{bd}$$
with the convention $\Lambda=\sum_{abcd}\Lambda_{ab,cd}e_{ac}\otimes e_{bd}$, where $e_{ab}\in M_n(\mathbb C)$ are the standard generators, given by $e_{ab}:e_b\to e_a$, with $\{e_1,\ldots,e_n\}$ being the standard basis of $\mathbb C^n$.
\end{definition}

In what follows we will use either the map $\varphi$, or the matrix $\Lambda$, depending on what suits us best. Observe that in matrix notation, the entries of $\tilde{W}$ are given by:
$$\tilde{W}_{ia,jb}=\sum_{cd}W_{ic,jd}\varphi(e_{cd})_{ab}=\sum_{cd}\Lambda_{ca,db}W_{ic,jd}$$

Regarding the generalized $*$-moments, we have the following formula, valid for any permutations $\sigma,\tau\in S_p$, that we will heavily use in what follows:
$$(M_\sigma^e\otimes M_\tau^e)(\Lambda)=\frac{1}{n^{|\sigma|+|\tau|}}\sum_{i_1\ldots i_p}\sum_{j_1\ldots j_p}\Lambda_{i_1j_1,i_{\sigma(1)}j_{\tau(1)}}^{e_1}\ldots\ldots\Lambda_{i_pj_p,i_{\sigma(p)}j_{\tau(p)}}^{e_p}$$

Consider the embedding $NC_p\subset S_p$ obtained by ``cycling inside each block''. That is, each block $b=\{b_1,\ldots,b_k\}$ with $b_1<\ldots<b_k$ of a given noncrossing partition $\sigma\in NC_p$ produces by definition the cycle $(b_1\ldots b_k)$ of the corresponding permutation $\sigma\in S_p$. 

Observe that the one-block partition $\gamma\in NC_p$ corresponds in this way to the standard cycle $\gamma\in S_p$. Also, the number of blocks $|\sigma|$ of a partition $\sigma\in NC_p$ corresponds in this way to the number of cycles $|\sigma|$ of the corresponding permutation $\sigma\in S_p$. 

We have now all the needed ingredients for doing our $*$-moment computation. The result here, extending the usual moment computation from \cite{bn2}, is as follows:

\begin{proposition}
The asymptotic $*$-moments of an arbitrary block-modified Wishart matrix $\tilde{W}=(id\otimes\varphi)W$, with parameters $d,m,n\in\mathbb N$, are given by
$$\lim_{d\to\infty}M_p^e\left(m\tilde{W}\right)=\sum_{\sigma\in NC_p}(mn)^{|\sigma|}(M_\sigma^e\otimes M_\gamma^e)(\Lambda)$$
where $\Lambda\in M_n(\mathbb C)\otimes M_n(\mathbb C)$ is the square matrix associated to $\varphi:M_n(\mathbb C)\to M_n(\mathbb C)$.
\end{proposition}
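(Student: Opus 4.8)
The plan is to compute the $*$-moments $M_p^e(m\tilde W)$ directly by expanding everything in terms of the Gaussian entries of $G$, and then to apply the Wick formula, organizing the resulting sum by the pairing of Gaussian variables. First I would write $\tilde W_{ia,jb}=\sum_{cd}\Lambda_{ca,db}W_{ic,jd}$ as given after Definition 1.5, and then substitute $W=\frac{1}{dm}GG^*$, so that $W_{ic,jd}=\frac{1}{dm}\sum_\alpha G_{ic,\alpha}\overline{G_{jd,\alpha}}$. Plugging this into the defining formula for $M_p^e(m\tilde W)=\frac{1}{dn}\mathbb E\sum(m\tilde W)^{e_1}_{\ldots}\cdots(m\tilde W)^{e_p}_{\ldots}$, taking care to track how the exponent $e_k\in\{1,*\}$ transposes/conjugates the relevant index pair and complex-conjugates $\Lambda$, one gets a big sum over all the internal indices (the $d$-type indices $i_1,\ldots,i_p$, the $n$-type indices attached to $\Lambda$, and the $dm$-type summation indices $\alpha_1,\ldots,\alpha_p$ coming from $GG^*$), with an expectation of a product of $2p$ Gaussian variables out front. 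The factor $m$ in $m\tilde W$ is chosen precisely to cancel the $\frac{1}{m}$ from each $W$, leaving clean powers of $d$ and $n$.

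Next I would apply the Wick/isserlis formula to $\mathbb E$ of the product of $G$'s and $\bar G$'s: this expands as a sum over pairings, and since the entries are $\mathcal N(0,1)$ each surviving pairing matches one $G$ with one $\bar G$ and forces the two corresponding $dn$-type row indices to agree and the two $dm$-type column indices to agree. Such a pairing is exactly a permutation $\sigma\in S_p$ recording which $\bar G$ factor is paired to which $G$ factor. The Kronecker deltas produced split into two independent parts: one system of deltas on the $d$-type indices $i_1,\ldots,i_p$ together with the cycle structure coming from how the $W$'s are chained in the trace (this is where the standard cycle $\gamma$ enters, through $M_\gamma^e$), and another system of deltas on the $n$-type indices, which assemble precisely into $(M_\sigma^e\otimes M_\gamma^e)(\Lambda)$ using the generalized $*$-moment formula displayed before Proposition 1.6. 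Counting the free index loops: the $d$-type indices contribute a power $d^{\#\text{loops of }\sigma\gamma^{-1}\text{-type}}$, the $n$-type indices contribute, and the $\alpha$-indices contribute another power of $dm$; book-keeping the normalizations $\frac{1}{dn}$, $\frac{1}{(dm)^p}$, and the $\frac{1}{n^{|\sigma|}}$-type factors hidden in the $M$'s, the total power of $d$ is maximized exactly when the pair $(\sigma,\gamma)$ is ``noncrossing'' in the planar sense, i.e. when $\sigma\in NC_p$ under the embedding $NC_p\subset S_p$ described in the excerpt (the genus-zero condition $|\sigma|+|\sigma\gamma^{-1}|=p+1$).

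Finally I would take the $d\to\infty$ limit: the terms with $\sigma\notin NC_p$ are suppressed by negative powers of $d$ and vanish, while each $\sigma\in NC_p$ survives with coefficient exactly $(mn)^{|\sigma|}$ times $(M_\sigma^e\otimes M_\gamma^e)(\Lambda)$, after the remaining powers of $n$ are absorbed into the definition of the generalized moments. This yields the claimed formula. The main obstacle I anticipate is the careful power-counting in $d$: one must verify that the exponent of $d$ equals $|\sigma|+|\sigma\gamma^{-1}|-p-1$ (so $\leq 0$, with equality iff $\sigma$ is noncrossing), which requires correctly identifying the three families of index loops and checking that the $\alpha$-loops and $i$-loops interact through $\sigma$ in the expected way; a secondary nuisance is tracking the exponents $e_k\in\{1,*\}$ consistently, since $W$ itself is self-adjoint but $\tilde W$ is not, so the $*$'s act only through $\varphi$, i.e. only on $\Lambda$, which is why they end up attached to the $\Lambda$-factors in $(M_\sigma^e\otimes M_\gamma^e)(\Lambda)$ and not to the $\gamma$-side loop structure. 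This is essentially the computation of \cite{bn2} for ordinary moments, upgraded to $*$-moments by carrying the $e$-decoration through the Wick expansion.
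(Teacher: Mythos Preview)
Your proposal is correct and follows essentially the same route as the paper: expand $\tilde W^{e_s}$ via $\Lambda^{e_s}$ and $W$, substitute $W=\frac{1}{dm}GG^*$, apply the Wick formula to obtain a sum over $\sigma\in S_p$, count the resulting index loops to get the $d$-exponent $|\sigma|+|\sigma\gamma^{-1}|-p-1$, and invoke Biane's characterization to restrict to $\sigma\in NC_p$ in the limit. Your observation that the $*$'s act only on $\Lambda$ (because $W$ is self-adjoint) is exactly the key point the paper uses to write $\tilde W^e_{ia,jb}=\sum_{cd}\Lambda^e_{ca,db}W_{ic,jd}$ uniformly in $e$; the minor cross-reference slips (the entry formula appears after Definition~1.3, and the generalized $*$-moment formula before this very proposition) do not affect the argument.
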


\begin{proof}
We use the formula for the matrix entries of $\tilde{W}$, given after Definition 1.3 above. As a first observation, the entries of the adjoint matrix $\tilde{W}^*$ are given by:
$$\tilde{W}_{ia,jb}^*=\sum_{cd}\bar{\Lambda}_{db,ca}\bar{W}_{jd,ic}=\sum_{cd}\Lambda^*_{ca,db}W_{ic,jd}$$

Thus, we have the following global formula, valid for any exponent $e\in\{1,*\}$:
$$\tilde{W}_{ia,jb}^e=\sum_{cd}\Lambda^e_{ca,db}W_{ic,jd}$$

In order to compute the $*$-moments of $\tilde{W}$, observe first that we have:
\begin{eqnarray*}
tr(\tilde{W}^{e_1}\ldots\tilde{W}^{e_p})
&=&(dn)^{-1}\sum_{i_ra_r}\prod_s\tilde{W}_{i_sa_s,i_{s+1}a_{s+1}}^{e_s}\\
&=&(dn)^{-1}\sum_{i_ra_rc_rd_r}\prod_s\Lambda_{c_sa_s,d_sa_{s+1}}^{e_s}W_{i_sc_s,i_{s+1}d_s}\\
&=&(dn)^{-1}(dm)^{-p}\sum_{i_ra_rc_rd_rj_rb_r}\prod_s\Lambda_{c_sa_s,d_sa_{s+1}}^{e_s}G_{i_sc_s,j_sb_s}\bar{G}_{i_{s+1}d_s,j_sb_s}
\end{eqnarray*}

The average of the general term can be computed by the Wick rule:
$$\mathbb E\left(\prod_sG_{i_sc_s,j_sb_s}\bar{G}_{i_{s+1}d_s,j_sb_s}\right)
=\#\left\{\sigma\in S_p\Big|i_{\sigma(s)}=i_{s+1},c_{\sigma(s)}=d_s,j_{\sigma(s)}=j_s,b_{\sigma(s)}=b_s\right\}$$

Let us look now at the above sum. The $i,j,b$ indices range over sets having respectively $d,d,m$ elements, and they have to be constant under the action of $\sigma\gamma^{-1},\sigma,\sigma$. Thus when summing over these $i,j,b$ indices we simply obtain a $d^{|\sigma\gamma^{-1}|}d^{|\sigma|}m^{|\sigma|}$ factor, so we get:
\begin{eqnarray*}
(\mathbb E\circ tr)(\tilde{W}^{e_1}\ldots\tilde{W}^{e_p})
&=&(dn)^{-1}(dm)^{-p}\sum_{\sigma\in S_p}d^{|\sigma\gamma^{-1}|}(dm)^{|\sigma|}\sum_{a_rc_r}\prod_s\Lambda_{c_sa_s,c_{\sigma(s)}a_{s+1}}^{e_s}\\
&=&m^{-p}\sum_{\sigma\in S_p}d^{|\sigma|+|\sigma\gamma^{-1}|-p-1}(mn)^{|\sigma|}(M_\sigma^e\otimes M_\gamma^e)(\Lambda)
\end{eqnarray*}

We use now the standard fact, from \cite{bia}, that for $\sigma\in S_p$ we have $|\sigma|+|\sigma\gamma^{-1}|\leq p+1$, with equality precisely when $\sigma\in NC_p$. Thus with $d\to\infty$ the sum restricts over the partitions $\sigma\in NC_p$, and this gives the formula in the statement.
\end{proof}

In order to exploit the above formula, we will need some basic facts from free probability theory, namely the general formalism of the $*$-distributions, the free convolution operation $\boxplus$, and the notion of free cumulant. We refer to \cite{nsp}, \cite{vdn} for this material.

We will need in particular the following well-known result:

\begin{proposition}
Given a positive measure $\mu$ on $\mathbb C$, having mass $c>0$, we can define a $*$-distribution by the following Poisson convergence type formula:
$$\pi_\mu=\lim_{n\to\infty}\left(\left(1-\frac{c}{n}\right)\delta_0+\frac{1}{n}\mu\right)^{\boxplus n}$$
This $*$-distribution, called compound free Poisson law, has as free $*$-cumulants the $*$-moments of $\mu$. Moreover, when $\mu=\sum_ic_i\delta_{z_i}$ with $c_i>0$ and $z_i\in\mathbb C$, we have
$$\pi_\mu={\rm law}\left(\sum_iz_i\alpha_i\right)$$
where the variables $\alpha_i$ are free Poisson of parameter $c_i$, taken to be free.
\end{proposition}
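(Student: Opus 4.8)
The plan is to reduce everything to the fact that free $*$-cumulants linearize free convolution: writing $\kappa^e$ for the free $*$-cumulant of order $p$ attached to a sequence of exponents $e=(e_1,\ldots,e_p)$, one has $\kappa^e(\rho^{\boxplus n})=n\,\kappa^e(\rho)$ for any $*$-distribution $\rho$. So the first assertion comes down to analysing $\kappa^e$ of the measure $\nu_n=(1-c/n)\delta_0+\frac1n\mu$, which is a probability measure of total mass $1$.

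First I would note that $\delta_0$ contributes nothing in degree $\geq 1$, so $M^{e'}(\nu_n)=\frac1n M^{e'}(\mu)$ for every exponent sequence $e'$ of length $\geq 1$. Expressing $\kappa^e(\nu_n)$ through the moments $M^{e'}(\nu_n)$ by Möbius inversion over the lattice $NC_p$ \cite{nsp}, one obtains a sum over $\sigma\in NC_p$ of products, indexed by the blocks of $\sigma$, of moments of $\nu_n$, with Möbius weights. The term $\sigma=1_p$ is a single moment $M^e(\nu_n)=\frac1n M^e(\mu)$; every other $\sigma$ has at least two blocks and hence contributes a product of at least two moments of $\nu_n$, which is $O(1/n^2)$. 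Therefore $n\,\kappa^e(\nu_n)=M^e(\mu)+O(1/n)$, so $\kappa^e(\nu_n^{\boxplus n})=n\,\kappa^e(\nu_n)\to M^e(\mu)$. Since $*$-moments are in turn polynomial in $*$-cumulants (again Möbius over $NC$), the $*$-moments of $\nu_n^{\boxplus n}$ converge too; this proves that the limit $\pi_\mu$ exists as a $*$-distribution, with free $*$-cumulants equal to the $*$-moments of $\mu$.

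For the second assertion I would first treat a single atom. A free Poisson variable $\alpha$ of parameter $c$ is self-adjoint, with free cumulants $\kappa_p(\alpha)=c$ for all $p\geq 1$ -- this is exactly the cumulant description of $\pi_{c\delta_1}$ -- so multilinearity of cumulants gives, with the convention $z^1=z$ and $z^*=\bar z$,
$$\kappa^{e_1\ldots e_p}(z\alpha)=z^{\#\{s\,:\,e_s=1\}}\,\bar{z}^{\,\#\{s\,:\,e_s=*\}}\,\kappa_p(\alpha)=c\,z^{e_1}\cdots z^{e_p}=M^e(c\delta_z).$$
Hence $z\alpha$ and $\pi_{c\delta_z}$ have the same free $*$-cumulants, so they coincide. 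For $\mu=\sum_i c_i\delta_{z_i}$, choose the $\alpha_i$ free; then $\big(\sum_i z_i\alpha_i\big)^e$ expands multilinearly over the $(z_i\alpha_i)^e$, and since mixed free $*$-cumulants of free variables vanish (Speicher's criterion \cite{nsp}), $\kappa^e\big(\sum_i z_i\alpha_i\big)=\sum_i\kappa^e(z_i\alpha_i)=\sum_i M^e(c_i\delta_{z_i})=M^e(\mu)$, which by the first part are precisely the free $*$-cumulants of $\pi_\mu$. Thus $\pi_\mu=\mathrm{law}\big(\sum_i z_i\alpha_i\big)$.

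The Möbius bookkeeping and the cumulant-scaling identity are routine; the one genuinely delicate point is the assertion that the Poisson-type limit \emph{exists} -- and, if one insists on $\pi_\mu$ being a bona fide law rather than merely a collection of $*$-moments, is \emph{positive} -- for an arbitrary positive measure $\mu$ with finite moments, not just a finitely supported one. I would deal with this by observing that the computation above is purely formal, so the limit always exists at the level of $*$-distributions; positivity and the realisation as an honest operator can then be transported from the atomic case by weak approximation, or simply quoted from the standard references on compound free Poisson laws. As $\mu$ is finitely supported in all our applications, this point will not affect anything in the sequel.
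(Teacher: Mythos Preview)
Your proof is correct and follows the same route the paper indicates: the paper's own proof is essentially a one-line reference to Speicher's moment--cumulant formula and to \cite{bn2}, \cite{nsp}, \cite{spe}, and what you have written is precisely the standard execution of that argument, carried out explicitly in the $*$-moment setting. There is no substantive difference in approach, only in the level of detail.
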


\begin{proof}
All these assertions are well-known in the real case, and explained in detail in \cite{bn2}, and the proof in the general complex case is similar. To be more precise, all the assertions follow by using Speicher's moment-cumulant formula. See \cite{nsp}, \cite{spe}.
\end{proof}

Now back to the Wishart matrices, the trick, from \cite{bn2}, is that of constructing a compound free Poisson law, by using the law of $\Lambda$. To be more precise, we have:

\begin{proposition}
Given a square matrix $\Lambda\in M_n(\mathbb C)\otimes M_n(\mathbb C)$, having $*$-distribution $\rho=law(\Lambda)$, the $*$-moments of the compound free Poisson law $\pi_{mn\rho}$ are given by
$$M_p^e(\pi_{mn\rho})=\sum_{\sigma\in NC_p}(mn)^{|\sigma|}(M_\sigma^e\otimes M_\sigma^e)(\Lambda)$$
for any choice of the extra parameter $m\in\mathbb N$.
\end{proposition}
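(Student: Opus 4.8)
The plan is to compute the $*$-moments of $\pi_{mn\rho}$ from its free $*$-cumulants, and then to reorganize the resulting sum over noncrossing partitions into the form appearing on the right-hand side. To begin with, Proposition 1.5 — in its straightforward extension from positive measures on $\mathbb C$ to arbitrary compactly supported $*$-distributions, which is the form in which it is used in \cite{bn2} — tells us that the free $*$-cumulants of a compound free Poisson law $\pi_\mu$ are the $*$-moments of $\mu$. Applying this with $\mu=mn\rho$, and using that $*$-moments scale linearly in $\mu$, the free $*$-cumulant of order $k$ of $\pi_{mn\rho}$ associated to a sequence of exponents $f\in\{1,*\}^k$ equals $mn\cdot M_k^f(\rho)$.

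Next I would invoke Speicher's $*$-moment--$*$-cumulant formula, valid for any $*$-distribution, namely $M_p^e=\sum_{\sigma\in NC_p}\prod_{b\in\sigma}\kappa^{e|_b}$, where for a block $b=\{b_1<\ldots<b_k\}$ of $\sigma$ the factor $\kappa^{e|_b}$ is the free $*$-cumulant of order $k$ carrying the exponents $(e_{b_1},\ldots,e_{b_k})$. Substituting the values of the cumulants found in the previous step, this gives
$$M_p^e(\pi_{mn\rho})=\sum_{\sigma\in NC_p}(mn)^{|\sigma|}\prod_{b\in\sigma}M_{|b|}^{e|_b}(\rho).$$
It therefore remains to establish, for each $\sigma\in NC_p$, the identity $\prod_{b\in\sigma}M_{|b|}^{e|_b}(\rho)=(M_\sigma^e\otimes M_\sigma^e)(\Lambda)$, with $\sigma$ on the right-hand side understood via the embedding $NC_p\subset S_p$ described in Section 1.

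This last point is where the actual work sits, and it should follow from a direct inspection of the formula for $(M_\sigma^e\otimes M_\tau^e)(\Lambda)$ recorded after Definition 1.3. Merging the two index families $(i_s)$ and $(j_s)$ into a single family of indices ranging over $\{1,\ldots,n^2\}$, one sees that $(M_\sigma^e\otimes M_\sigma^e)(\Lambda)$ coincides with $M_\sigma^e(\Lambda)$ when $\Lambda$ is viewed as an $n^2\times n^2$ matrix, the exponent $*$ meaning the adjoint in that sense — a convention consistent with the computation of $\tilde{W}^*$ in the proof of Proposition 1.4. The sum defining $M_\sigma^e(\Lambda)$ then factors as a product over the cycles of $\sigma$, which under the embedding $NC_p\subset S_p$ are precisely the blocks of $\sigma$; the cycle attached to a block $b=\{b_1<\ldots<b_k\}$ contributes $Tr(\Lambda^{e_{b_1}}\cdots\Lambda^{e_{b_k}})$, and after absorbing the normalizing factor $n^{-2|\sigma|}$ this becomes $tr(\Lambda^{e_{b_1}}\cdots\Lambda^{e_{b_k}})=M_{|b|}^{e|_b}(\rho)$, by the very definition $\rho=law(\Lambda)$. (Alternatively one may simply quote the cycle decomposition of the generalized $*$-moments from \cite{nsp}.) Combining the three steps yields the stated formula, for any $m\in\mathbb N$. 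The main obstacle is precisely this last step: it is conceptually routine but requires some care in the index bookkeeping, in particular regarding the behaviour of the exponents $e$ under the merging of the two tensor legs, and the matching of the cycle $(b_1\to\cdots\to b_k)$ of a block with the ordered product $\Lambda^{e_{b_1}}\cdots\Lambda^{e_{b_k}}$.
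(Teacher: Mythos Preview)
Your proposal is correct and follows essentially the same approach as the paper: identify the free $*$-cumulants of $\pi_{mn\rho}$ via Proposition 1.5 as $mn\cdot M_p^e(\Lambda)=mn\cdot(M_\gamma^e\otimes M_\gamma^e)(\Lambda)$, and then apply Speicher's moment--cumulant formula. The paper's proof is terser because it takes for granted the cycle-factorization identity $\prod_{b\in\sigma}M_{|b|}^{e|_b}(\rho)=(M_\sigma^e\otimes M_\sigma^e)(\Lambda)$, having already noted after Definition~1.2 that generalized $*$-moments decompose over cycles; you spell this out explicitly, which is fine.
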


\begin{proof}
We know from Proposition 1.5 above that the free $*$-cumulants of $\pi_{mn\rho}$ are the $*$-moments of $mn\rho$. Thus, these free $*$-cumulants are given by:
$$\kappa_p^e(\pi_{mn\rho})=M_p^e(mn\rho)=mn\cdot M_p^e(\Lambda)=mn\cdot (M_\gamma^e\otimes M_\gamma^e)(\Lambda)$$

By using now Speicher's moment-cumulant formula \cite{nsp}, this gives the result.
\end{proof}

We can see now an obvious similarity with the formula in Proposition 1.4. In order to exploit this similarity, once again by following \cite{bn2}, let us introduce:

\begin{definition}
We call a square matrix $\Lambda\in M_n(\mathbb C)\otimes M_n(\mathbb C)$ multiplicative when
$$(M_\sigma^e\otimes M_\gamma^e)(\Lambda)=(M_\sigma^e\otimes M_\sigma^e)(\Lambda)$$
holds for any $p\in\mathbb N$, any exponents $e_1,\ldots,e_p\in\{1,*\}$, and any $\sigma\in NC_p$.
\end{definition}

With this above notion in hand, we can now formulate an asymptotic $*$-distribution result regarding the block-modified Wishart matrices, as follows:

\begin{theorem}
Consider a block-modified Wishart matrix $\tilde{W}=(id\otimes\varphi)W$, and assume that the matrix $\Lambda\in M_n(\mathbb C)\otimes M_n(\mathbb C)$ associated to $\varphi$ is multiplicative. Then
$$m\tilde{W}\sim\pi_{mn\rho}$$
holds, in $*$-moments, in the $d\to\infty$ limit, where $\rho=law(\Lambda)$.
\end{theorem}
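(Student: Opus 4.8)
The plan is to simply chain together the three previous results, since Definition 1.7 has been set up precisely so that this works. First I would invoke Proposition 1.4, which gives, with no hypothesis on $\varphi$ beyond linearity,
$$\lim_{d\to\infty}M_p^e(m\tilde{W})=\sum_{\sigma\in NC_p}(mn)^{|\sigma|}(M_\sigma^e\otimes M_\gamma^e)(\Lambda)$$
for any $p\in\mathbb N$ and any exponents $e_1,\ldots,e_p\in\{1,*\}$. This is the technical heart of the matter, but it has already been established via the Wick formula and the genus expansion $|\sigma|+|\sigma\gamma^{-1}|\leq p+1$, so nothing further is needed there.

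Next I would feed in the multiplicativity hypothesis. By Definition 1.7, since $\Lambda$ is multiplicative, we have $(M_\sigma^e\otimes M_\gamma^e)(\Lambda)=(M_\sigma^e\otimes M_\sigma^e)(\Lambda)$ for every $\sigma\in NC_p$, every $p$, and every choice of exponents. Substituting this identity term by term into the sum above yields
$$\lim_{d\to\infty}M_p^e(m\tilde{W})=\sum_{\sigma\in NC_p}(mn)^{|\sigma|}(M_\sigma^e\otimes M_\sigma^e)(\Lambda).$$
Then, applying Proposition 1.6 to $\rho={\rm law}(\Lambda)$ with the same auxiliary parameter $m$, the right-hand side is exactly $M_p^e(\pi_{mn\rho})$. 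Hence $\lim_{d\to\infty}M_p^e(m\tilde{W})=M_p^e(\pi_{mn\rho})$ for all $p$ and all exponent sequences $e$, which is by definition the assertion that $m\tilde{W}\sim\pi_{mn\rho}$ in $*$-moments, in the $d\to\infty$ limit.

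I do not expect any genuine obstacle in this argument: the entire content has been absorbed into Propositions 1.4 and 1.6, while Definition 1.7 is the precise bridge between the ``$\gamma$ on the right'' of the matrix side and the ``$\sigma$ on the right'' of the free probability side. The only point worth keeping in mind is that the conclusion is a statement about $*$-moments only, so there is no need for any tightness or operator-norm estimate in order to upgrade it to weak convergence of the eigenvalue distribution — the theorem asserts exactly the moment-level identity. Should one wish to make such an upgrade, it would suffice to note that $\pi_{mn\rho}$ is compactly supported, being compound free Poisson, together with a uniform bound on $\|\tilde{W}\|$, but this lies outside the present statement.
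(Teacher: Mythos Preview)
Your argument is correct and matches the paper's own proof essentially line for line: invoke Proposition~1.4, use the multiplicativity hypothesis of Definition~1.7 to replace $(M_\sigma^e\otimes M_\gamma^e)(\Lambda)$ by $(M_\sigma^e\otimes M_\sigma^e)(\Lambda)$ term by term, and then identify the resulting sum with $M_p^e(\pi_{mn\rho})$ via Proposition~1.6. The paper phrases it as the equality of the two sums over $NC_p$, but the content is identical.
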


\begin{proof}
By comparing the $*$-moment formulae in Proposition 1.4 and in Proposition 1.6, we conclude that the asymptotic formula $m\tilde{W}\sim\pi_{mn\rho}$ is equivalent to the following equality, which should hold for any $p\in\mathbb N$, and any exponents $e_1,\ldots,e_p\in\{1,*\}$:
$$\sum_{\sigma\in NC_p}(mn)^{|\sigma|}(M_\sigma^e\otimes M_\gamma^e)(\Lambda)=\sum_{\sigma\in NC_p}(mn)^{|\sigma|}(M_\sigma^e\otimes M_\sigma^e)(\Lambda)$$

Now by assuming that $\Lambda$ is multiplicative, in the sense of Definition 1.7 above, these two sums are trivially equal, and this gives the result.
\end{proof}

Summarizing, we have now $*$-moment extensions of the basic results from \cite{bn2}. As explained in \cite{bn2}, and then in \cite{anv}, it is possible to use weaker versions of the multiplicativity condition in Definition 1.7, in order to obtain some more specialized results, in the spirit of Theorem 1.8. In what follows we will use Theorem 1.8 as it is.

\section{Easiness, examples}

In this section and in the next ones we work out some explicit consequences of Theorem 1.8, by using some special classes of modification maps $\varphi:M_n(\mathbb C)\to M_n(\mathbb C)$.

Let us begin with the following standard definition:

\begin{definition}
Let $P(k,l)$ be the set of partitions between an upper row of $k$ points, and a lower row of $l$ points. Associated to any $\pi\in P(k,l)$ is the linear map
$$T_\pi(e_{i_1}\otimes\ldots\otimes e_{i_k})=\sum_{j_1\ldots j_l}\delta_\pi\begin{pmatrix}i_1&\ldots&i_k\\ j_1&\ldots&j_l\end{pmatrix}e_{j_1}\otimes\ldots\otimes e_{j_l}$$
between tensor powers of $\mathbb C^N$, called ``easy'', with the Kronecker type symbol on the right being given by $\delta_\pi=1$ when the indices fit, and $\delta_\pi=0$ otherwise.
\end{definition}

Here $e_1,\ldots,e_N$ is as usual the standard basis of $\mathbb C^N$, with $N\in\mathbb N$ being arbitrary, and the convention is that the indices fit when any block of $\pi$ contains equal indices.

The above maps are well-known in representation theory, the result being that if we denote by $u$ the fundamental representation of the symmetric group $S_N$, we have:
$$Hom(u^{\otimes k},u^{\otimes l})=span\left(T_\pi\Big|\pi\in P(k,l)\right)$$

In what follows we will only need easy maps coming from partitions having even blocks. The representation theory result here, that we will not really need, is as follows:

\begin{proposition}
Consider the hyperoctahedral group $H_N=\mathbb Z_2\wr S_N$, with fundamental representation $v$, coming from the standard action $H_N\curvearrowright \mathbb R^N$. We have then
$$Hom(v^{\otimes k},v^{\otimes l})=span\left(T_\pi\Big|\pi\in P_{even}(k,l)\right)$$
where $P_{even}(k,l)\subset P(k,l)$ is the subset of partitions having blocks of even size.
\end{proposition}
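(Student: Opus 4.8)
The plan is to compute $Hom(v^{\otimes k},v^{\otimes l})$ directly, by reducing it to a fixed-vector computation and then splitting along the semidirect product decomposition $H_N=\mathbb Z_2\wr S_N=\mathbb Z_2^N\rtimes S_N$. Since $v$ is the standard orthogonal, hence self-dual, representation, the Frobenius (rotation) isomorphism identifies $Hom(v^{\otimes k},v^{\otimes l})$ with the fixed-vector space $Fix(v^{\otimes(k+l)})$, in such a way that the easy map $T_\pi$ attached to $\pi\in P(k,l)$ corresponds to the orbit-sum vector $\xi_{\pi'}=\sum_{\ker(i)\geq\pi'}e_{i_1}\otimes\cdots\otimes e_{i_{k+l}}$, where $\pi'\in P(k+l)$ is the one-row partition obtained from $\pi$ by straightening the two rows into one. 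Rotation is an evenness-preserving bijection $P(k,l)\to P(k+l)$, so the proposition reduces to showing that $Fix(v^{\otimes s})=span(\xi_\sigma\,|\,\sigma\in P_{even}(s))$, for every $s\in\mathbb N$ and every $N$.

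To prove this I would use $Fix_{H_N}(v^{\otimes s})=Fix_{S_N}(v^{\otimes s})\cap Fix_{\mathbb Z_2^N}(v^{\otimes s})$. The $S_N$-part is classical: $S_N$ permutes the monomial basis $\{e_{i_1}\otimes\cdots\otimes e_{i_s}\}$ with orbits labelled by the kernel partitions, whence $Fix_{S_N}(v^{\otimes s})=span(\xi_\sigma\,|\,\sigma\in P(s))$, the nonzero $\xi_\sigma$ forming an honest basis (those with $|\sigma|>N$ being $0$, harmlessly). Now intersect with $Fix_{\mathbb Z_2^N}$: a diagonal sign matrix $D_\varepsilon=\mathrm{diag}(\varepsilon_1,\ldots,\varepsilon_N)$ acts on the monomial $e_{i_1}\otimes\cdots\otimes e_{i_s}$ by the scalar $\varepsilon_{i_1}\cdots\varepsilon_{i_s}=\prod_t\varepsilon_{a_t}^{\mu_t}$, where $a_t$ are the distinct values appearing in $(i_1,\ldots,i_s)$ and $\mu_t$ their multiplicities, so that the blocks of $\ker(i)$ have sizes $\mu_t$. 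Writing a generic $\eta=\sum_\sigma c_\sigma\xi_\sigma\in Fix_{S_N}$ and reading off coefficients in the monomial basis, the requirement $D_\varepsilon\eta=\eta$ for all $\varepsilon$ forces $\prod_t\varepsilon_{a_t}^{\mu_t}=1$ for all $\varepsilon\in\{-1,1\}^N$ whenever $c_{\ker(i)}\neq0$, which happens precisely when all $\mu_t$ are even, i.e. when $\ker(i)\in P_{even}(s)$. Hence $c_\sigma=0$ for $\sigma\notin P_{even}(s)$, and conversely any such $\eta$ is visibly $\mathbb Z_2^N$-invariant, giving $Fix_{H_N}(v^{\otimes s})=span(\xi_\sigma\,|\,\sigma\in P_{even}(s))$.

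Unwinding the Frobenius isomorphism then yields $Hom(v^{\otimes k},v^{\otimes l})=span(T_\pi\,|\,\pi\in P_{even}(k,l))$; a short Möbius-inversion remark (passing between the ``$\ker\geq\sigma$'' and ``$\ker=\sigma$'' orbit sums by a triangular change of basis, which stays inside $P_{even}$) shows the span is insensitive to the precise normalization of $T_\pi$ in Definition 2.1. I expect the only point needing genuine care to be the bookkeeping in the middle step: $D_\varepsilon$ does \emph{not} act as a scalar on the vectors $\xi_\sigma$ --- it mixes them --- so one cannot diagonalize the $\mathbb Z_2^N$-action on $Fix_{S_N}$ directly, and must instead compare coefficients in the monomial basis, after which the even-block condition drops out of a one-line parity computation, uniformly in $N$. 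An alternative for the inclusion ``$\supseteq$'' would be to check that each $T_\pi$, $\pi\in P_{even}(k,l)$, intertwines the generating signed permutation matrices of $H_N$ and then to match dimensions through the character integral $\int_{H_N}\chi_v^{k+l}$; but that route yields equality painlessly only for $N$ large, whereas the fixed-vector computation above is uniform in $N$.
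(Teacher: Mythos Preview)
Your approach is the paper's, only fleshed out: the paper simply notes that $S_N\subset H_N$ forces $Hom(v^{\otimes k},v^{\otimes l})\subset span(T_\pi\mid\pi\in P(k,l))$, and that the $\mathbb Z_2$ sign-flips then cut down to $P_{even}$, referring to \cite{bbc} for details. Your Frobenius reduction to $Fix(v^{\otimes s})$ and the splitting $Fix_{H_N}=Fix_{S_N}\cap Fix_{\mathbb Z_2^N}$ are exactly the right way to supply those details.

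One bookkeeping slip is worth fixing. With $\xi_\sigma=\sum_{\ker(i)\geq\sigma}e_i$ and $\eta=\sum_\sigma c_\sigma\xi_\sigma$, the coefficient of the monomial $e_i$ is $\sum_{\sigma\leq\ker(i)}c_\sigma$, not $c_{\ker(i)}$; so $\mathbb Z_2^N$-invariance only yields $\sum_{\sigma\leq\tau}c_\sigma=0$ for every $\tau\notin P_{even}$, which does not literally force $c_\sigma=0$ for $\sigma\notin P_{even}$. The clean route---which you already gesture at---is to run the middle step in the genuine $S_N$-orbit basis $\tilde\xi_\tau=\sum_{\ker(i)=\tau}e_i$, where each monomial coefficient really is $\tilde c_{\ker(i)}$ and the parity argument gives $\tilde c_\tau=0$ for $\tau\notin P_{even}$ on the nose. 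Your M\"obius-inversion remark then converts $span(\tilde\xi_\tau\mid\tau\in P_{even})$ into $span(\xi_\sigma\mid\sigma\in P_{even})$, because coarsenings of even-block partitions remain even-block, and unwinding Frobenius finishes the job.
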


\begin{proof}
Since we have an inclusion $S_N\subset H_N$, when looking at the corresponding invariants we obtain inclusions $Hom(v^{\otimes k},v^{\otimes l})\subset Hom(u^{\otimes k},u^{\otimes l})$, for any $k,l\in\mathbb N$. With this observation in hand, the result follows from the result for $S_N$, the idea being that the sign switches coming from $\mathbb Z_2$ must ``group together'', and so ultimately correspond to restricting the attention to the partitions belonging to $P_{even}(k,l)\subset P(k,l)$. See \cite{bbc}.
\end{proof}

As a conclusion to all this, the general idea is that, at least for certain representation-theoretic purposes, the maps $T_\pi$ from Definition 2.1 above are indeed ``easy''. For full details regarding the notion of easiness, in this setting, we refer to \cite{bsp}, \cite{bra}, \cite{rwe}, \cite{twe}.

Now back to our questions, the idea here, which goes back to the work in \cite{bn2}, is that the same conclusion applies to the block-modified Wishart problematics. To be more precise, the ``easy'' maps for this theory are, once again, those in Definition 2.1 above.

In order to explain this phenomenon, let us begin with:

\begin{definition}
Associated to any partition $\pi\in P(2s,2s)$ is the linear map
$$\varphi_\pi(e_{a_1\ldots a_s,c_1\ldots c_s})=\sum_{b_1\ldots b_s}\sum_{d_1\ldots d_s}\delta_\pi\begin{pmatrix}a_1&\ldots&a_s&c_1&\ldots&c_s\\ b_1&\ldots&b_s&d_1&\ldots&d_s\end{pmatrix}e_{b_1\ldots b_s,d_1\ldots d_s}$$
obtained from $T_\pi$ by contracting all the tensors, via $e_{i_1}\otimes\ldots\otimes e_{i_{2s}}\to e_{i_1\ldots i_s,i_{s+1}\ldots i_{2s}}$.
\end{definition}

Here, as in Definition 2.1 above, $\{e_1,\ldots,e_N\}$ is the standard basis of $\mathbb C^N$, with $N\in\mathbb N$ being some fixed integer, and $\{e_{ij}\}$ is the corresponding basis of $M_N(\mathbb C)$. Thus, the above linear map $\varphi_\pi$ can be viewed as a ``block-modification'' map, as follows:
$$\varphi_\pi:M_{N^s}(\mathbb C)\to M_{N^s}(\mathbb C)$$

In order to verify that the corresponding matrices $\Lambda_\pi$ are multiplicative, we will need to check that all the functions $\varphi(\sigma,\tau)=(M_\sigma^e\otimes M_\tau^e)(\Lambda_\pi)$ have the property $\varphi(\sigma,\gamma)=\varphi(\sigma,\sigma)$. For this purpose, we can use the following result, coming from \cite{bn2}:

\begin{proposition}
The following functions $\varphi:NC_p\times NC_p\to\mathbb R$ are ``multiplicative'', in the sense that they satisfy the condition $\varphi(\sigma,\gamma)=\varphi(\sigma,\sigma)$:
\begin{enumerate}
\item $\varphi(\sigma,\tau)=|\sigma\tau^{-1}|-|\tau|$.

\item $\varphi(\sigma,\tau)=|\sigma\tau|-|\tau|$.

\item $\varphi(\sigma,\tau)=|\sigma\wedge\tau|-|\tau|$.
\end{enumerate}
\end{proposition}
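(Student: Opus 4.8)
The plan is to verify the identity $\varphi(\sigma,\gamma)=\varphi(\sigma,\sigma)$ separately for the three functions, the first being a reformulation of Biane's geodesic lemma, the third being essentially trivial, and only the second requiring a genuine computation.

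For (1), the identity reads $|\sigma\gamma^{-1}|-|\gamma|=|\sigma\sigma^{-1}|-|\sigma|$, that is $|\sigma\gamma^{-1}|-1=p-|\sigma|$, or equivalently $|\sigma|+|\sigma\gamma^{-1}|=p+1$. This is precisely the equality case of the inequality $|\sigma|+|\sigma\gamma^{-1}|\leq p+1$ from \cite{bia} already invoked in the proof of Proposition 1.4, which holds exactly when $\sigma\in NC_p$; so (1) is immediate. For (3), we have $\varphi(\sigma,\sigma)=|\sigma|-|\sigma|=0$, while $\gamma\in NC_p$ is the one-block partition, for which $\sigma\wedge\gamma=\gamma$ for every $\sigma$, so that $\varphi(\sigma,\gamma)=|\gamma|-|\gamma|=0$ as well; hence (3) holds.

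The only statement needing work is (2), where the identity reads $|\sigma\gamma|-1=|\sigma^2|-|\sigma|$. The right-hand side is elementary: squaring a permutation leaves each odd-length cycle intact and splits each even-length cycle in two, so $|\sigma^2|-|\sigma|$ equals the number $e(\sigma)$ of even-length cycles of $\sigma$, and it suffices to prove $|\sigma\gamma|=1+e(\sigma)$ for all $\sigma\in NC_p$. I would argue by induction on $p$. Both sides are unchanged under a cyclic rotation $\sigma\mapsto\gamma\sigma\gamma^{-1}$ — the left side because $\sigma\gamma$ is conjugate to $\gamma\sigma=(\gamma\sigma\gamma^{-1})\gamma$, the right side because conjugation preserves cycle type — and $NC_p$ is rotation-invariant, so we may assume $\sigma$ has an interval block $B=\{1,\dots,b\}$ on which it acts as the cycle $(1\,2\,\cdots\,b)$; such a $B$ exists whenever $\sigma$ has more than one block, and the one-block case $\sigma=\gamma$ is handled directly via $|\gamma^2|=1+e(\gamma)$. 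Deleting $B$ yields $\hat\sigma\in NC_{p-b}$ with $e(\hat\sigma)=e(\sigma)-1$ for $b$ even and $e(\hat\sigma)=e(\sigma)$ for $b$ odd; the key point is that passing from $\sigma\gamma$ to $\hat\sigma\hat\gamma$, where $\hat\gamma$ is the full cycle on the remaining $p-b$ points, changes the number of cycles by exactly the same amount, after which the induction hypothesis finishes the proof. The base case $p=1$ is trivial.

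The main obstacle is that last cycle-counting claim inside (2): one must follow carefully how the two edges of $\gamma$ meeting the ends of the interval $B$ are reconnected upon deletion of $B$, and check that this splits off precisely one extra cycle when $b$ is even and none when $b$ is odd. This is the only genuinely combinatorial point; everything else is bookkeeping with cycle numbers. Alternatively, since Proposition 2.5 is attributed to \cite{bn2}, one may simply quote the corresponding lemma there.
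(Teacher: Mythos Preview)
Your proposal is correct and matches the paper's approach exactly for (1) and (3); for (2) the paper simply quotes \cite{bn2} for the identity $|\sigma\gamma|-1=|\sigma^2|-|\sigma|$ (noting, as you do, that both sides count the even-length blocks of $\sigma$), whereas you supply a self-contained inductive proof of that same identity. The induction via an interval block is sound---when $b$ is even the odd elements $\{1,3,\ldots,b-1\}$ split off as a separate cycle of $\sigma\gamma$, and when $b$ is odd all of $\{1,\ldots,b\}$ get absorbed into the cycle through $p$---so nothing is missing; the only slip is the reference to ``Proposition 2.5'' at the end, which should be Proposition 2.4.
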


\begin{proof}
These results follow indeed from the following computations:
\begin{eqnarray*}
\varphi_1(\sigma,\gamma)&=&|\sigma\gamma^{-1}|-1=p-|\sigma|=\varphi_1(\sigma,\sigma)\\
\varphi_2(\sigma,\gamma)&=&|\sigma\gamma|-1=|\sigma^2|-|\sigma|=\varphi_2(\sigma,\sigma)\\
\varphi_3(\sigma,\gamma)&=&|\gamma|-|\gamma|=0=|\sigma|-|\sigma|=\varphi_3(\sigma,\sigma)
\end{eqnarray*}

To be more precise, here we have used the formula in \cite{bia} at (1), the formula used in (2) is non-trivial, and was established in \cite{bn2}, and the computation (3) is trivial.
\end{proof}

Let us first discuss the case $s=1$. There are 15 partitions $\pi\in P(2,2)$, and among them, the most ``basic'' ones are the 4 partitions $\pi\in P_{even}(2,2)$. With the convention that $A^\delta\in M_N(\mathbb C)$ denotes the diagonal of a matrix $A\in M_N(\mathbb C)$, we have:

\begin{proposition}
The partitions $\pi\in P_{even}(2,2)$ are as follows,
$$\pi_1=\begin{bmatrix}\circ&\bullet\\ \circ&\bullet\end{bmatrix}\quad,\quad
\pi_2=\begin{bmatrix}\circ&\bullet\\ \bullet&\circ\end{bmatrix}\quad,\quad
\pi_3=\begin{bmatrix}\circ&\circ\\ \bullet&\bullet\end{bmatrix}\quad,\quad
\pi_4=\begin{bmatrix}\circ&\circ\\ \circ&\circ\end{bmatrix}$$
with the associated linear maps $\varphi_\pi:M_n(\mathbb C)\to M_N(\mathbb C)$ being as follows:
$$\varphi_1(A)=A\quad,\quad
\varphi_2(A)=A^t\quad,\quad
\varphi_3(A)=Tr(A)1\quad,\quad
\varphi_4(A)=A^\delta$$
The corresponding matrices $\Lambda_\pi$ are all multiplicative, in the sense of Definition 2.3.
\end{proposition}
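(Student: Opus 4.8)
The statement splits into two independent parts, to be handled in turn: the enumeration of $P_{even}(2,2)$ together with the identification of the maps $\varphi_{\pi_i}$, which is elementary, and the multiplicativity of the corresponding Choi matrices $\Lambda_{\pi_i}$, which is the substantial point and which I plan to reduce to Proposition 2.4. For the first part: a partition of the $4$ points (two upper, two lower) with all blocks of even size is either the one-block partition, which is $\pi_4$, or has two blocks of size $2$, in which case it is one of the three matchings of the points (upper-left $a$, upper-right $c$, lower-left $b$, lower-right $d$) into pairs, namely $\{a,b\}\,\{c,d\}$, which is $\pi_1$, then $\{a,d\}\,\{b,c\}$, which is $\pi_2$, and $\{a,c\}\,\{b,d\}$, which is $\pi_3$. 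Substituting into the $s=1$ case of the formula in Definition 2.3, the Kronecker symbol $\delta_\pi$ becomes $\delta_{ab}\delta_{cd}$, $\delta_{ad}\delta_{bc}$, $\delta_{ac}\delta_{bd}$, $\delta_{abcd}$ for $\pi_1,\pi_2,\pi_3,\pi_4$, which gives $\varphi_{\pi_1}(e_{ac})=e_{ac}$, $\varphi_{\pi_2}(e_{ac})=e_{ca}$, $\varphi_{\pi_3}(e_{ac})=\delta_{ac}\,1$ and $\varphi_{\pi_4}(e_{ac})=\delta_{ac}e_{aa}$, that is, $\varphi_1(A)=A$, $\varphi_2(A)=A^t$, $\varphi_3(A)=Tr(A)1$ and $\varphi_4(A)=A^\delta$.

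For the Choi matrices, reading off $\Lambda_{ab,cd}=\varphi(e_{ac})_{bd}$ from Definition 1.3 gives
$$\Lambda_{\pi_1}=\sum_{ac}e_{ac}\otimes e_{ac}\ ,\qquad\Lambda_{\pi_2}=\sum_{ab}e_{ab}\otimes e_{ba}\ ,\qquad\Lambda_{\pi_3}=1\otimes1\ ,\qquad\Lambda_{\pi_4}=\sum_a e_{aa}\otimes e_{aa}\ ,$$
namely, up to normalization, a maximally entangled projection, the flip, the identity, and a diagonal projection. Each of these is self-adjoint, so $\Lambda_\pi^e=\Lambda_\pi$ for every exponent $e$, and in particular the generalized $*$-moments $(M_\sigma^e\otimes M_\tau^e)(\Lambda_\pi)$ will not depend on the choice of $e$.

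Now I plug each $\Lambda_\pi$ into the formula for $(M_\sigma^e\otimes M_\tau^e)(\Lambda)$ from Section 1. The summand is a product of Kronecker symbols in $i_1,\ldots,i_p,j_1,\ldots,j_p$, so the sum simply counts the tuples compatible with the forced identifications, which one resolves as follows: for $\Lambda_{\pi_3}=1\otimes1$ the $i$'s and $j$'s decouple, with $i$ forced constant along the cycles of $\sigma$ and $j$ along those of $\tau$; for $\Lambda_{\pi_1}$ one gets $j=i$ with $i$ constant along the cycles of $\sigma\tau^{-1}$; for the flip $\Lambda_{\pi_2}$ one gets $j_s=i_{\sigma(s)}$ and then $i$ constant along the cycles of $\sigma\tau$; for $\Lambda_{\pi_4}$ one gets $j=i$ with $i$ constant along the cycles of $\sigma$ and of $\tau$ simultaneously, i.e. along the blocks of $\sigma\wedge\tau$. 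Dividing by $n^{|\sigma|+|\tau|}$, this yields, for $\sigma,\tau\in NC_p$,
$$(M_\sigma^e\otimes M_\tau^e)(\Lambda_{\pi_1})=n^{|\sigma\tau^{-1}|-|\sigma|-|\tau|}\ ,\qquad(M_\sigma^e\otimes M_\tau^e)(\Lambda_{\pi_2})=n^{|\sigma\tau|-|\sigma|-|\tau|}\ ,$$
$$(M_\sigma^e\otimes M_\tau^e)(\Lambda_{\pi_3})=1\ ,\qquad(M_\sigma^e\otimes M_\tau^e)(\Lambda_{\pi_4})=n^{|\sigma\wedge\tau|-|\sigma|-|\tau|}\ .$$
In each case the exponent of $n$ equals, up to the $\tau$-independent term $-|\sigma|$, one of the functions $|\sigma\tau^{-1}|-|\tau|$, $|\sigma\tau|-|\tau|$, $|\sigma\wedge\tau|-|\tau|$ of Proposition 2.4 (and it is the constant $0$ for $\pi_3$). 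Since subtracting a quantity not depending on $\tau$ preserves the property $\varphi(\sigma,\gamma)=\varphi(\sigma,\sigma)$, Proposition 2.4 gives this property for all four functions $\varphi(\sigma,\tau)=(M_\sigma^e\otimes M_\tau^e)(\Lambda_{\pi_i})$, and hence the matrices $\Lambda_{\pi_i}$ are multiplicative in the sense of Definition 1.7.

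The computation is routine, and the only place needing care is the index bookkeeping for the four generalized $*$-moments, where the flip $\Lambda_{\pi_2}$ is the one to watch: one must verify that the two constraints $j_s=i_{\sigma(s)}$ and $i_s=j_{\tau(s)}$ together really do force $i$ to be constant along the cycles of the \emph{product} $\sigma\tau$, and similarly that $\Lambda_{\pi_4}$ produces the join $\sigma\wedge\tau$ and not the meet. The only non-elementary input is Proposition 2.4(2) — equivalently the identity $|\sigma\gamma|=|\sigma^2|-|\sigma|+1$ for $\sigma\in NC_p$ — which is quoted from \cite{bn2}.
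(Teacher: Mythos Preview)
Your proof is correct and follows essentially the same route as the paper's own argument: compute $\varphi_{\pi_i}$ on matrix units, read off the Choi matrices, note self-adjointness to drop the exponents, evaluate $(M_\sigma\otimes M_\tau)(\Lambda_{\pi_i})$ as $n$ raised to a cycle/block count, and then invoke Proposition 2.4. The only additions are your explicit enumeration of $P_{even}(2,2)$ and the remark that subtracting the $\tau$-independent term $-|\sigma|$ preserves the condition $\varphi(\sigma,\gamma)=\varphi(\sigma,\sigma)$, both of which the paper leaves implicit.
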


\begin{proof}
According to the formula in Definition 2.3, taken at $s=1$, we have:
$$\varphi_\pi(e_{ac})=\sum_{bd}\delta_\pi\begin{pmatrix}a&c\\ b&d\end{pmatrix}e_{bd}$$

In the case of the 4 partitions in the statement, these maps are given by:
$$\varphi_1(e_{ac})=e_{ac}\quad,\quad
\varphi_2(e_{ac})=e_{ca}\quad,\quad
\varphi_3(e_{ac})=\delta_{ac}\sum_be_{bb}\quad,\quad
\varphi_4(e_{ac})=\delta_{ac}e_{aa}$$

Thus, we obtain the formulae in the statement. Regarding now the associated square matrices, appearing via $\Lambda_{ab,cd}=\varphi(e_{ac})_{bd}$, as in Definition 1.3, these are given by:
$$\Lambda^1_{ab,cd}=\delta_{ab}\delta_{cd}\quad,\quad
\Lambda^2_{ab,cd}=\delta_{ad}\delta_{bc}\quad,\quad
\Lambda^3_{ab,cd}=\delta_{ac}\delta_{bd}\quad,\quad
\Lambda^4_{ab,cd}=\delta_{abcd}$$

Since these matrices are all self-adjoint, we can assume that all the exponents are 1 in Definition 1.7, and the condition there becomes $(M_\sigma\otimes M_\gamma)(\Lambda)=(M_\sigma\otimes M_\sigma)(\Lambda)$. In order to check this condition, observe that in the case of the above 4 matrices, we have:
\begin{eqnarray*}
(M_\sigma\otimes M_\tau)(\Lambda_1)&=&\frac{1}{n^{|\sigma|+|\tau|}}\sum_{i_1\ldots i_p}\delta_{i_{\sigma(1)}i_{\tau(1)}}\ldots\delta_{i_{\sigma(p)}i_{\tau(p)}}=n^{|\sigma\tau^{-1}|-|\sigma|-|\tau|}\\
(M_\sigma\otimes M_\tau)(\Lambda_2)&=&\frac{1}{n^{|\sigma|+|\tau|}}\sum_{i_1\ldots i_p}\delta_{i_1i_{\sigma\tau(1)}}\ldots\delta_{i_pi_{\sigma\tau(p)}}=n^{|\sigma\tau|-|\sigma|-|\tau|}\\
(M_\sigma\otimes M_\tau)(\Lambda_3)&=&\frac{1}{n^{|\sigma|+|\tau|}}\sum_{i_1\ldots i_p}\sum_{j_1\ldots j_p}\delta_{i_1i_{\sigma(1)}}\delta_{j_1j_{\tau(1)}}\ldots\delta_{i_pi_{\sigma(p)}}\delta_{j_pj_{\tau(p)}}=1\\
(M_\sigma\otimes M_\tau)(\Lambda_4)&=&\frac{1}{n^{|\sigma|+|\tau|}}\sum_{i_1\ldots i_p}\delta_{i_1i_{\sigma(1)}i_{\tau(1)}}\ldots\delta_{i_pi_{\sigma(p)}i_{\tau(p)}}=n^{|\sigma\wedge\tau|-|\sigma|-|\tau|}
\end{eqnarray*}

By using now the results in Proposition 2.4 above, this gives the result.
\end{proof}

Summarizing, the partitions $\pi\in P_{even}(2,2)$ provide us with some concrete ``input'' for Theorem 1.8. The point now is that, when using this input, we obtain precisely the main known computations for the block-modified Wishart matrices, from \cite{aub}, \cite{cn1}, \cite{cn2}, \cite{mpa}:

\begin{theorem}
The asymptotic distribution results for the block-modified Wishart matrices coming from the partitions $\pi_1,\pi_2,\pi_3,\pi_4\in P_{even}(2,2)$ are as follows:
\begin{enumerate}
\item Marchenko-Pastur: $tW\sim\pi_t$, where $t=m/n$.

\item Aubrun type: $m(id\otimes t)W\sim law(\alpha_+-\alpha_-)$, with $\alpha_\pm\sim\pi_{m(n\pm 1)/2}$, free. 

\item Collins-Nechita one: $t(id\otimes tr(.)1)W\sim\pi_t$, where $t=mn$.

\item Collins-Nechita two: $m(id\otimes(.)^\delta)W\sim\pi_m$.
\end{enumerate}
\end{theorem}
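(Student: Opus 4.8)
The plan is to apply Theorem 1.8 to each of the four maps $\varphi_1,\varphi_2,\varphi_3,\varphi_4$, using the multiplicativity already established in Proposition 2.6, so that the only remaining work is to identify the $*$-distribution $\rho=\mathrm{law}(\Lambda_\pi)$ and then the associated compound free Poisson law $\pi_{mn\rho}$. Since the four matrices $\Lambda^1,\Lambda^2,\Lambda^3,\Lambda^4$ are all self-adjoint (as noted in the proof of Proposition 2.6), each $\rho$ is an honest probability measure on $\mathbb R$, and by Proposition 1.5 the law $\pi_{mn\rho}$ is determined by its free cumulants, which are $mn$ times the moments of $\rho$; equivalently one uses the ``multiplicative variables'' description $\pi_{mn\rho}=\mathrm{law}(\sum_i z_i\alpha_i)$ when $\rho=\sum_i c_i\delta_{z_i}$.

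For the concrete identification, here is what I expect case by case. For $\pi_1$: $\Lambda^1_{ab,cd}=\delta_{ab}\delta_{cd}$ is (up to normalization) a rank-one projection, $\Lambda^1 = n\,e\otimes e$-type, so $\mathrm{law}(\Lambda^1)$ in the normalized trace of $M_n(\mathbb C)\otimes M_n(\mathbb C)=M_{n^2}(\mathbb C)$ is $\frac{1}{n^2}\delta_n + (1-\frac{1}{n^2})\delta_0$; then $mn\rho$ has mass $m/n$ at the point $n$, so $\pi_{mn\rho}$ is the free Poisson (Marchenko--Pastur) law scaled appropriately, giving $tW\sim\pi_t$ with $t=m/n$ after tracking the factor $m$ in front of $\tilde W$. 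For $\pi_2$: $\Lambda^2_{ab,cd}=\delta_{ad}\delta_{bc}$ is the flip operator on $\mathbb C^n\otimes\mathbb C^n$, whose spectrum is $\pm1$ with multiplicities $n(n+1)/2$ and $n(n-1)/2$, so $\rho=\frac{n+1}{2n}\delta_1+\frac{n-1}{2n}\delta_{-1}$; hence $mn\rho$ puts mass $m(n+1)/2$ at $+1$ and $m(n-1)/2$ at $-1$, and by the multiplicative-variable formula $\pi_{mn\rho}=\mathrm{law}(\alpha_+-\alpha_-)$ with $\alpha_\pm$ free Poisson of those parameters — this is the Aubrun-type statement, and after dividing $\tilde W$ by the implicit normalization one gets $m(\mathrm{id}\otimes t)W\sim\mathrm{law}(\alpha_+-\alpha_-)$. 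For $\pi_3$: $\Lambda^3_{ab,cd}=\delta_{ac}\delta_{bd}$ is the identity matrix $1\in M_{n^2}(\mathbb C)$, so $\rho=\delta_1$, $mn\rho=mn\,\delta_1$, and $\pi_{mn\rho}=\pi_{mn}=\pi_t$ with $t=mn$. For $\pi_4$: $\Lambda^4_{ab,cd}=\delta_{abcd}$ is a rank-$n$ diagonal projection in $M_{n^2}(\mathbb C)$, so $\rho=\frac{1}{n}\delta_1+(1-\frac1n)\delta_0$, $mn\rho$ has mass $m$ at $1$, and $\pi_{mn\rho}=\pi_m$.

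So the skeleton of the proof is: (i) invoke Proposition 2.6 to know each $\Lambda_\pi$ is multiplicative and Theorem 1.8 to get $m\tilde W\sim\pi_{mn\rho_\pi}$; (ii) compute $\rho_\pi=\mathrm{law}(\Lambda_\pi)$ by recognizing each $\Lambda_\pi$ as a familiar operator (rank-one projector, flip, identity, diagonal projector) and reading off its eigenvalues with multiplicities; (iii) feed $mn\rho_\pi$ into Proposition 1.5 to name the compound free Poisson law, using the $\sum z_i\alpha_i$ formula where $\rho$ is atomic; (iv) reconcile the scalar normalization — the theorem is stated for $m\tilde W$, whereas the four target statements are phrased with various prefactors ($t$, $m$, $m(\mathrm{id}\otimes t)$), so one must check that, e.g., $tW = (t/m)\cdot mW$ and that scaling a variable by a constant scales its free cumulants accordingly, matching $\pi_{cmn\rho}$ for the right $c$. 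The main obstacle, such as it is, is step (iv): keeping the bookkeeping of the parameters $t=m/n$ versus $t=mn$ straight across the four cases and making sure the prefactors quoted from \cite{aub}, \cite{cn1}, \cite{cn2}, \cite{mpa} line up exactly with what Theorem 1.8 produces; the genuinely mathematical content (the spectra of the four $\Lambda_\pi$) is elementary linear algebra.
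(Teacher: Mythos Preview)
Your proposal is correct and follows essentially the same route as the paper: invoke multiplicativity (established in Proposition 2.5, not 2.6 as you wrote) so that Theorem 1.8 applies, then recognize each $\Lambda_\pi$ as a familiar operator (rank-one projection times $n$, flip, identity, rank-$n$ diagonal projection), read off the spectral measure $\rho$, and identify $\pi_{mn\rho}$ via Proposition 1.5. The paper's proof does exactly this, with the same four identifications and the same resulting atomic measures $\rho$.
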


\begin{proof}
These observations go back to \cite{bn2}. In our setting, the maps $\varphi_1,\varphi_2,\varphi_3,\varphi_4$ in Proposition 2.5 give the 4 matrices in the statement, modulo some rescalings, and the computation of the corresponding distributions, using Theorem 1.8, goes as follows:

(1) Here $\Lambda=\sum_{ac}e_{ac}\otimes e_{ac}$, and so $\Lambda=nP$, where $P$ is the rank one projection on the vector $\sum_ae_a\otimes e_a\in\mathbb C^n\otimes\mathbb C^n$. Thus $\rho=\frac{n^2-1}{n^2}\delta_0+\frac{1}{n^2}\delta_n$, and this gives the result.

(2) Here $\Lambda=\sum_{ac}e_{ac}\otimes e_{ca}$ is the flip operator, $\Lambda(e_c\otimes e_a)=e_a\otimes e_c$. Thus $\rho=\frac{n-1}{2n}\delta_{-1}+\frac{n+1}{2n}\delta_1$, and so $mn\rho=\frac{m(n-1)}{2}\delta_{-1}+\frac{m(n+1)}{2}\delta_1$, which gives the result.

(3) Here $\Lambda=\sum_{ab}e_{aa}\otimes e_{bb}$ is the identity matrix, $\Lambda=1$. Thus in this case we have $\rho=\delta_1$, so $\pi_{mn\rho}=\pi_{mn}$, and so $m\tilde{W}\sim\pi_{mn}$, as claimed.

(4) Here $\Lambda=\sum_ae_{aa}\otimes e_{aa}$ is the orthogonal projection on $span(e_a\otimes e_a)\subset\mathbb C^n\otimes\mathbb C^n$. Thus we have $\rho=\frac{n-1}{n}\delta_0+\frac{1}{n}\delta_1$, and this gives the result.
\end{proof}

Summarizing, in what regards the block-modified Wishart matrix theory, the ``simplest'' modification maps $\varphi$ are the easy maps $\varphi_\pi$ coming from partitions $\pi\in P_{even}(2,2)$. Further exploiting this observation will be our main task, in what follows.

\section{General theory}

We develop now some general theory, for the partitions $\pi\in P_{even}(2s,2s)$, with $s\in\mathbb N$ being arbitrary. Our main result will basically refine the main diagrammatic result from \cite{bn2}, by lifting some assumptions from there, which are in fact not needed.

Let us begin with a reformulation of Definition 2.3, in terms of square matrices:

\begin{proposition}
Given $\pi\in P(2s,2s)$, the square matrix $\Lambda_\pi\in M_n(\mathbb C)\otimes M_n(\mathbb C)$ associated to the linear map $\varphi_\pi:M_n(\mathbb C)\to M_n(\mathbb C)$, with $n=N^s$, is given by:
$$(\Lambda_\pi)_{a_1\ldots a_s,b_1\ldots b_s,c_1\ldots c_s,d_1\ldots d_s}=
\delta_\pi\begin{pmatrix}a_1&\ldots&a_s&c_1&\ldots&c_s\\ b_1&\ldots&b_s&d_1&\ldots&d_s\end{pmatrix}$$
In addition, we have $\Lambda_\pi^*=\Lambda_{\pi^\circ}$, where $\pi\to\pi^\circ$ is the blockwise middle symmetry.
\end{proposition}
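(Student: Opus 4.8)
The plan is to prove the two assertions separately, both by direct computation from the definitions.

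For the first formula, I would start from Definition 1.3, which says $(\Lambda_\pi)_{XY,ZW}=\varphi_\pi(e_{XZ})_{YW}$, where I write $X=a_1\ldots a_s$, $Z=c_1\ldots c_s$ for the multi-indices labelling the basis of $\mathbb C^n=\mathbb C^{N^s}$, and similarly $Y,W$ for the output. Now plug in the formula for $\varphi_\pi$ from Definition 2.3: the coefficient of $e_{b_1\ldots b_s,d_1\ldots d_s}$ in $\varphi_\pi(e_{a_1\ldots a_s,c_1\ldots c_s})$ is exactly $\delta_\pi\binom{a_1\ldots a_s\ c_1\ldots c_s}{b_1\ldots b_s\ d_1\ldots d_s}$. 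Matching indices — the row index of the output matrix is $Y=b_1\ldots b_s$ and the column index is $W=d_1\ldots d_s$ — gives precisely the claimed expression for $(\Lambda_\pi)_{a_1\ldots a_s,b_1\ldots b_s,c_1\ldots c_s,d_1\ldots d_s}$. This step is essentially bookkeeping: the only thing to be careful about is the placement of the indices in the Choi--Jamiołkowski convention versus the contraction convention $e_{i_1}\otimes\cdots\otimes e_{i_{2s}}\to e_{i_1\ldots i_s,i_{s+1}\ldots i_{2s}}$ used to build $\varphi_\pi$ from $T_\pi$.

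For the adjoint formula $\Lambda_\pi^*=\Lambda_{\pi^\circ}$, I would compute matrix entries of the adjoint: $(\Lambda_\pi^*)_{XY,ZW}=\overline{(\Lambda_\pi)_{ZW,XY}}$. Since $\delta_\pi$ is real (indeed $\{0,1\}$-valued), the complex conjugation is harmless, so $(\Lambda_\pi^*)_{a_1\ldots a_s,b_1\ldots b_s,c_1\ldots c_s,d_1\ldots d_s}=\delta_\pi\binom{c_1\ldots c_s\ a_1\ldots a_s}{d_1\ldots d_s\ b_1\ldots b_s}$. The claim then reduces to the combinatorial identity
$$\delta_\pi\begin{pmatrix}c_1&\ldots&c_s&a_1&\ldots&a_s\\ d_1&\ldots&d_s&b_1&\ldots&b_s\end{pmatrix}=\delta_{\pi^\circ}\begin{pmatrix}a_1&\ldots&a_s&c_1&\ldots&c_s\\ b_1&\ldots&b_s&d_1&\ldots&d_s\end{pmatrix},$$
which is immediate from the definition of the blockwise middle symmetry $\pi\to\pi^\circ$: $\pi^\circ$ is by construction the partition obtained from $\pi$ by swapping the first $s$ columns with the last $s$ columns (in both rows), so a labelling of the points by indices is constant on the blocks of $\pi$ after this swap exactly when it is constant on the blocks of $\pi^\circ$ before it. Writing this out with the index conventions fixed above finishes the proof.

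The only genuine obstacle is notational rather than mathematical: one must be consistent about how a $2s$-point partition is split into two groups of $s$ points and about which group indexes the ``input'' versus ``output'' legs, so that the middle symmetry on partitions really corresponds to the transposition $\Lambda\mapsto\Lambda^*$ in the Choi picture. Once the conventions from Definitions 1.3 and 2.3 are unwound carefully, both statements are direct verifications; I would present them in the order above, proving the entry formula first and then using it to read off the adjoint.
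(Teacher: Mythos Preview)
Your proposal is correct and follows essentially the same approach as the paper's own proof: both derive the entry formula by plugging Definition 2.3 into the Choi--Jamio\l{}kowski convention $\Lambda_{ab,cd}=\varphi(e_{ac})_{bd}$, and both obtain $\Lambda_\pi^*=\Lambda_{\pi^\circ}$ by noting that $\Lambda_\pi$ is real and then invoking the combinatorial identity $\delta_\pi\bigl(\begin{smallmatrix}c&a\\ d&b\end{smallmatrix}\bigr)=\delta_{\pi^\circ}\bigl(\begin{smallmatrix}a&c\\ b&d\end{smallmatrix}\bigr)$.
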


\begin{proof}
The formula for $\Lambda_\pi$ follows from the formula of $\varphi_\pi$ from Definition 2.3, by using our standard convention $\Lambda_{ab,cd}=\varphi(e_{ac})_{bd}$. Regarding now the second assertion, observe that with $\pi\to\pi^\circ$ being as above, for any multi-indices $a,b,c,d$ we have:
$$\delta_\pi\begin{pmatrix}c_1&\ldots&c_s&a_1&\ldots&a_s\\ d_1&\ldots&d_s&b_1&\ldots&b_s\end{pmatrix}
=\delta_{\pi^\circ}\begin{pmatrix}a_1&\ldots&a_s&c_1&\ldots&c_s\\ b_1&\ldots&b_s&d_1&\ldots&d_s\end{pmatrix}$$

Since $\Lambda_\pi$ is real, we conclude we have the following formula:
$$(\Lambda_\pi^*)_{ab,cd}=(\Lambda_\pi)_{cd,ab}=(\Lambda_{\pi^\circ})_{ab,cd}$$

This being true for any $a,b,c,d$, we obtain $\Lambda_\pi^*=\Lambda_{\pi^\circ}$, as claimed.
\end{proof}

In order to compute now the generalized $*$-moments of $\Lambda_\pi$, we first have:

\begin{proposition}
With $\pi\in P(2s,2s)$ and $\Lambda_\pi$ being as above, we have
\begin{eqnarray*}
(M_\sigma^e\otimes M_\tau^e)(\Lambda_\pi)
&=&\frac{1}{n^{|\sigma|+|\tau|}}\sum_{i_1^1\ldots i_p^s}\sum_{j_1^1\ldots j_p^s}
\delta_{\pi^{e_1}}\begin{pmatrix}i_1^1&\ldots&i_1^s&i_{\sigma(1)}^1&\ldots&i_{\sigma(1)}^s\\
j_1^1&\ldots&j_1^s&j_{\tau(1)}^1&\ldots&j_{\tau(1)}^s\end{pmatrix}\\
&&\hskip62mm\vdots\\
&&\hskip31mm\delta_{\pi^{e_p}}\begin{pmatrix}i_p^1&\ldots&i_p^s& i_{\sigma(p)}^1&\ldots&i_{\sigma(p)}^s\\
j_p^1&\ldots&j_p^s&j_{\tau(p)}^1&\ldots&j_{\tau(p)}^s\end{pmatrix}
\end{eqnarray*}
with the exponents $e_1,\ldots,e_p\in\{1,*\}$ at left corresponding to $e_1,\ldots,e_p\in\{1,\circ\}$ at right.
\end{proposition}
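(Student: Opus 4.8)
The plan is to substitute the explicit description of $\Lambda_\pi$ from Proposition 3.1 into the general generalized $*$-moment formula recalled in Section 1, and then keep careful track of the multi-index bookkeeping coming from the identification $n=N^s$. First I would recall that for any square matrix $\Lambda\in M_n(\mathbb C)\otimes M_n(\mathbb C)$ and any $\sigma,\tau\in S_p$ one has
$$(M_\sigma^e\otimes M_\tau^e)(\Lambda)=\frac{1}{n^{|\sigma|+|\tau|}}\sum_{i_1\ldots i_p}\sum_{j_1\ldots j_p}\Lambda_{i_1j_1,i_{\sigma(1)}j_{\tau(1)}}^{e_1}\ldots\Lambda_{i_pj_p,i_{\sigma(p)}j_{\tau(p)}}^{e_p}.$$
Since $n=N^s$, each summation index $i_r$ is really a multi-index $i_r=(i_r^1,\ldots,i_r^s)$ with entries in $\{1,\ldots,N\}$, and likewise for $j_r$; so the double sum over $i_1,\ldots,i_p,j_1,\ldots,j_p$ unfolds as a sum over $i_1^1,\ldots,i_p^s$ and $j_1^1,\ldots,j_p^s$, matching the summation on the right-hand side of the statement.

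Next I would handle the factors $\Lambda^{e_r}$. For $e_r=1$ this is $\Lambda_\pi$, whose entries are the Kronecker symbols $\delta_\pi$ by Proposition 3.1; for $e_r=*$ this is $\Lambda_\pi^*=\Lambda_{\pi^\circ}$, again by Proposition 3.1, whose entries are the Kronecker symbols $\delta_{\pi^\circ}$. Writing $\pi^1=\pi$ and $\pi^*=\pi^\circ$, in both cases the generic factor, read through the convention $\Lambda_{ab,cd}=\varphi(e_{ac})_{bd}$ and the assignment $a=i_r$, $b=j_r$, $c=i_{\sigma(r)}$, $d=j_{\tau(r)}$, becomes
$$\Lambda^{e_r}_{i_rj_r,\,i_{\sigma(r)}j_{\tau(r)}}=\delta_{\pi^{e_r}}\begin{pmatrix}i_r^1&\ldots&i_r^s&i_{\sigma(r)}^1&\ldots&i_{\sigma(r)}^s\\ j_r^1&\ldots&j_r^s&j_{\tau(r)}^1&\ldots&j_{\tau(r)}^s\end{pmatrix}.$$
Multiplying these $p$ factors over $r=1,\ldots,p$ and dividing by $n^{|\sigma|+|\tau|}$ gives precisely the claimed formula.

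Since every step is a direct substitution, there is no genuine obstacle. The only points requiring care are the correspondence between the exponents $e_r\in\{1,*\}$ acting on $\Lambda_\pi$ and the exponents $e_r\in\{1,\circ\}$ acting on the partition $\pi$ — which is exactly the identity $\Lambda_\pi^*=\Lambda_{\pi^\circ}$ from Proposition 3.1 — and the correct placement of the $\sigma$- and $\tau$-shifted multi-indices in the two rows of each Kronecker symbol, with the $i$-variables governing the upper row and the $j$-variables the lower row.
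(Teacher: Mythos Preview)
Your proposal is correct and follows essentially the same approach as the paper's own proof: write the generalized $*$-moment formula in multi-index notation (using $n=N^s$) and then substitute the entry formulae for $\Lambda_\pi$ and $\Lambda_\pi^*=\Lambda_{\pi^\circ}$ from Proposition~3.1. The paper's argument is only slightly more compressed, but the logical content is identical.
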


\begin{proof}
In multi-index notation, the general formula for the generalized $*$-moments for a tensor product square matrix $\Lambda\in M_n(\mathbb C)\otimes M_n(\mathbb C)$, with $n=N^s$, is:
\begin{eqnarray*}
(M_\sigma^e\otimes M_\tau^e)(\Lambda)
&=&\frac{1}{n^{|\sigma|+|\tau|}}\sum_{i_1^1\ldots i_p^s}\sum_{j_1^1\ldots j_p^s}
\Lambda^{e_1}_{i_1^1\ldots i_1^sj_1^1\ldots j_1^s,i_{\sigma(1)}^1\ldots i_{\sigma(1)}^sj_{\tau(1)}^1\ldots j_{\tau(1)}^s}\\
&&\hskip52mm\vdots\\
&&\hskip30mm\Lambda^{e_p}_{i_p^1\ldots i_p^sj_p^1\ldots j_p^s,i_{\sigma(p)}^1\ldots i_{\sigma(p)}^sj_{\tau(p)}^1\ldots j_{\tau(p)}^s}
\end{eqnarray*}

By using now the formulae in Proposition 3.1 above for the matrix entries of $\Lambda_\pi$, and of its adjoint matrix $\Lambda_\pi^*=\Lambda_{\pi^\circ}$, this gives the formula in the statement.
\end{proof}

As a conclusion, the quantities $(M_\sigma^e\otimes M_\tau^e)(\Lambda_\pi)$ that we are interested in can be theoretically computed in terms of $\pi$, but the combinatorics is quite non-trivial.

As explained in \cite{bn2}, some simplifications appear in the symmetric case, $\pi=\pi^\circ$. Indeed, for such partitions we can use the following decomposition result:

\begin{proposition}
Each symmetric partition $\pi\in P_{even}(2s,2s)$ has a finest symmetric decomposition $\pi=[\pi_1,\ldots,\pi_R]$, with the components $\pi_t$ being of two types, as follows:
\begin{enumerate}
\item Symmetric blocks of $\pi$. Such a block must have $r+r$ matching upper legs and $v+v$ matching lower legs, with $r+v>0$.

\item Unions $\beta\sqcup\beta^\circ$ of asymmetric blocks of $\pi$. Here $\beta$ must have $r+u$ unmatching upper legs and $v+w$ unmatching lower legs, with $r+u+v+w>0$.
\end{enumerate}
\end{proposition}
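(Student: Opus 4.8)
The plan is to use the fact that, since $\pi=\pi^\circ$, the blockwise middle symmetry acts on the blocks of $\pi$ itself. Write $\circ$ for this symmetry, regarded as an involution of the set of $4s$ legs of $\pi$; it interchanges the two upper halves and the two lower halves, so it fixes no leg. Since $\pi^\circ$ has blocks $\{\beta^\circ\mid\beta\in\pi\}$, the assumption $\pi=\pi^\circ$ says exactly that $\beta\mapsto\beta^\circ$ permutes the blocks of $\pi$, and being induced by an involution it is itself an involution of the block set. Its orbits are therefore either fixed points $\beta=\beta^\circ$ (symmetric blocks) or two-element sets $\{\beta,\beta^\circ\}$ with $\beta\ne\beta^\circ$ (pairs of asymmetric blocks). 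Declaring the components $\pi_t$ to be precisely these orbits --- a lone symmetric block, or a union $\beta\sqcup\beta^\circ$ --- each $\pi_t$ has a $\circ$-invariant leg-set and so is a symmetric partition on its own legs; since $P_{even}$ has no singleton blocks, every leg lies in some block, so the leg-sets of the $\pi_t$ partition all $4s$ legs; and superposing the $\pi_t$ recovers $\pi$. This is the decomposition $\pi=[\pi_1,\dots,\pi_R]$.

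Next I would check that $[\pi_1,\dots,\pi_R]$ is the finest symmetric decomposition. A block of $\pi$ cannot be broken up inside a genuine decomposition, so each component of any decomposition is a union of blocks of $\pi$; and if a component of a symmetric decomposition contained an asymmetric block $\beta$ but not $\beta^\circ$, its leg-set would fail to be $\circ$-invariant, contradicting the symmetry of that component. Hence every symmetric decomposition is a coarsening of $[\pi_1,\dots,\pi_R]$, which is thus the finest one.

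It remains to read off the leg counts. For a symmetric block $\beta$, the involution $\circ$ restricts to $\beta$ without fixed points, pairing each left-half upper leg of $\beta$ with a right-half upper leg and each left-half lower leg with a right-half lower leg; so $\beta$ has $r+r$ upper and $v+v$ lower legs for some $r,v\ge0$, with $r+v>0$ because $\beta$ is nonempty (its size $2(r+v)$ being automatically even, consistently with $P_{even}$). For an asymmetric block $\beta$ one first notes that $\beta$ contains no leg $\ell$ with $\ell^\circ\in\beta$: indeed $\ell^\circ\in\beta$ gives $\ell=(\ell^\circ)^\circ\in\beta^\circ$, so $\beta$ and $\beta^\circ$ share the leg $\ell$ and hence coincide, a contradiction. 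Thus all legs of $\beta$ are unmatching; writing the numbers of its upper legs in the two halves as $r$ and $u$, and of its lower legs as $v$ and $w$, we get $r+u$ unmatching upper legs and $v+w$ unmatching lower legs, with $r+u+v+w>0$, while $\beta^\circ$ carries the mirrored counts, so that $\beta\sqcup\beta^\circ$ is symmetric with $r+u$ upper and $v+w$ lower legs on each side.

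The combinatorics here is light, and the only genuine work is conceptual: pinning down exactly what a (finest) symmetric decomposition $[\pi_1,\dots,\pi_R]$ is --- in particular, that each component must again be a legitimate rectangular symmetric partition, and that ``finest'' is meant with respect to coarsening of decompositions --- together with the one non-obvious micro-step above, namely that an asymmetric block shares no leg, hence no matching pair of legs, with its mirror image. Once these are in place, everything else is bookkeeping, the $P_{even}$ parity simply being recorded for the blocks rather than derived.
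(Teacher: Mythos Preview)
Your argument is correct and follows the same approach as the paper: both use that $\pi=\pi^\circ$ makes $\beta\mapsto\beta^\circ$ an involution on the blocks of $\pi$, whose orbits (symmetric singletons or asymmetric pairs) give the components. The paper's proof is a two-line sketch of this, whereas you additionally verify finestness and carefully justify the ``matching/unmatching'' leg counts, which the paper leaves implicit.
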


\begin{proof}
Consider indeed the block decomposition of our partition, $\pi=[\beta_1,\ldots,\beta_T]$. Then $[\beta_1,\ldots,\beta_T]=[\beta_1^\circ,\ldots,\beta_T^\circ]$, so each block $\beta\in\pi$ is either symmetric, $\beta=\beta^\circ$, or is asymmetric, and disjoint from $\beta^\circ$, which must be a block of $\pi$ too. The result follows.
\end{proof}

The idea will be that of decomposing over the components of $\pi$. First, we have:

\begin{proposition}
For the standard pairing $\eta\in P_{even}(2s,2s)$ having horizontal strings,
$$\eta=\begin{bmatrix}
a&b&c&\ldots&a&b&c&\ldots\\
\alpha&\beta&\gamma&\ldots&\alpha&\beta&\gamma&\ldots
\end{bmatrix}$$
we have $(M_\sigma\otimes M_\tau)(\Lambda_\eta)=1$, for any $p\in\mathbb N$, and any $\sigma,\tau\in NC_p$.
\end{proposition}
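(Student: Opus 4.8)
The plan is to compute the matrix $\Lambda_\eta$ explicitly, recognize it as a simple tensor object (a partial trace / identity-type operator), and then verify the $*$-moment identity by a direct index count, exploiting the fact that for this particular partition the Kronecker constraints decouple into two independent pairings. First I would write out $\delta_\eta$: the blocks of $\eta$ are the $s$ pairs matching the $j$-th upper-left leg with the $j$-th upper-right leg, and likewise on the lower row. Hence
$$
(\Lambda_\eta)_{a_1\ldots a_s,b_1\ldots b_s,c_1\ldots c_s,d_1\ldots d_s}
=\prod_{j=1}^s\delta_{a_jc_j}\,\delta_{b_jd_j}.
$$
In the contracted $M_n(\mathbb C)\otimes M_n(\mathbb C)$ picture with $n=N^s$, this says $\Lambda_\eta=1_n\otimes 1_n$ is simply the identity matrix (equivalently, $\varphi_\eta=\mathrm{id}$). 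So the statement reduces to the fact that the identity matrix has all generalized $*$-moments equal to $1$.

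Concretely, since $\Lambda_\eta$ is self-adjoint, the exponents play no role, and by Proposition 3.2 we get
$$
(M_\sigma\otimes M_\tau)(\Lambda_\eta)
=\frac{1}{n^{|\sigma|+|\tau|}}\sum_{i_1^1\ldots i_p^s}\sum_{j_1^1\ldots j_p^s}
\prod_{r=1}^p\prod_{k=1}^s \delta_{i_r^k\, i_{\sigma(r)}^k}\,\delta_{j_r^k\, j_{\tau(r)}^k}.
$$
Now the key observation is that the $i$-sum and the $j$-sum factor completely, and moreover each splits over the $s$ tensor legs. For a fixed leg $k$, the family of constraints $i_r^k=i_{\sigma(r)}^k$ for all $r$ forces $i^k$ to be constant on each cycle of $\sigma$, so that inner sum contributes $N^{|\sigma|}$; taking the product over $k=1,\ldots,s$ gives $N^{s|\sigma|}=n^{|\sigma|}$. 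Identically, the $j$-sum gives $n^{|\tau|}$. Hence $(M_\sigma\otimes M_\tau)(\Lambda_\eta)=n^{-|\sigma|-|\tau|}\cdot n^{|\sigma|}\cdot n^{|\tau|}=1$, as claimed. Note this works for arbitrary $\sigma,\tau\in S_p$, not merely in $NC_p$, which is consistent with $\Lambda_\eta$ being not just multiplicative but the identity.

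The only point requiring a word of care — and the closest thing to an obstacle — is making sure the decoupling is legitimate: one must check that the Choi matrix of $\varphi_\eta=\mathrm{id}$ under the convention $\Lambda_{ab,cd}=\varphi(e_{ac})_{bd}$ is indeed $\delta_{ac}\delta_{bd}$ (this matches the $s=1$, $\pi_3$ computation $\Lambda^3_{ab,cd}=\delta_{ac}\delta_{bd}$ in Proposition 2.5, where $\varphi_3$ was identity-like on Choi matrices), and that the horizontal-string pattern of $\eta$ at level $s$ produces exactly the leg-by-leg product above with no cross-leg coupling. Once the block structure of $\eta$ is spelled out this is immediate, but it is worth stating cleanly, since the whole subsequent strategy of decomposing a symmetric $\pi$ over its components $[\pi_1,\ldots,\pi_R]$ (Proposition 3.4) and multiplying the contributions relies on $\eta$ being the ``neutral element'' that contributes a factor of $1$.
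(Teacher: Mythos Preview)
Your argument is correct and follows essentially the same route as the paper's proof: write out the Kronecker constraints from Proposition~3.2, observe that they factor over the $s$ tensor legs and separately over the $i$- and $j$-indices, and count $N^{|\sigma|}$ per leg on the $i$ side and $N^{|\tau|}$ per leg on the $j$ side. Your additional remark that $\Lambda_\eta$ is literally the identity matrix in $M_n(\mathbb C)\otimes M_n(\mathbb C)$ is a nice conceptual shortcut the paper does not make explicit.

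One slip to fix: the parenthetical ``equivalently, $\varphi_\eta=\mathrm{id}$'' is wrong. Under the convention $\Lambda_{ab,cd}=\varphi(e_{ac})_{bd}$, the map whose Choi matrix is the identity is $\varphi(A)=\mathrm{Tr}(A)\,1$, not the identity map; at $s=1$ this is exactly $\varphi_3$, not $\varphi_1$. You seem aware of this in your final paragraph (``identity-like on Choi matrices''), so just delete or correct the earlier claim. It does not affect the computation, since everything runs through $\Lambda_\eta$ directly.
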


\begin{proof}
As a first observation, the result holds indeed at $s=1$, due to the computations in the proof of Proposition 2.5. In general, by using Proposition 3.2, we obtain:
\begin{eqnarray*}
(M_\sigma\otimes M_\tau)(\Lambda_\eta)
&=&\frac{1}{n^{|\sigma|+|\tau|}}\sum_{i_1^1\ldots i_p^s}\sum_{j_1^1\ldots j_p^s}\delta_{i_1^1i_{\sigma(1)}^1}\ldots\delta_{i_1^si_{\sigma(1)}^s}\cdot\delta_{j_1^1j_{\tau(1)}^1}\ldots\delta_{j_1^sj_{\tau(1)}^s}\\
&&\hskip52mm\vdots\\
&&\hskip30mm\delta_{i_p^1i_{\sigma(p)}^1}\ldots\delta_{i_p^si_{\sigma(p)}^s}\cdot\delta_{j_p^1j_{\tau(p)}^1}\ldots\delta_{j_p^sj_{\tau(p)}^s}
\end{eqnarray*}

By transposing the two $p\times s$ matrices of Kronecker symbols, we obtain:
\begin{eqnarray*}
(M_\sigma\otimes M_\tau)(\Lambda_\eta)
&=&\frac{1}{n^{|\sigma|+|\tau|}}\sum_{i_1^1\ldots i_p^1}\sum_{j_1^1\ldots j_p^1}\delta_{i_1^1i_{\sigma(1)}^1}\ldots\delta_{i_p^1i_{\sigma(p)}^1}\cdot\delta_{j_1^1j_{\tau(1)}^1}\ldots\delta_{j_p^1j_{\tau(p)}^1}\\
&&\hskip52mm\vdots\\
&&\hskip13.5mm\sum_{i_1^s\ldots i_p^s}\sum_{j_1^s\ldots j_p^s}\delta_{i_1^si_{\sigma(1)}^s}\ldots\delta_{i_p^si_{\sigma(p)}^s}\cdot\delta_{j_1^sj_{\tau(1)}^s}\ldots\delta_{j_p^sj_{\tau(p)}^s}
\end{eqnarray*}

We can now perform all the sums, and we obtain in this way:
$$(M_\sigma\otimes M_\tau)(\Lambda_\eta)
=\frac{1}{n^{|\sigma|+|\tau|}}(N^{|\sigma|}N^{|\tau|})^s=1$$
 
Thus, the formula in the statement holds indeed.
\end{proof}

We can now perform the decomposition over the components, as follows:

\begin{theorem}
Assuming that $\pi\in P_{even}(2s,2s)$ is symmetric, $\pi=\pi^\circ$, we have
$$(M_\sigma\otimes M_\tau)(\Lambda_\pi)=\prod_{t=1}^R(M_\sigma\otimes M_\tau)(\Lambda_{\pi_t})$$
whenever $\pi=[\pi_1,\ldots,\pi_R]$ is a decomposition into symmetric subpartitions, which each $\pi_t$ being completed with horizontal strings, coming from the standard pairing $\eta$.
\end{theorem}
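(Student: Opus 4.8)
The plan is to expand both sides of the claimed identity by means of Proposition 3.2 and then to verify a purely combinatorial factorization of the resulting index sums. Since $\pi=\pi^\circ$, every $\pi^{e_r}$ equals $\pi$, so the exponents $e_r\in\{1,\circ\}$ play no role and we may work with $\delta_\pi$ throughout. Writing $\delta_\pi$ as the product of the Kronecker symbols attached to the individual blocks of $\pi$, and regrouping those blocks according to the given decomposition $\pi=[\pi_1,\ldots,\pi_R]$, one gets $\delta_\pi=\prod_{t=1}^R\delta_{\pi_t}$, where $\delta_{\pi_t}$ constrains only the legs belonging to $\pi_t$. Consequently the product over the $p$ ``time steps'' in Proposition 3.2 factors as $\prod_{t=1}^R\prod_{r=1}^p\delta_{\pi_t}(\cdots)$, and the whole point is to see that the sum over the $2ps$ summation indices $i_r^k,j_r^k$ factors the same way.

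The key observation is the following. In the formula of Proposition 3.2, at time step $r$ the two legs of the upper row sitting in column $k$ carry the indices $i_r^k$ (the $a$-leg) and $i_{\sigma(r)}^k$ (the $c$-leg), while the two legs of the lower row in column $k$ carry $j_r^k$ (the $b$-leg) and $j_{\tau(r)}^k$ (the $d$-leg). Because each $\pi_t$ is $\circ$-symmetric, the $a$-leg of column $k$ belongs to $\pi_t$ if and only if the $c$-leg of column $k$ does (apply the blockwise middle symmetry), and likewise for the two lower legs. Hence, for each $t$, the columns split into a set $U_t$ (those whose upper legs lie in $\pi_t$) and a set $D_t$ (those whose lower legs lie in $\pi_t$); and since the $\pi_t$ partition all legs of $\pi$, the families $\{U_t\}_t$ and $\{D_t\}_t$ are each partitions of $\{1,\ldots,s\}$, so in particular $\sum_t|U_t|=\sum_t|D_t|=s$. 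Now $\prod_{r=1}^p\delta_{\pi_t}$ depends only on the variables $\{i_r^k:1\le r\le p,\ k\in U_t\}$ and $\{j_r^k:1\le r\le p,\ k\in D_t\}$ (as $r$ runs over $1,\ldots,p$, so do $\sigma(r)$ and $\tau(r)$), and across distinct $t$ these blocks of variables are pairwise disjoint. Therefore the index sum factorizes over $t$.

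It remains to match the normalizations. Fix $t$. The factor coming from $\pi_t$ is the sum of $\prod_{r=1}^p\delta_{\pi_t}$ over the $i$-variables in columns $U_t$ and the $j$-variables in columns $D_t$ only. To identify it with $(M_\sigma\otimes M_\tau)(\Lambda_{\pi_t})$ — where $\pi_t$ is understood as completed on its remaining legs by the horizontal strings of $\eta$ — reinsert the missing summations: each column $k\notin U_t$ carries, in the completed partition, a horizontal string on its two upper legs, contributing a factor $\sum_{\{i_r^k\}_r}\prod_r\delta_{i_r^k,i_{\sigma(r)}^k}=N^{|\sigma|}$, and each column $k\notin D_t$ contributes similarly a factor $N^{|\tau|}$ — precisely the computation already made in the proof of Proposition 3.5. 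One obtains in this way that the $t$-th factor equals $N^{|U_t||\sigma|+|D_t||\tau|}(M_\sigma\otimes M_\tau)(\Lambda_{\pi_t})$. Multiplying over $t=1,\ldots,R$ and using $\sum_t|U_t|=\sum_t|D_t|=s$, the product of these powers of $N$ is exactly $N^{s|\sigma|+s|\tau|}=n^{|\sigma|+|\tau|}$, which cancels the normalization prefactor of $(M_\sigma\otimes M_\tau)(\Lambda_\pi)$, leaving the asserted identity $(M_\sigma\otimes M_\tau)(\Lambda_\pi)=\prod_{t=1}^R(M_\sigma\otimes M_\tau)(\Lambda_{\pi_t})$.

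The step I expect to be the main obstacle is the disjointness assertion in the second paragraph: that no summation index is shared between the constraint systems of two distinct components. This is precisely where the $\circ$-symmetry of each $\pi_t$ is indispensable — it is what forces ``$a$-leg of column $k$ in $\pi_t$'' to be equivalent to ``$c$-leg of column $k$ in $\pi_t$'', so that $U_t$ and $D_t$ are well defined and, over $t$, partition the columns; without it the $i$-variables of a given column could be touched by two different $\pi_t$'s and the sum would not split. Once this is in place, everything else is the bookkeeping of powers of $N$, following verbatim the model computation in Proposition 3.5; the two types of symmetric components furnished by Proposition 3.4 (symmetric blocks, and unions $\beta\sqcup\beta^\circ$ of asymmetric blocks) are simply the concrete pieces to which the argument is applied.
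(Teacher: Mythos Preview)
Your argument is correct and is essentially the paper's own proof, carried out in more explicit detail: you make precise the paper's one-line claim that ``the number of double arrays $[ij]$ naturally decomposes over the subpartitions $\pi_t$'' by introducing the column sets $U_t,D_t$ and using the $\circ$-symmetry of each $\pi_t$ to show the index variables split cleanly, and your per-$t$ bookkeeping of the powers of $N$ is equivalent to the paper's computation of the global constant $K$ via the observation that the horizontal strings in the $R$ completed partitions together amount to $R-1$ copies of $\eta$. One cosmetic remark: your references are off by one --- the ``model computation'' for horizontal strings is Proposition~3.4 (not 3.5, which is the present theorem), and the two types of symmetric components come from Proposition~3.3 (not 3.4).
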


\begin{proof}
We use the general formula in Proposition 3.2. In the symmetric case the various $e_x$ exponents dissapear, and we can write the formula there as follows:
$$(M_\sigma\otimes M_\tau)(\Lambda_\pi)
=\frac{1}{n^{|\sigma|+|\tau|}}\#\left\{i,j\Big|\ker\begin{pmatrix}i_x^1&\ldots&i_x^s&i_{\sigma(x)}^1&\ldots&i_{\sigma(x)}^s\\
j_x^1&\ldots&j_x^s&j_{\tau(x)}^1&\ldots&j_{\tau(x)}^s\end{pmatrix}\leq\pi,\forall x\right\}$$

The point now is that in this formula, the number of double arrays $[ij]$ that we are counting naturally decomposes over the subpartitions $\pi_t$. Thus, we have a formula of the following type, with $K$ being a certain normalization constant:
$$(M_\sigma\otimes M_\tau)(\Lambda_\pi)=K\prod_{t=1}^R(M_\sigma\otimes M_\tau)(\Lambda_{\pi_t})$$

Regarding now the precise value of $K$, our claim is that this is given by:
$$K=\frac{n^{(|\sigma|+|\tau|)R}}{n^{|\sigma|+|\tau|}}\cdot\frac{1}{n^{(|\sigma|+|\tau|)(R-1)}}=1$$

Indeed, the fraction on the left comes from the standard $\frac{1}{n^{|\sigma|+|\tau|}}$ normalizations of all the $(M_\sigma\otimes M_\tau)(\Lambda)$ quantities involved. As for the term on the right, this comes from the contribution of the horizontal strings, which altogether contribute as the strings of the standard pairing $\eta\in P_{even}(2s,2s)$, counted $R-1$ times. But, according to Proposition 3.4 above, the strings of $\eta$ contribute with a $n^{|\sigma|+|\tau|}$ factor, and this gives the result.
\end{proof}

Summarizing, in the easy case we are led to the study of the partitions $\pi\in P_{even}(2s,2s)$ which are symmetric, and we have so far a decomposition formula for them.

\section{Symmetric components}

In this section we keep building on the material developed above. Our purpose is that of converting Theorem 3.5 into an explicit formula, that we can use later on.

We have to compute the contributions of the components. First, we have:

\begin{proposition}
For a symmetric partition $\pi\in P_{even}(2s,2s)$, consisting of one symmetric block, completed with horizontal strings, we have
$$(M_\sigma\otimes M_\tau)(\Lambda_\pi)=N^{|\lambda|-r|\sigma|-v|\tau|}$$
where $\lambda\in P_p$ is a partition constructed as follows,
$$\lambda=\begin{cases}
\sigma\wedge\tau&{\rm if}\ r,v\geq1\\
\sigma&{\rm if}\ r\geq1,v=0\\
\tau&{\rm if}\ r=0,v\geq1
\end{cases}$$
and where $r/v$ is half of the number of upper/lower legs of the symmetric block.
\end{proposition}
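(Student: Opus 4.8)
The plan is to unfold the definition of $(M_\sigma\otimes M_\tau)(\Lambda_\pi)$ from Proposition 3.2 in the special case where $\pi$ consists of a single symmetric block, of size $2(r+v)$, completed with horizontal strings to live in $P_{even}(2s,2s)$. Concretely, label the legs of the symmetric block so that $r$ of the $s$ upper positions and $v$ of the $s$ lower positions are ``block legs'', while the remaining $s-r$ upper and $s-v$ lower positions are part of horizontal strings. Since $\pi=\pi^\circ$, all the exponents $e_x$ disappear, and Proposition 3.2 gives
$$(M_\sigma\otimes M_\tau)(\Lambda_\pi)=\frac{1}{N^{s(|\sigma|+|\tau|)}}\,\#\left\{(i,j)\ \Big|\ \ker\begin{pmatrix}i_x^1&\ldots&i_x^s&i_{\sigma(x)}^1&\ldots&i_{\sigma(x)}^s\\ j_x^1&\ldots&j_x^s&j_{\tau(x)}^1&\ldots&j_{\tau(x)}^s\end{pmatrix}\leq\pi,\ \forall x\right\},$$
using $n=N^s$. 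The constraint ``$\ker\leq\pi$'' splits along the block structure of $\pi$: the $s-r$ upper and $s-v$ lower horizontal-string coordinates are governed exactly as in Proposition 3.4, contributing a factor $N^{(s-r)|\sigma|+(s-v)|\tau|}$ after dividing by the corresponding part of the normalization; the remaining $r$ upper and $v$ lower coordinates are all forced, by the single symmetric block, to be equal to one common value per index pair $x$.

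Next I would analyze this surviving ``block'' contribution. The block condition says that for each $x$, all the symbols $i_x^1,\ldots$ ($r$ of them), $i_{\sigma(x)}^1,\ldots$ ($r$ of them), $j_x^1,\ldots$ ($v$ of them), $j_{\tau(x)}^1,\ldots$ ($v$ of them) are equal. Tracking this across all $x$: if $r\geq 1$ and $v\geq 1$, the presence of both $i$- and $j$-symbols in the block links the $\sigma$-orbit structure with the $\tau$-orbit structure, so the distinct values are indexed by the blocks of $\sigma\wedge\tau$, giving $N^{|\sigma\wedge\tau|}$ choices; if $r\geq 1,v=0$ only the $i$-coordinates appear, and they are constant along $\sigma$-cycles, giving $N^{|\sigma|}$; symmetrically $N^{|\tau|}$ if $r=0,v\geq 1$. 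In all three cases this is $N^{|\lambda|}$ with $\lambda$ as in the statement. Collecting the two contributions and the leftover normalization $N^{-r|\sigma|-v|\tau|}$ (what remains of $N^{-s(|\sigma|+|\tau|)}$ after the horizontal-string factor $N^{(s-r)|\sigma|+(s-v)|\tau|}$ is removed) yields
$$(M_\sigma\otimes M_\tau)(\Lambda_\pi)=N^{|\lambda|}\cdot N^{-r|\sigma|-v|\tau|}=N^{|\lambda|-r|\sigma|-v|\tau|},$$
as claimed.

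The main obstacle I expect is the middle step: making precise, at the level of index bookkeeping, why ``all $r$ upper and all $v$ lower block-legs being equal for every $x$, together with the $\sigma$- and $\tau$-twists relating position $x$ to $\sigma(x)$ and $\tau(x)$'' collapses to exactly the blocks of $\sigma\wedge\tau$ (resp.\ $\sigma$, resp.\ $\tau$). The subtlety is that within a fixed $x$ the symmetric block already identifies the $i$-value at $x$ with the $i$-value at $\sigma(x)$ and the $j$-value at $x$ with the $j$-value at $\tau(x)$ and all of these with each other; propagating these identifications over all $x$ generates the equivalence relation on $\{1,\ldots,p\}$ generated jointly by $\sigma$ and $\tau$, which is exactly $\sigma\wedge\tau$ when both families of legs are present, and degenerates to $\sigma$ or $\tau$ when one family is absent because then no cross-identification occurs. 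One should be a little careful that $r=0$ forces $v\geq 1$ (and vice versa), so that $r+v>0$ as required by Proposition 3.3(1); the three cases in the definition of $\lambda$ are therefore exhaustive. Everything else — the horizontal-string factor and the normalization arithmetic — is routine, along the lines of Proposition 3.4 and Theorem 3.5.
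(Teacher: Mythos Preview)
Your proposal is correct and follows essentially the same route as the paper: split the Kronecker constraints into the horizontal-string part (handled as in Proposition~3.4, yielding $N^{(s-r)|\sigma|+(s-v)|\tau|}$) and the symmetric-block part, where the common value $k_x$ at each $x$ is forced to satisfy $\ker k\leq\sigma$ when $r\geq1$ and $\ker k\leq\tau$ when $v\geq1$, giving $N^{|\lambda|}$ choices. The paper makes the auxiliary variables $k_1,\ldots,k_p$ explicit from the outset, whereas you describe the same mechanism in words, but the argument and arithmetic are identical.
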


\begin{proof}
Let us denote by $a_1,\ldots,a_r$ and $b_1,\ldots,b_v$ the upper and lower legs of the symmetric block, appearing at left, and by $A_1,\ldots,A_{s-r}$ and $B_1,\ldots,B_{s-v}$ the remaining legs, appearing at left as well. With this convention, the formula in Proposition 3.2 gives:
\begin{eqnarray*}
(M_\sigma\otimes M_\tau)(\Lambda_\pi)
&=&\frac{1}{n^{|\sigma|+|\tau|}}\sum_{i_1^1\ldots i_p^s}\sum_{j_1^1\ldots j_p^s}\prod_x\delta_{i_x^{a_1}\ldots i_x^{a_r}i_{\sigma(x)}^{a_1}\ldots i_{\sigma(x)}^{a_r}j_x^{b_1}\ldots j_x^{b_v}j_{\tau(x)}^{b_1}\ldots j_{\tau(x)}^{b_v}}\\
&&\hskip37mm\delta_{i_x^{A_1}i_{\sigma(x)}^{A_1}}\ldots\ldots\delta_{i_x^{A_{s-r}}i_{\sigma(x)}^{A_{s-r}}}\\
&&\hskip37mm\delta_{j_x^{B_1}j_{\tau(x)}^{B_1}}\ldots\ldots\delta_{j_x^{B_{s-v}}j_{\tau(x)}^{B_{s-v}}}
\end{eqnarray*}

If we denote by $k_1,\ldots,k_p$ the common values of the indices affected by the long Kronecker symbols, coming from the symmetric block, we have then:
\begin{eqnarray*}
(M_\sigma\otimes M_\tau)(\Lambda_\pi)
&=&\frac{1}{n^{|\sigma|+|\tau|}}\sum_{k_1\ldots k_p}\\
&&\sum_{i_1^1\ldots i_p^s}\prod_x\delta_{i_x^{a_1}\ldots i_x^{a_r}i_{\sigma(x)}^{a_1}\ldots i_{\sigma(x)}^{a_r}k_x}\cdot\delta_{i_x^{A_1}i_{\sigma(x)}^{A_1}}\ldots\delta_{i_x^{A_{s-r}}i_{\sigma(x)}^{A_{s-r}}}\\
&&\sum_{j_1^1\ldots j_p^s}\prod_x\delta_{j_x^{b_1}\ldots j_x^{b_v}j_{\tau(x)}^{b_1}\ldots j_{\tau(x)}^{b_v}k_x}\cdot\delta_{j_x^{B_1}j_{\tau(x)}^{B_1}}\ldots\delta_{j_x^{B_{s-v}}j_{\tau(x)}^{B_{s-v}}}
\end{eqnarray*}

Let us compute now the contributions of the various $i,j$ indices involved. If we regard both $i,j$ as being $p\times s$ arrays of indices, the situation is as follows:

-- On the $a_1,\ldots,a_r$ columns of $i$, the equations are $i_x^{a_e}=i_{\sigma(x)}^{a_e}=k_x$ for any $e,x$. Thus when $r\neq0$ we must have $\ker k\leq\sigma$, in order to have solutions, and if this condition is satisfied, the solution is unique. As for the case $r=0$, here there is no special condition to be satisfied by $k$, and we have once again a unique solution.

-- On the $A_1,\ldots,A_{s-r}$ columns of $i$, the conditions on the indices are the ``trivial'' ones, examined in the proof of Proposition 3.4 above. According to the computation there, the total contribution coming from these indices is $(N^{|\sigma|})^{s-r}=N^{(s-r)|\sigma|}$.

-- Regarding now $j$, the situation is similar, with a unique solution coming from the $b_1,\ldots,b_v$ columns, provided that the condition $\ker k\leq\tau$ is satisfied at $v\neq0$, and with a total $N^{(s-v)|\tau|}$ contribution coming from the $B_1,\ldots,B_{s-v}$ columns.

As a conclusion, in order to have solutions $i,j$, we are led to the condition $\ker k\leq\lambda$, where $\lambda\in\{\sigma\wedge\tau,\sigma,\tau\}$ is the partition constructed in the statement.

Now by putting everything together, we deduce that we have:
\begin{eqnarray*}
(M_\sigma\otimes M_\tau)(\Lambda_\pi)
&=&\frac{1}{n^{|\sigma|+|\tau|}}\sum_{\ker k\leq\lambda}N^{(s-r)|\sigma|+(s-v)|\tau|}\\
&=&N^{-s|\sigma|-s|\tau|}N^{|\lambda|}N^{(s-r)|\sigma|+(s-v)|\tau|}\\
&=&N^{|\lambda|-r|\sigma|-v|\tau|}
\end{eqnarray*}

Thus, we have obtained the formula in the statement, and we are done.
\end{proof}

In the two-block case now, we have a similar result, as follows:

\begin{proposition}
For a symmetric partition $\pi\in P_{even}(2s,2s)$, consisting of a symmetric union $\beta\sqcup\beta^\circ$ of two asymmetric blocks, completed with horizontal strings, we have
$$(M_\sigma\otimes M_\tau)(\Lambda_\pi)=N^{|\lambda|-(r+u)|\sigma|-(v+w)|\tau|}$$
where $r+u$ and $v+w$ represent the number of upper and lower legs of $\beta$, and where $\lambda\in P_p$ is a partition constructed according to the following table,
$$\begin{matrix}
ru\backslash vw&&11&10&01&00\\
\\
11&&\sigma^2\wedge\sigma\tau\wedge\sigma\tau^{-1}&\sigma^2\wedge\sigma\tau^{-1}&\sigma^2\wedge\sigma\tau&\sigma^2\\
10&&\sigma\tau\wedge\sigma\tau^{-1}&\sigma\tau^{-1}&\sigma\tau&\emptyset\\
01&&\tau\sigma\wedge\tau^2&\tau\sigma&\tau^{-1}\sigma&\emptyset\\
00&&\tau^2&\emptyset&\emptyset&-
\end{matrix}$$
with the $1/0$ indexing symbols standing for the positivity/nullness of the corresponding variables $r,u,v,w$, and where $\emptyset$ denotes a formal partition, having $0$ blocks.
\end{proposition}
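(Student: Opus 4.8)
The plan is to run the argument of Proposition 4.1 one more time, the new feature being that the two blocks $\beta$ and $\beta^\circ$ are now genuinely distinct, so that the Kronecker condition factors as $\delta_\pi=\delta_\beta\cdot\delta_{\beta^\circ}$ times the contributions of the horizontal strings. First I would fix notation: let $A\subset\{1,\dots,s\}$ with $|A|=r$ be the set of upper-left columns occupied by $\beta$, and similarly let $C$ (with $|C|=u$), $B$ (with $|B|=v$), $D$ (with $|D|=w$) be its sets of upper-right, lower-left, lower-right columns; since $\beta\cap\beta^\circ=\emptyset$, the sets $A,C$ are disjoint and $B,D$ are disjoint, and the remaining $s-r-u$ upper columns and $s-v-w$ lower columns carry the strings of the standard pairing $\eta$. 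Starting from the formula in Proposition 3.2, in which every $\delta_{\pi^{e_x}}$ equals $\delta_\pi$ because $\pi=\pi^\circ$, the quantity $(M_\sigma\otimes M_\tau)(\Lambda_\pi)$ becomes $n^{-|\sigma|-|\tau|}$ times the number of double arrays $[\,i,j\,]$ making all the Kronecker symbols equal to $1$.

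The core step is a column-by-column analysis, as in Proposition 4.1, the difference being that the asymmetry of $\beta$ forces the pairing between the legs of $\beta$ and those of $\beta^\circ$ to ``cross the middle''. Writing $k_x$ for the common value imposed by $\delta_\beta$ at step $x$ and $l_x$ for the one imposed by $\delta_{\beta^\circ}$ at step $x$, one finds: on an upper-left column $e\in A$, $\delta_\beta$ gives $i^e=k$ while the matching upper-right leg of $\beta^\circ$ gives $i^e=l\circ\sigma^{-1}$, hence $l=k\sigma$; on an upper-right column $e\in C$ one gets $l=k\sigma^{-1}$; on a lower-left column $e\in B$ one gets $l=k\tau$; and on a lower-right column $e\in D$ one gets $l=k\tau^{-1}$. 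Thus $l$ is determined by $k$ through any one of the active relations, and the joint validity of the active relations is equivalent to $k$ being constant on the blocks of a partition $\lambda$ recording the coincidences forced among the $k_x$ — namely $\lambda$ is the join (written $\wedge$ in the table) of the orbit partitions of all products $\rho\,(\rho')^{-1}$ of occurring relations $l=k\rho$, $l=k\rho'$. Running through the sixteen combinations of positivities of $r,u,v,w$, and discarding the relations redundant given the others (for instance, when $r,u,v\geq1$ the relation coming from the pair $u,v$ follows from those coming from $r,u$ and $r,v$), one obtains exactly the partitions in the statement: the products $\sigma^2,\sigma\tau,\sigma\tau^{-1},\tau\sigma,\tau^2$ appear precisely as indicated. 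The four boxes in which only one of $r,u,v,w$ is positive produce a single relation $l=k\rho$, hence no constraint at all between distinct $k_x$'s, so the counting degenerates and these are recorded by the formal symbol $\emptyset$; the box $r=u=v=w=0$, which would force $\beta$ empty, is marked $-$.

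To conclude I would assemble the count. For each of the $N^{|\lambda|}$ admissible functions $k$, the $i$-entries on the $r+u$ active upper columns and the $j$-entries on the $v+w$ active lower columns are all determined by $k$ (equal to $k$, $k\sigma$, $k\sigma^{-1}$ or $k\tau^{\pm1}$ according to the column), while the $s-r-u$ upper and $s-v-w$ lower string columns contribute the factor $N^{(s-r-u)|\sigma|+(s-v-w)|\tau|}$ exactly as computed in the proof of Proposition 3.4. Multiplying by the normalization $n^{-|\sigma|-|\tau|}=N^{-s|\sigma|-s|\tau|}$ then gives $(M_\sigma\otimes M_\tau)(\Lambda_\pi)=N^{|\lambda|-(r+u)|\sigma|-(v+w)|\tau|}$, as claimed.

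I expect the main obstacle to be purely bookkeeping: keeping track, for each of the sixteen boxes, of which of $\sigma,\sigma^{-1},\tau,\tau^{-1}$ occur and which resulting products survive after removing redundancies, and at the same time checking that the ``active'' columns and the ``string'' columns genuinely partition the $s$ columns of each row, so that no contribution is double-counted or omitted. No conceptually new ingredient beyond Propositions 3.2 and 3.4 and the crossing phenomenon above should be required.
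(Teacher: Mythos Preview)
Your outline is correct and is essentially the paper's own proof: introduce the two common values $k_x,l_x$ coming from $\delta_\beta$ and $\delta_{\beta^\circ}$, read off the relations $l=k\sigma$, $l=k\sigma^{-1}$, $l=k\tau$, $l=k\tau^{-1}$ according to which of $r,u,v,w$ are positive, eliminate $l$ to get the constraint $\ker k\leq\lambda$, and then combine with the horizontal-string count $N^{(s-r-u)|\sigma|+(s-v-w)|\tau|}$ exactly as you describe. Your packaging of the sixteen cases via the products $\rho(\rho')^{-1}$ is a tidy way to organize what the paper does by an explicit $4\times4$ merge, but the argument is the same.
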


\begin{proof}
Let us denote by $a_1,\ldots,a_r$ and $c_1,\ldots,c_u$ the upper legs of $\beta$, by $b_1,\ldots,b_v$ and $d_1,\ldots,d_w$ the lower legs of $\beta$, and by $A_1,\ldots,A_{s-r-u}$ and $B_1,\ldots,B_{s-v-w}$ the remaining legs of $\pi$, not belonging to $\beta\sqcup\beta^\circ$. The formula in Proposition 3.2 gives:
\begin{eqnarray*}
(M_\sigma\otimes M_\tau)(\Lambda_\pi)
&=&\frac{1}{n^{|\sigma|+|\tau|}}\sum_{i_1^1\ldots i_p^s}\sum_{j_1^1\ldots j_p^s}\prod_x\delta_{i_x^{a_1}\ldots i_x^{a_r}i_{\sigma(x)}^{c_1}\ldots i_{\sigma(x)}^{c_u}j_x^{b_1}\ldots j_x^{b_v}j_{\tau(x)}^{d_1}\ldots j_{\tau(x)}^{d_w}}\\
&&\hskip37mm\delta_{i_x^{c_1}\ldots i_x^{c_u}i_{\sigma(x)}^{a_1}\ldots i_{\sigma(x)}^{a_r}j_x^{d_1}\ldots j_x^{d_w}j_{\tau(x)}^{b_1}\ldots j_{\tau(x)}^{b_v}}\\
&&\hskip37mm\delta_{i_x^{A_1}i_{\sigma(x)}^{A_1}}\ldots\ldots\delta_{i_x^{A_{s-r}}i_{\sigma(x)}^{A_{s-r-u}}}\\
&&\hskip37mm\delta_{j_x^{B_1}j_{\tau(x)}^{B_1}}\ldots\ldots\delta_{j_x^{B_{s-v}}j_{\tau(x)}^{B_{s-v-w}}}
\end{eqnarray*}

We have now two long Kronecker symbols, coming from $\beta\sqcup\beta^\circ$, and if we denote by $k_1,\ldots,k_p$ and $l_1,\ldots,l_p$ the values of the indices affected by them, we obtain:
\begin{eqnarray*}
&&(M_\sigma\otimes M_\tau)(\Lambda_\pi)
=\frac{1}{n^{|\sigma|+|\tau|}}\sum_{k_1\ldots k_p}\sum_{l_1\ldots l_p}\\
&&\hskip20mm\sum_{i_1^1\ldots i_p^s}\prod_x\delta_{i_x^{a_1}\ldots i_x^{a_r}i_{\sigma(x)}^{c_1}\ldots i_{\sigma(x)}^{c_u}k_x}\cdot\delta_{i_x^{c_1}\ldots i_x^{c_u}i_{\sigma(x)}^{a_1}\ldots i_{\sigma(x)}^{a_r}l_x}\cdot\delta_{i_x^{A_1}i_{\sigma(x)}^{A_1}}\ldots\delta_{i_x^{A_{s-r-u}}i_{\sigma(x)}^{A_{s-r-u}}}\\
&&\hskip20mm\sum_{j_1^1\ldots j_p^s}\prod_x\delta_{j_x^{b_1}\ldots j_x^{b_v}j_{\tau(x)}^{d_1}\ldots j_{\tau(x)}^{d_w}k_x}\cdot\delta_{j_x^{d_1}\ldots j_x^{d_w}j_{\tau(x)}^{b_1}\ldots j_{\tau(x)}^{b_v}l_x}\cdot\delta_{j_x^{B_1}j_{\tau(x)}^{B_1}}\ldots\delta_{j_x^{B_{s-v-w}}j_{\tau(x)}^{B_{s-v-w}}}
\end{eqnarray*}

Let us compute now the contributions of the various $i,j$ indices. On the $a_1,\ldots,a_r$ and $c_1,\ldots,c_u$ columns of $i$, regarded as an $p\times s$ array, the equations are as follows:
$$i_x^{a_e}=i_{\sigma(x)}^{c_f}=k_x\quad,\quad i_x^{c_f}=i_{\sigma(x)}^{a_e}=l_x$$

If we denote by $i_x$ the common value of the $i_x^{a_e}$ indices, when $e$ varies, and by $I_x$ the common value of the $i_x^{c_f}$ indices, when $f$ varies, these equations simply become:
$$i_x=I_{\sigma(x)}=k_x\quad,\quad I_x=i_{\sigma(x)}=l_x$$

Thus we have 0 or 1 solutions. To be more precise, depending now on the positivity/nullness of the parameters $r,u$, we are led to 4 cases, as follows:

\underline{Case 11.} Here $r,u\geq1$, and we must have $k_x=l_{\sigma(x)},k_{\sigma(x)}=l_x$.

\underline{Case 10.} Here $r\geq1,u=0$, and we must have $k_{\sigma(x)}=l_x$.

\underline{Case 01.} Here $r=0,u\geq1$, and we must have $k_x=l_{\sigma(x)}$.

\underline{Case 00.} Here $r=u=0$, and there is no condition on $k,l$.

In what regards now the $A_1,\ldots,A_{s-r}$ columns of $i$, the conditions on the indices are the ``trivial'' ones, examined in the proof of Proposition 3.4 above. According to the computation there, the total contribution coming from these indices is:
$$C_i=(N^{|\sigma|})^{s-r}=N^{(s-r)|\sigma|}$$

The study for the $j$ indices is similar, and we will only record here the final conclusions. First, in what regards the $b_1,\ldots,b_v$ and $d_1,\ldots,d_w$ columns of $j$, the same discussion as above applies, and we have once again 0 or 1 solutions, as follows:

\underline{Case 11'.} Here $v,w\geq1$, and we must have $k_x=l_{\tau(x)},k_{\tau(x)}=l_x$.

\underline{Case 10'.} Here $v\geq1,w=0$, and we must have $k_{\tau(x)}=l_x$.

\underline{Case 01'.} Here $v=0,w\geq1$, and we must have $k_x=l_{\tau(x)}$.

\underline{Case 00'.} Here $v=w=0$, and there is no condition on $k,l$.

As for the $B_1,\ldots,B_{s-v-w}$ columns of $j$, the conditions on the  indices here are ``trivial'', as in Proposition 3.4, and the total contribution coming from these indices is:
$$C_j=(N^{|\tau|})^{s-v-w}=N^{(s-v-w)|\tau|}$$

Let us put now everything together. First, we must merge the conditions on $k,l$ found in the cases 00-11 above with those found in the cases 00'-11'. There are $4\times4=16$ computations to be performed here, and the ``generic'' computation, corresponding to the merger of case 11 with the case 11', is as follows:
\begin{eqnarray*}
&&k_x=l_{\sigma(x)},k_{\sigma(x)}=l_x,k_x=l_{\tau(x)},k_{\tau(x)}=l_x\\
&\iff&l_x=k_{\sigma(x)},k_x=l_{\sigma(x)},k_x=l_{\tau(x)},k_x=l_{\tau^{-1}(x)}\\
&\iff&l_x=k_{\sigma(x)},k_x=k_{\sigma^2(x)}=k_{\sigma\tau(x)}=k_{\sigma\tau^{-1}(x)}
\end{eqnarray*}

Thus in this case $l$ is uniquely determined by $k$, and $k$ itself must satisfy:
$$\ker k\leq\sigma^2\wedge\sigma\tau\wedge\sigma\tau^{-1}$$

We conclude that the total contribution of the $k,l$ indices in this case is:
$$C_{kl}^{11,11}=N^{|\sigma^2\wedge\sigma\tau\wedge\sigma\tau^{-1}|}$$

In the remaining 15 cases the computations are similar, with some of the above 4 conditions, that we started with, dissapearing. The conclusion is that the total contribution of the $k,l$ indices is as follows, with $\lambda$ being the partition in the statement:
$$C_{kl}=N^{|\lambda|}$$

With this result in hand, we can now finish our computation, as follows:
\begin{eqnarray*}
(M_\sigma\otimes M_\tau)(\Lambda_\pi)
&=&\frac{1}{n^{|\sigma|+|\tau|}}C_{kl}C_iC_j\\
&=&N^{|\lambda|-(r+u)|\sigma|-(v+w)|\tau|}
\end{eqnarray*}

Thus, we have obtained the formula in the statement, and we are done.
\end{proof}

As a conclusion now, we have the following result:

\begin{theorem}
For a symmetric partition $\pi\in P_{even}(2s,2s)$, having only one component, in the sense of Proposition 3.3, completed with horizontal strings, we have
$$(M_\sigma\otimes M_\tau)(\Lambda_\pi)=N^{|\lambda|-r|\sigma|-v|\tau|}$$
where $\lambda\in P_p$ is the partition constructed as in Proposition 4.1 and Proposition 4.2, and where $r/v$ is half of the total number of upper/lower legs of the component.
\end{theorem}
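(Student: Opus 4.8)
The plan is to consolidate Proposition 4.1 and Proposition 4.2, using the dichotomy supplied by Proposition 3.3. A symmetric partition $\pi\in P_{even}(2s,2s)$ having only one component, completed with horizontal strings, must be of exactly one of the two types described in Proposition 3.3: either $\pi$ consists of a single symmetric block, plus horizontal strings, or $\pi$ consists of a single union $\beta\sqcup\beta^\circ$ of two genuinely asymmetric blocks, plus horizontal strings. These are precisely the two situations already treated above, so the first thing I would do is split along this dichotomy.

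In the first case, Proposition 4.1 gives directly $(M_\sigma\otimes M_\tau)(\Lambda_\pi)=N^{|\lambda|-r|\sigma|-v|\tau|}$, with $\lambda\in\{\sigma\wedge\tau,\sigma,\tau\}$ constructed as there, and with $r,v$ equal to half the number of upper, respectively lower, legs of the symmetric block. Since here the component \emph{is} that block, $r$ and $v$ are also half the total number of upper/lower legs of the component, so this case is already in the desired normal form. In the second case, Proposition 4.2 gives $(M_\sigma\otimes M_\tau)(\Lambda_\pi)=N^{|\lambda|-(r+u)|\sigma|-(v+w)|\tau|}$, where $r+u$ and $v+w$ count the upper and lower legs of $\beta$ alone, and $\lambda$ is read off the $16$-entry table. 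The one point that genuinely needs a line of argument is that $r+u$ equals half the total number of upper legs of the full component $\beta\sqcup\beta^\circ$, and likewise $v+w=\frac12\#\{\text{lower legs of }\beta\sqcup\beta^\circ\}$. This is immediate from the fact that the blockwise middle symmetry $\pi\to\pi^\circ$ restricts to a bijection of the upper row which interchanges the two length-$s$ halves and carries $\beta$ onto $\beta^\circ$; hence $\beta^\circ$ has exactly as many upper legs as $\beta$, namely $r+u$, so the two blocks together have $2(r+u)$ upper legs, and symmetrically $2(v+w)$ lower legs. Writing ``$r$''$\,:=r+u$ and ``$v$''$\,:=v+w$ then puts this case in the same normal form, and the stated formula follows.

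I do not expect any real obstacle here: this is a bookkeeping statement whose only content is matching the per-block leg counts of Propositions 4.1--4.2 against the per-component leg count in the theorem. The degenerate sub-cases ($r=0$, or $v=0$, or the ``$\emptyset$'' entries in the table of Proposition 4.2, which correspond to $(M_\sigma\otimes M_\tau)(\Lambda_\pi)$ vanishing) are already absorbed into the statements of those two propositions, so nothing further is required; the proof is simply the remark that Proposition 3.3 leaves exactly these two alternatives, each covered by one of Proposition 4.1, Proposition 4.2.
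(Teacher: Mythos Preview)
Your argument is correct and is exactly the paper's approach: the paper's own proof is the single sentence ``This follows indeed from Proposition 4.1 and Proposition 4.2 above,'' and you have simply spelled out the case split from Proposition 3.3 and the matching of the leg counts. One small inaccuracy in your parenthetical: the $\emptyset$ entries in the table of Proposition 4.2 do not correspond to $(M_\sigma\otimes M_\tau)(\Lambda_\pi)$ vanishing, but to the formal convention $|\emptyset|=0$; this does not affect your proof.
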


\begin{proof}
This follows indeed from Proposition 4.1 and Proposition 4.2 above.
\end{proof}

\section{Exclusion work}

Generally speaking, the formula in Theorem 4.3 does not lead to the multiplicativity condition from Definition 1.7, and this due to the fact that the various partitions $\lambda\in P_p$ constructed in Proposition 4.2 have in general a quite complicated combinatorics. 

To be more precise, we first have the following result:

\begin{proposition}
For a symmetric partition $\pi\in P_{even}(2s,2s)$ we have
$$(M_\sigma\otimes M_\tau)(\Lambda_\pi)=N^{f_1+f_2}$$
where $f_1,f_2$ are respectively linear combinations of the following quantities:
\begin{enumerate}
\item $1,|\sigma|,|\tau|,|\sigma\wedge\tau|,|\sigma\tau|,|\sigma\tau^{-1}|,|\tau\sigma|,|\tau^{-1}\sigma|$.

\item $|\sigma^2|,|\tau^2|,|\sigma^2\wedge\sigma\tau|,|\sigma^2\wedge\sigma\tau^{-1}|,|\tau\sigma\wedge\tau^2|,|\sigma\tau\wedge\sigma\tau^{-1}|,|\sigma^2\wedge\sigma\tau\wedge\sigma\tau^{-1}|$.
\end{enumerate}
\end{proposition}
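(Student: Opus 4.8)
The plan is to reduce to the single–component case via Theorem 3.5, and then feed in the explicit formulas from Proposition 4.1 and Proposition 4.2. Concretely, given a symmetric partition $\pi\in P_{even}(2s,2s)$, Proposition 3.3 produces a finest symmetric decomposition $\pi=[\pi_1,\ldots,\pi_R]$ into components, each of which is either a single symmetric block or a union $\beta\sqcup\beta^\circ$ of two asymmetric blocks. By Theorem 3.5 we then have $(M_\sigma\otimes M_\tau)(\Lambda_\pi)=\prod_{t=1}^R(M_\sigma\otimes M_\tau)(\Lambda_{\pi_t})$, where each $\pi_t$ is completed with horizontal strings from the standard pairing $\eta$.

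Next I would substitute the single–component evaluations. By Theorem 4.3 (which packages Proposition 4.1 and Proposition 4.2) each factor has the shape $(M_\sigma\otimes M_\tau)(\Lambda_{\pi_t})=N^{|\lambda_t|-r_t|\sigma|-v_t|\tau|}$, where $\lambda_t\in P_p$ is one of the partitions listed in the tables of those propositions, and $r_t,v_t$ are (half the) numbers of upper/lower legs of the component. Taking the product over $t$ gives
$$(M_\sigma\otimes M_\tau)(\Lambda_\pi)=N^{\sum_{t=1}^R\left(|\lambda_t|-r_t|\sigma|-v_t|\tau|\right)},$$
so the exponent is a sum of three kinds of terms: the quantities $|\lambda_t|$, the multiples $-r_t|\sigma|$, and the multiples $-v_t|\tau|$. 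I would now simply classify the possible $|\lambda_t|$ by inspecting the two propositions. From Proposition 4.1 the only $\lambda_t$ that occur are $\sigma\wedge\tau$, $\sigma$, $\tau$, contributing $|\sigma\wedge\tau|,|\sigma|,|\tau|$ respectively (or $1$ when the component has no legs on one side, via the horizontal-string normalization). From the $4\times4$ table in Proposition 4.2 the $\lambda_t$ that occur are exactly $\sigma^2$, $\tau^2$, $\sigma\tau$, $\sigma\tau^{-1}$, $\tau\sigma$, $\tau^{-1}\sigma$, $\sigma^2\wedge\sigma\tau$, $\sigma^2\wedge\sigma\tau^{-1}$, $\tau\sigma\wedge\tau^2$, $\sigma\tau\wedge\sigma\tau^{-1}$, $\sigma^2\wedge\sigma\tau\wedge\sigma\tau^{-1}$, together with the formal empty partition $\emptyset$ (contributing $0=|\emptyset|$) in the off-diagonal "mixed positivity" slots. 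The terms $-r_t|\sigma|$ and $-v_t|\tau|$ are integer multiples of $|\sigma|$ and $|\tau|$, hence absorbed into the list (1). Grouping the contributions of single-block components and, say, the $|\sigma|,|\tau|,|\sigma\wedge\tau|,|\sigma\tau|,|\sigma\tau^{-1}|,|\tau\sigma|,|\tau^{-1}\sigma|,1$ pieces into $f_1$, and the genuinely "quadratic" pieces $|\sigma^2|,|\tau^2|,|\sigma^2\wedge\sigma\tau|,\ldots$ into $f_2$, yields the claimed decomposition $(M_\sigma\otimes M_\tau)(\Lambda_\pi)=N^{f_1+f_2}$.

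The routine part is the bookkeeping: checking that every entry of the Proposition 4.2 table, after adding the $-(r+u)|\sigma|-(v+w)|\tau|$ shift, lands in one of the two lists. The only mild subtlety — and the place I would be most careful — is the normalization of components with legs on only one side (the "$v=0$" or "$r=0$" rows/columns), where $\lambda$ degenerates to $\sigma$ or $\tau$ or to the formal $\emptyset$, and where the horizontal-string completion from Theorem 3.5 must be accounted for so that the exponent contribution is correctly $|\sigma|$, $|\tau|$, or a constant lying in list (1); this is exactly the content already verified in Proposition 3.4 and in the proofs of Propositions 4.1–4.2, so no new work is needed, only careful invocation. There is no real obstacle: the statement is essentially a repackaging of Theorems 3.5 and 4.3, and the proof is one line — "this follows from Theorem 3.5 together with Proposition 4.1 and Proposition 4.2" — plus the observation that the finitely many partitions $\lambda$ arising there are precisely those enumerated in (1) and (2).
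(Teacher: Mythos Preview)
Your proposal is correct and follows essentially the same approach as the paper: decompose via Theorem 3.5, apply Theorem 4.3 (i.e., Propositions 4.1 and 4.2) to each symmetric component, and read off the finitely many possible $|\lambda_t|$ from the tables. The paper's own proof is the one-liner you anticipated; your added bookkeeping is accurate, though the parenthetical about a ``$1$'' arising from components with no legs on one side is slightly off (that degenerate case does not occur for a genuine component by Proposition 3.3, and the constant term in list (1) is there simply to absorb the $|\emptyset|=0$ contributions and any constants).
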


\begin{proof}
This follows indeed by combining Theorem 3.5 and Theorem 4.3 above, with concrete input from Proposition 4.1 and Proposition 4.2.
\end{proof}

In the above result, the partitions in (1) lead to the multiplicativity condition in Definition 1.7, and so to compound free Poisson laws, via Theorem 1.8. However, the partitions in (2) have a more complicated combinatorics, which does not fit with Definition 1.7 above, nor with the finer multiplicativity notions introduced in \cite{bn2}.

Summarizing, in order to extend the 4 basic computations from section 2, we must fine-tune our formalism. A natural answer here comes from the following result:

\begin{proposition}
For a partition $\pi\in P(2s,2s)$, the following are equivalent:
\begin{enumerate}
\item $\varphi_\pi$ is unital modulo scalars, i.e. $\varphi_\pi(1)=c1$, with $c\in\mathbb C$.

\item $[^\mu_\pi]=\mu$, where $\mu\in P(0,2s)$ is the pairing connecting $\{i\}-\{i+s\}$, and where $[^\mu_\pi]\in P(0,2s)$ is the partition obtained by putting $\mu$ on top of $\pi$.
\end{enumerate}
In addition, these conditions are satisfied for the $4$ partitions in $P_{even}(2,2)$.
\end{proposition}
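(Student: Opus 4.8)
The plan is to compute $\varphi_\pi(1)$ directly from Definition 2.3 and to recognize the answer as the capping of $\pi$ by $\mu$. Writing the identity of $M_{N^s}(\mathbb C)$ as $1=\sum_{a_1\ldots a_s}e_{a_1\ldots a_s,a_1\ldots a_s}$ and inserting this into the defining formula of $\varphi_\pi$, I get
$$\varphi_\pi(1)=\sum_{b_1\ldots b_s,d_1\ldots d_s}\left(\sum_{a_1\ldots a_s}\delta_\pi\begin{pmatrix}a_1&\ldots&a_s&a_1&\ldots&a_s\\ b_1&\ldots&b_s&d_1&\ldots&d_s\end{pmatrix}\right)e_{b_1\ldots b_s,d_1\ldots d_s}.$$
The first key step is the observation that, since $\delta_\mu(u_1,\ldots,u_{2s})=1$ precisely when $u_i=u_{i+s}$ for all $i$, the inner sum over $a$ is exactly the Kronecker sum obtained by gluing the $2s$ points of $\mu$ onto the upper $2s$ points of $\pi$; by the standard composition rule for partition diagrams this equals $N^{c}\,\delta_{[^\mu_\pi]}(b_1\ldots b_s,d_1\ldots d_s)$, where $c\ge0$ counts the blocks that become internal and are erased. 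Hence $\varphi_\pi(1)=N^{c}\sum_{b,d}\delta_{[^\mu_\pi]}(b,d)\,e_{b,d}$, and the precise value of $c$ will play no further role.

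The second step is a small algebraic lemma: for $\rho\in P(0,2s)$ and $N\ge2$, the matrix $\sum_{b,d}\delta_\rho(b_1\ldots b_s,d_1\ldots d_s)\,e_{b_1\ldots b_s,d_1\ldots d_s}\in M_{N^s}(\mathbb C)$ is a scalar multiple of $1$ if and only if $\rho=\mu$, in which case the scalar is $1$. The ``if'' direction is immediate, since $\delta_\mu(b,d)=\prod_i\delta_{b_id_i}$. For ``only if'' I would first note this matrix is never the zero matrix: its $((x,\ldots,x),(x,\ldots,x))$ entry is $\delta_\rho$ evaluated on a constant array, hence equals $1$; since all entries lie in $\{0,1\}$, being a scalar multiple of $1$ forces the matrix to equal $1$, i.e.\ all diagonal entries are $1$ and all off-diagonal entries vanish. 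Having all diagonal entries equal to $1$ forces, because $N\ge2$, every block of $\rho$ to lie inside some pair $\{i,i+s\}$; having all off-diagonal entries equal to $0$ then forces each such pair to actually be a block. Thus $\rho=\mu$. Combining the two steps, $\varphi_\pi(1)\in\mathbb C\cdot1$ if and only if $[^\mu_\pi]=\mu$, which is the claimed equivalence of (1) and (2).

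For the last assertion I would simply invoke the explicit maps from Proposition 2.5: $\varphi_1(1)=1$, $\varphi_2(1)=1^t=1$, $\varphi_3(1)=Tr(1)\,1=N\cdot1$ and $\varphi_4(1)=1^\delta=1$, so all four are unital modulo scalars (equivalently, capping each of $\pi_1,\pi_2,\pi_3,\pi_4$ with $\mu$ on top visibly returns $\mu$). I expect the only genuine subtlety to be the ``only if'' half of the algebraic lemma, namely the careful use of $N\ge2$ to convert ``$\delta_\rho$ is identically $1$ on the diagonal and identically $0$ off it'' into the exact block structure $\rho=\mu$ (the degenerate case $N=1$, where $M_{N^s}=\mathbb C$, being excluded); the identification of the inner $a$-sum with the capped diagram $[^\mu_\pi]$ is routine once the composition rule for partitions is recalled.
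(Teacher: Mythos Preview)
Your proof is correct. The overall architecture matches the paper's: compute $\varphi_\pi(1)$ explicitly, relate the resulting coefficients to the capped partition $[^\mu_\pi]$, and then characterize when the outcome is a scalar matrix. The implementation differs in one place, however. You invoke the categorical composition rule to identify the inner sum $\sum_a\delta_\pi(^{aa}_{bd})$ directly with $N^{c}\,\delta_{[^\mu_\pi]}(b,d)$, and then prove a clean self-contained lemma saying that $\sum_{b,d}\delta_\rho(b,d)e_{b,d}$ is a scalar iff $\rho=\mu$. The paper instead proves the \emph{sup} identity $\sup_a\delta_\pi(^{aa}_{bd})=\delta_{[^\mu_\pi]}(b,d)$ and proceeds in two stages: first it shows that $\varphi_\pi(1)$ is diagonal iff $[^\mu_\pi]\leq\mu$, and then it argues, via a monotonicity-in-$\ker b$ observation on the diagonal entries, that constancy of the diagonal strengthens this to $[^\mu_\pi]=\mu$. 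Your route is a bit slicker because the composition rule packages the $a$-sum in one stroke; the paper's route is a bit more elementary in that it never appeals to that rule, at the cost of the extra diagonal/constant-diagonal split. Your careful remark that $N\geq2$ is needed for the ``only if'' half of the lemma is exactly the right caveat, and your verification of the four $P_{even}(2,2)$ cases via Proposition 2.5 is the same as the paper's.
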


\begin{proof}
We use the formula of $\varphi_\pi$ from Definition 2.3 above, namely:
$$\varphi_\pi(e_{a_1\ldots a_s,c_1\ldots c_s})=\sum_{b_1\ldots b_s}\sum_{d_1\ldots d_s}\delta_\pi\begin{pmatrix}a_1&\ldots&a_s&c_1&\ldots&c_s\\ b_1&\ldots&b_s&d_1&\ldots&d_s\end{pmatrix}e_{b_1\ldots b_s,d_1\ldots d_s}$$

By summing over indices $a_i=c_i$, we obtain the following formula:
$$\varphi_\pi(1)=\sum_{a_1\ldots a_s}\sum_{b_1\ldots b_s}\sum_{d_1\ldots d_s}
\delta_\pi\begin{pmatrix}a_1&\ldots&a_s&a_1&\ldots&a_s\\ b_1&\ldots&b_s&d_1&\ldots&d_s\end{pmatrix}e_{b_1\ldots b_s,d_1\ldots d_s}$$

Let us first find out when $\varphi_\pi(1)$ is diagonal. In order for this condition to hold, the off-diagonal terms of $\varphi_\pi(1)$ must all vanish, and so we must have:
$$b\neq d\implies\delta_\pi\begin{pmatrix}a_1&\ldots&a_s&a_1&\ldots&a_s\\ b_1&\ldots&b_s&d_1&\ldots&d_s\end{pmatrix}=0,\forall a$$

Our claim is that for any $\pi\in P(2s,2s)$ we have the following formula:
$$\sup_{a_1\ldots a_s}\delta_\pi\begin{pmatrix}a_1&\ldots&a_s&a_1&\ldots&a_s\\ b_1&\ldots&b_s&d_1&\ldots&d_s\end{pmatrix}=\delta_{[^\mu_\pi]}\begin{pmatrix}b_1&\ldots&b_s&d_1&\ldots&d_s\end{pmatrix}$$

Indeed, each of the terms of the sup on the left are smaller than the quantity on the right, so $\leq$ holds. Also, assuming $\delta_{[^\mu_\pi]}(bd)=1$, we can take $a_1,\ldots,a_s$ to be the indices appearing on the strings of $\mu$, and we obtain $\delta_\pi(^a_b{\ }^a_d)=1$. Thus, we have equality.

Now with this equality in hand, we conclude that we have:
\begin{eqnarray*}
&&\varphi_\pi(1)=\varphi_\pi(1)^\delta\\
&\iff&\delta_{[^\mu_\pi]}\begin{pmatrix}b_1&\ldots&b_s&d_1&\ldots&d_s\end{pmatrix}=0,\forall b\neq d\\
&\iff&\delta_{[^\mu_\pi]}\begin{pmatrix}b_1&\ldots&b_s&d_1&\ldots&d_s\end{pmatrix}\leq\delta_\mu\begin{pmatrix}b_1&\ldots&b_s&d_1&\ldots&d_s\end{pmatrix},\forall b,d\\
&\iff&\begin{bmatrix}\mu\\ \pi\end{bmatrix}\leq\mu
\end{eqnarray*}

Let us investigate now when (1) holds. We already know that $\pi$ must satisfy $[^\mu_\pi]\leq\mu$, and the remaining conditions, concerning the diagonal terms, are as follows:
$$\sum_{a_1\ldots a_s}\delta_\pi\begin{pmatrix}a_1&\ldots&a_s&a_1&\ldots&a_s\\ b_1&\ldots&b_s&b_1&\ldots&b_s\end{pmatrix}=c,\forall b$$

As a first observation, the quantity on the left is a decreasing function of $\lambda=\ker b$. Now in order for this decreasing function to be constant, we must have:
$$\sum_{a_1\ldots a_s}\delta_\pi\begin{pmatrix}a_1&\ldots&a_s&a_1&\ldots&a_s\\ 1&\ldots&s&1&\ldots&s\end{pmatrix}=\sum_{a_1\ldots a_s}\delta_\pi\begin{pmatrix}a_1&\ldots&a_s&a_1&\ldots&a_s\\ 1&\ldots&1&1&\ldots&1\end{pmatrix}$$

We conclude that the condition $[^\mu_\pi]\leq\mu$ must be strengthened into $[^\mu_\pi]=\mu$, as claimed. Finally, the last assertion is clear, by using either (1) or (2).
\end{proof}

In the symmetric case, $\pi=\pi^\circ$, we have the following result:

\begin{proposition}
Given a partition $\pi\in P(2s,2s)$ which is symmetric, $\varphi_\pi$ is unital modulo scalars precisely when its symmetric components are as follows,
\begin{enumerate}
\item Symmetric blocks with $v\leq 1$,

\item Unions of asymmetric blocks with $r+u=0,v+w=1$,

\item Unions of asymmetric blocks with $r+u\geq1,v+w\leq1$,
\end{enumerate}
with the conventions from Proposition 3.3 for the values of $r,u,v,w$.
\end{proposition}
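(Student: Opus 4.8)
The plan is to start from Proposition 5.2, which identifies ``$\varphi_\pi$ unital modulo scalars'' with the combinatorial identity $[^\mu_\pi]=\mu$, where $\mu\in P(0,2s)$ is the pairing $\{i\}-\{i+s\}$. So the task becomes that of reading off the partition $[^\mu_\pi]$ of the lower row from the symmetric decomposition $\pi=[\pi_1,\ldots,\pi_R]$ of Proposition 3.3, and comparing it with $\mu$.

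The first step is to localize the computation over components. The key observation is that $\mu$ joins each upper leg $i$ to the leg $i+s$, which is precisely its image under the blockwise middle symmetry $\circ$; and each component $\pi_t$ is $\circ$-stable, a symmetric block being fixed by $\circ$ and a union $\beta\sqcup\beta^\circ$ being permuted by it. Hence every string of $\mu$ joins two upper legs lying in the same component, so that capping $\pi$ with $\mu$ respects the components: $[^\mu_\pi]$ is the disjoint union of the partitions $[^\mu_{\pi_t}]$, each obtained by capping $\pi_t$ with the restriction of $\mu$ to its (automatically $\circ$-closed) upper legs, and for the same reason $\mu$ itself is the disjoint union of its restrictions $\mu_t$ to the lower legs of the various $\pi_t$. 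Therefore $[^\mu_\pi]=\mu$ holds if and only if $[^\mu_{\pi_t}]=\mu_t$ for every $t$, and it suffices to discuss a single component.

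Next comes the case analysis, following the two types of components in Proposition 3.3. If $\pi_t$ is a single symmetric block, all its legs already form one block of $\pi$, so its $2v$ lower legs (coming in $v$ many $\circ$-matched pairs) all end up in one block of $[^\mu_{\pi_t}]$; this equals $\mu_t$, which splits those legs into the $v$ matched pairs, precisely when $2v\leq2$, i.e. $v\leq1$, which is case (1) (for $v=1$ the single lower block is exactly a string of $\mu_t$, and for $v=0$ there is nothing to check). If $\pi_t=\beta\sqcup\beta^\circ$ with $\beta$ carrying at least one upper leg, the string of $\mu$ on that leg fuses $\beta$ with $\beta^\circ$, so the $2(v+w)$ lower legs all end up in one block of $[^\mu_{\pi_t}]$, which equals $\mu_t$ exactly when $v+w\leq1$, which is case (3). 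Finally, if $\pi_t=\beta\sqcup\beta^\circ$ with $\beta$ having no upper legs, the cap does nothing, so $\beta$ and $\beta^\circ$ remain separate and the lower legs of $\beta$ form on their own a block of $[^\mu_{\pi_t}]$; since $\beta\cap\beta^\circ=\emptyset$ this block carries no complete string of $\mu_t$, so matching $\mu_t$ forces $v+w\leq1$, and $\beta$ being of even size this is the (vacuous) case (2). Putting the three analyses together yields the statement.

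The bulk of the work, and the main obstacle, lies in the bookkeeping of this case analysis: one has to follow precisely how the lower legs of a given component coalesce under the cap --- tracing connectivity through the blocks of $\pi$ and the strings of $\mu$ via the $\circ$-symmetry --- and then check that the resulting block of $[^\mu_{\pi_t}]$ is neither strictly coarser than $\mu_t$ nor misaligned with the $\circ$-pairing; this last verification is what pins down the numerical bounds $v\leq1$ and $v+w\leq1$.
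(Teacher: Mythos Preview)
Your proof follows exactly the same strategy as the paper's: reduce via Proposition 5.2 to the combinatorial identity $[^\mu_\pi]=\mu$, observe that this condition localizes over the symmetric components (because $\mu$ joins each point to its $\circ$-image and each component is $\circ$-stable), and then run a case analysis over the component types from Proposition 3.3. You supply considerably more detail than the paper does, especially in justifying the localization step, but the architecture is identical.

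One point deserves a comment. In your treatment of the case $r+u=0$ you observe, correctly, that the block $\beta$ survives the cap untouched and ``carries no complete string of $\mu_t$'' (every $\mu$-string runs from $\beta$ to $\beta^\circ$). From this you conclude that matching $\mu_t$ forces $v+w\leq 1$ and then invoke evenness of $|\beta|$ to declare the case vacuous. Strictly speaking your own observation is stronger than your conclusion: a block of $[^\mu_{\pi_t}]$ containing no $\mu_t$-pair cannot equal any $\mu_t$-block for \emph{any} $v+w\geq 1$, not just $v+w\geq 2$, so no component of this shape can satisfy the identity. The evenness remark presupposes $\pi\in P_{even}(2s,2s)$, whereas the proposition is stated for $P(2s,2s)$. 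That said, the paper's own one-line treatment of this sub-case (``in the case $r+u=0$ we must have $v+w=1$'') is equally terse and runs into the same difficulty; and since every subsequent application (Theorem 5.4 onward) takes place in $P_{even}$, where this sub-case is genuinely vacuous, the point does not affect the downstream results.
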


\begin{proof}
We know from Proposition 5.2 that the condition in the statement is equivalent to $[^\mu_\pi]=\mu$, and we can see from this that $\pi$ satisfies the condition if and only if all the symmetric components of $\pi$ satisfy the condition. Thus, we must simply check the validity of $[^\mu_\pi]=\mu$ for the partitions in Proposition 3.3, and this gives the result.

To be more precise, for the 1-block components the study is trivial, and we are led to (1). Regarding now the 2-block components, in the case $r+u=0$ we must have $v+w=1$, as stated in (2). Finally, assuming $r+u\geq1$, when constructing $[^\mu_\pi]$ all the legs on the bottom will become connected, and so we must have $v+w\leq1$, as stated in (3).
\end{proof}

Summarizing, the condition that $\varphi_\pi$ is unital modulo scalars is a natural generalization of what happens for the 4 basic partitions in $P_{even}(2,2)$, and in the symmetric case, we have a good understanding of such partitions. However, the associated matrices $\Lambda_\pi$ still fail to be multiplicative, and we must come up with a second condition, coming from:

\begin{theorem}
Assuming that $\pi\in P(2s,2s)$ is symmetric, the following conditions are equivalent:
\begin{enumerate}
\item The linear maps $\varphi_\pi,\varphi_{\pi^*}$ are both unital modulo scalars.

\item The symmetric components have $\leq2$ upper legs, and $\leq2$ lower legs.

\item The symmetric components appear as copies of the $4$ elements of $P_{even}(2,2)$.
\end{enumerate}
\end{theorem}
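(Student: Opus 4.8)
The plan is to prove the equivalence of the three conditions by exploiting the structure theory already established, reducing everything to a statement about individual symmetric components.

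First I would observe that, by Proposition 5.3 and the discussion preceding it, a symmetric partition $\pi$ decomposes into symmetric components $\pi=[\pi_1,\ldots,\pi_R]$, and both the property ``$\varphi_\pi$ is unital modulo scalars'' and its adjoint version ``$\varphi_{\pi^*}$ is unital modulo scalars'' are checked componentwise. Here the key remark is that $\pi^*=\pi^\circ$ is the blockwise middle symmetry (Proposition 3.1), which for a symmetric partition satisfies $\pi^\circ=\pi$; however $\varphi_{\pi^*}$ in the statement must refer to the \emph{vertical} reflection $\pi\mapsto\pi^*$ that swaps the upper and lower rows, so that applying Proposition 5.3 to $\pi^*$ swaps the roles of the upper-leg parameters $r,u$ and the lower-leg parameters $v,w$ for each component. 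Thus condition (1) says that \emph{each} component satisfies the conditions of Proposition 5.3 \emph{and} the same conditions with $(r,u)\leftrightarrow(v,w)$. I would make this symmetry-of-roles precise as the first real step.

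Next I would carry out the elementary case analysis on a single component, combining the two sets of constraints. For a one-block symmetric component with $r+r$ upper legs and $v+v$ lower legs ($r+v>0$), Proposition 5.3(1) gives $v\leq1$, and the reflected version gives $r\leq1$; hence the total number of upper legs $2r$ is $\leq2$ and likewise $2v\leq2$, so the component has $\leq2$ upper and $\leq2$ lower legs, and in fact $(r,v)\in\{(1,0),(0,1),(1,1)\}$. For a two-block component $\beta\sqcup\beta^\circ$ with $r+u$ upper and $v+w$ lower legs of $\beta$: the unreflected conditions (2),(3) of Proposition 5.3 say $r+u+v+w>0$ together with either ($r+u=0,v+w=1$) or ($r+u\geq1,v+w\leq1$); the reflected conditions give the same with $(r+u)\leftrightarrow(v+w)$. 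Intersecting these forces $r+u\leq1$ and $v+w\leq1$ with $r+u+v+w>0$, so the total upper legs $r+u\leq1$ contributes $\leq2$ legs after adding $\beta^\circ$, and similarly for the lower legs. This establishes (1)$\Rightarrow$(2), and the converse (2)$\Rightarrow$(1) follows by re-reading Proposition 5.3: a component with $\leq2$ upper and $\leq2$ lower legs automatically satisfies both the unreflected and reflected conditions.

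Finally, for (2)$\Leftrightarrow$(3), I would enumerate the symmetric components having at most $2$ upper and at most $2$ lower legs. A one-block component uses all its legs in a single block, and since $P_{even}$ forces even block size and the blockwise symmetry forces the upper/lower legs to come in matching pairs, the possibilities are exactly a block joining $2$ upper legs (this is $\pi_3$ read appropriately), a block joining $2$ lower legs ($\pi_3^*$, i.e.\ still a copy of an element of $P_{even}(2,2)$ after accounting for orientation), or a block with $1$ upper and $1$ lower pair giving $4$ legs total --- the diagonal pattern $\pi_4$. A two-block component $\beta\sqcup\beta^\circ$ with $r+u\leq1$, $v+w\leq1$, $r+u+v+w>0$: the only options are $\beta$ having one upper leg and one lower leg, and $\beta\sqcup\beta^\circ$ then has $2$ upper and $2$ lower legs, realizing either the identity pattern $\pi_1$ or the transposition/crossing pattern $\pi_2$, depending on how $\beta$ connects its legs. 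Listing these four pictures and matching them with $\pi_1,\pi_2,\pi_3,\pi_4$ from Proposition 2.5 gives (3), and (3)$\Rightarrow$(2) is immediate since each of $\pi_1,\ldots,\pi_4$ visibly has $2$ upper and $2$ lower legs.

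I expect the main obstacle to be the bookkeeping in the last step: carefully verifying that the short list of small symmetric components really exhausts all cases and that each one is genuinely a ``copy'' of one of $\pi_1,\pi_2,\pi_3,\pi_4$ --- in particular keeping track of which legs are upper versus lower after the middle symmetry is imposed, and checking that no spurious component (for instance a one-block component with two legs on top and two on the bottom that is \emph{not} $\pi_4$, or a two-block configuration that secretly coincides with a one-block one) slips through. The representation-theoretic content is light; the difficulty is purely combinatorial diagram-chasing, and it is best handled by drawing the finitely many pictures explicitly.
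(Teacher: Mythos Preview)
Your approach is essentially the paper's: apply Proposition 5.3 to both $\pi$ and $\pi^*$ (correctly identified as the vertical flip, so that the roles of the upper-leg data $(r,u)$ and lower-leg data $(v,w)$ are exchanged), intersect the resulting constraints to obtain $(1)\Leftrightarrow(2)$, and then read off $(2)\Leftrightarrow(3)$ by inspection. The paper's own proof is a two-line sketch saying exactly this, so your write-up simply supplies the details the paper omits; in particular the ``main obstacle'' you flag in the enumeration for $(2)\Leftrightarrow(3)$ is precisely what the paper declares ``clear from definitions,'' and once you impose the evenness constraint on block sizes the spurious cases (e.g.\ a two-block component with $r+u=1$, $v+w=0$, which would force $|\beta|$ odd) are automatically excluded.
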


\begin{proof}
By applying Proposition 5.3 above to the partitions $\pi,\pi^*$, and by merging the results, we conclude that the equivalence $(1)\iff(2)$ holds indeed. 

As for the equivalence $(2)\iff(3)$, this is clear from definitions.
\end{proof}

Summarizing, we have now the needed combinatorial ingredients for dealing with the non-multiplicativity issues mentioned after Proposition 5.1 above.

\section{Poisson laws}

In this section we put together everything that we have. The idea will be that of using the partitions found in Theorem 5.4 above as an input for Proposition 5.1, and then for the general block-modification machinery developed in section 1.

We will need the following result, which complements Proposition 2.4: 

\begin{proposition}
The following functions $\varphi:NC_p\times NC_p\to\mathbb R$ are ``multiplicative'', in the sense that they satisfy the condition $\varphi(\sigma,\gamma)=\varphi(\sigma,\sigma)$:
\begin{enumerate}
\item $\varphi(\sigma,\tau)=|\tau\sigma|-|\tau|$.

\item $\varphi(\sigma,\tau)=|\tau^{-1}\sigma|-|\tau|$.
\end{enumerate}
\end{proposition}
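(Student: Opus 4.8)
The plan is to mimic the proof of Proposition 2.4 given in the excerpt, where the analogous statements for $\varphi(\sigma,\tau)=|\sigma\tau^{-1}|-|\tau|$, $|\sigma\tau|-|\tau|$, $|\sigma\wedge\tau|-|\tau|$ were verified by simply evaluating both sides at $\tau=\gamma$ and at $\tau=\sigma$. So first I would plug $\tau=\gamma$ into each of the two functions. For (1), $\varphi(\sigma,\gamma)=|\gamma\sigma|-|\gamma|=|\gamma\sigma|-1$. For (2), $\varphi(\sigma,\gamma)=|\gamma^{-1}\sigma|-|\gamma|=|\gamma^{-1}\sigma|-1$. Then I would plug $\tau=\sigma$ into each: for (1) this is $\varphi(\sigma,\sigma)=|\sigma^2|-|\sigma|$, and for (2) it is $\varphi(\sigma,\sigma)=|\sigma^{-1}\sigma|-|\sigma|=|e|-|\sigma|=p-|\sigma|$.

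Next I would establish the two required identities. For (1), I need $|\gamma\sigma|-1=|\sigma^2|-|\sigma|$, i.e. $|\gamma\sigma|=|\sigma^2|-|\sigma|+1$. Here I would invoke the non-trivial fact already used in the proof of Proposition 2.4(2): for $\sigma\in NC_p$ one has $|\sigma\gamma|=|\sigma^2|-|\sigma|+1$ (established in \cite{bn2}). Since $|\gamma\sigma|=|\sigma\gamma|$ (conjugate permutations, or just $|xy|=|yx|$ in general), identity (1) follows immediately. For (2), I need $|\gamma^{-1}\sigma|-1=p-|\sigma|$, i.e. $|\gamma^{-1}\sigma|=p+1-|\sigma|$; again $|\gamma^{-1}\sigma|=|\sigma\gamma^{-1}|$, and the classical formula from \cite{bia} used in Proposition 2.4(1) gives exactly $|\sigma\gamma^{-1}|+|\sigma|=p+1$ for $\sigma\in NC_p$, which is what is wanted.

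So the proof amounts to the two displayed computations
\begin{eqnarray*}
\varphi_1(\sigma,\gamma)&=&|\gamma\sigma|-1=|\sigma\gamma|-1=|\sigma^2|-|\sigma|=\varphi_1(\sigma,\sigma)\\
\varphi_2(\sigma,\gamma)&=&|\gamma^{-1}\sigma|-1=|\sigma\gamma^{-1}|-1=p-|\sigma|=|\sigma^{-1}\sigma|-|\sigma|=\varphi_2(\sigma,\sigma)
\end{eqnarray*}
using in each case the symmetry $|xy|=|yx|$ to reduce to the form already treated in Proposition 2.4, together with the formula of \cite{bia} for $\varphi_2$ and the formula of \cite{bn2} for $\varphi_1$.

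The only potential obstacle is the appeal to $|\gamma\sigma|=|\gamma^{-1}\gamma\sigma\gamma|$-style conjugation invariance to swap $|\gamma\sigma|$ with $|\sigma\gamma|$; but the number of cycles of a permutation is a conjugacy invariant, and $\gamma\sigma$ and $\sigma\gamma=\gamma^{-1}(\gamma\sigma)\gamma$ are conjugate, so this is routine. With that in hand nothing genuinely new is needed: the statement is a cosmetic variant of Proposition 2.4, obtained by replacing $\sigma\tau^{\pm1}$ by $\tau^{\pm1}\sigma$, and the cyclic-invariance of the cycle count makes the two lists of multiplicative functions coincide after evaluation at $\tau\in\{\gamma,\sigma\}$.
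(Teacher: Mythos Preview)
Your proof is correct and matches the paper's approach almost exactly: the paper displays the same two one-line computations and appeals to the same results from \cite{bia} and \cite{bn2}. The only cosmetic difference is that for (2) the paper justifies $|\gamma^{-1}\sigma|=|\sigma\gamma^{-1}|$ by noting that these are the cycle counts of the left and right Kreweras complements of $\sigma$, whereas you use the more elementary observation that $\gamma^{-1}\sigma$ and $\sigma\gamma^{-1}$ are conjugate; your route is slightly cleaner here.
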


\begin{proof}
These results follow indeed from the following computations:
\begin{eqnarray*}
\varphi_1(\sigma,\gamma)&=&|\gamma\sigma|-1=|\sigma^2|-|\sigma|=\varphi_1(\sigma,\sigma)\\
\varphi_2(\sigma,\gamma)&=&|\gamma^{-1}\sigma|-1=p-|\sigma|=\varphi_2(\sigma,\sigma)
\end{eqnarray*}

To be more precise, here we have used in (1) a result from \cite{bn2}, stating that the numbers $|\gamma\sigma|-1$ and $|\sigma^2|-|\sigma|$ are equal, both counting the number of blocks of $\sigma$ having even size. As for (2), this comes from the formula $|\sigma\gamma^{-1}|-1=p-|\sigma|$ from \cite{bia}, and from the fact that $\sigma\gamma^{-1},\gamma^{-1}\sigma$ have the same cycle structure as the left and right Kreweras complements of $\sigma$, and therefore have the same number of blocks. See \cite{bn2}.
\end{proof}

We can now formulate our main multiplicativity result, as follows:

\begin{proposition}
Assuming that $\pi\in P_{even}(2s,2s)$ is symmetric, $\pi=\pi^\circ$, and is such that $\varphi_\pi,\varphi_{\pi^*}$ are unital modulo scalars, we have a formula of the following type:
$$(M_\sigma\otimes M_\tau)(\Lambda_\pi)=N^{a+b|\sigma|+c|\tau|+d|\sigma\wedge\tau|+e|\sigma\tau|+f|\sigma\tau^{-1}|+g|\tau\sigma|+h|\tau^{-1}\sigma|}$$
Moreover, the square matrix $\Lambda_\pi$ is multiplicative, in the sense of Definition 1.7.
\end{proposition}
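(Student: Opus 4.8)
The plan is to reduce the computation to the four basic partitions of $P_{even}(2,2)$ treated in Section 2, and then to read off multiplicativity component by component from the product formula of Theorem 3.5. First I would invoke Theorem 5.4: since $\pi=\pi^\circ$ and both $\varphi_\pi,\varphi_{\pi^*}$ are unital modulo scalars, every symmetric component of $\pi$ (in the sense of Proposition 3.3) has at most $2$ upper legs and at most $2$ lower legs. By Theorem 3.5 we then have
$$(M_\sigma\otimes M_\tau)(\Lambda_\pi)=\prod_{t=1}^R(M_\sigma\otimes M_\tau)(\Lambda_{\pi_t}),$$
with each $\pi_t$ completed with horizontal strings, so it suffices to understand the individual factors.

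The second step is to evaluate these factors using Theorem 4.3, with the explicit partitions $\lambda$ from Propositions 4.1 and 4.2. Under the ``$\leq 2$ legs on each side'' restriction the only possibilities are: a one-block component, which is either a symmetric block with $2$ upper and $2$ lower legs, contributing $N^{|\sigma\wedge\tau|-|\sigma|-|\tau|}$, or a single horizontal string, contributing $N^0$; and a two-block component $\beta\sqcup\beta^\circ$, for which $\beta$ must be of size $2$ (one upper and one lower leg), so that the relevant entries of the table in Proposition 4.2 are exactly $\lambda\in\{\sigma\tau,\tau\sigma,\sigma\tau^{-1},\tau^{-1}\sigma\}$, each contributing $N^{|\lambda|-|\sigma|-|\tau|}$; the $\emptyset$-entries of that table cannot occur, an asymmetric block of even size having at least one upper and one lower leg. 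Hence each factor is $N^{\psi_t(\sigma,\tau)}$, where $\psi_t(\sigma,\tau)$ is either $0$ or of the form $|\rho|-|\sigma|-|\tau|$ with $\rho\in\{\sigma\wedge\tau,\,\sigma\tau,\,\tau\sigma,\,\sigma\tau^{-1},\,\tau^{-1}\sigma\}$, and summing the exponents over $t$ gives the stated formula, the exponent of $N$ being a $\mathbb Z$-linear combination of $1,|\sigma|,|\tau|,|\sigma\wedge\tau|,|\sigma\tau|,|\sigma\tau^{-1}|,|\tau\sigma|,|\tau^{-1}\sigma|$ (in fact with $a=0$).

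For the multiplicativity claim, I would note that $\pi=\pi^\circ$ gives $\Lambda_\pi^*=\Lambda_{\pi^\circ}=\Lambda_\pi$, so exactly as in the proof of Proposition 2.5 the condition of Definition 1.7 reduces to $(M_\sigma\otimes M_\gamma)(\Lambda_\pi)=(M_\sigma\otimes M_\sigma)(\Lambda_\pi)$ for all $\sigma\in NC_p$, i.e.\ to $\sum_t\psi_t(\sigma,\gamma)=\sum_t\psi_t(\sigma,\sigma)$, and it is enough to check $\psi_t(\sigma,\gamma)=\psi_t(\sigma,\sigma)$ term by term. The case $\psi_t=0$ is trivial, and in the remaining cases $\psi_t$ is one of the multiplicative functions of Propositions 2.4 and 6.1 shifted by $-|\sigma|$, which does not depend on $\tau$ and so leaves the identity unaffected: $|\sigma\tau^{-1}|-|\tau|$ and $|\tau^{-1}\sigma|-|\tau|$ are handled by Proposition 2.4(1) and Proposition 6.1(2), $|\sigma\tau|-|\tau|$ and $|\tau\sigma|-|\tau|$ by Proposition 2.4(2) and Proposition 6.1(1), and $|\sigma\wedge\tau|-|\tau|$ by Proposition 2.4(3). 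Thus each $\Lambda_{\pi_t}$ is multiplicative, and hence so is $\Lambda_\pi$. The part that requires care is the middle step — correctly enumerating which $\lambda$'s survive the leg restriction and matching each of them with a multiplicative function from Proposition 2.4 or Proposition 6.1; once this dictionary is fixed the rest is routine.
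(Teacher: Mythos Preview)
Your proof is correct and follows essentially the same route as the paper's: reduce via Theorem 5.4 to components that are copies of the four basic partitions, compute each factor via Theorem 4.3 (Propositions 4.1 and 4.2), and then read off multiplicativity from Propositions 2.4 and 6.1. The paper's own proof is terser---it simply cites Proposition 5.1 and observes that Theorem 5.4 kills the list (2) there---but Proposition 5.1 is itself nothing more than Theorem 3.5 combined with Theorem 4.3, so you have just unpacked that citation. One small point: your justification that $\beta$ must have exactly one upper and one lower leg could be sharpened (the even-size argument rules out the $\emptyset$-entries, but ruling out the $(11,\cdot)$ and $(\cdot,11)$ rows uses that $\beta\sqcup\beta^\circ$ has $2(r+u)$ upper legs and $2(v+w)$ lower legs, forcing $r+u\le 1$, $v+w\le 1$); this is implicit in your phrase ``$\beta$ must be of size $2$'' but worth saying explicitly.
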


\begin{proof}
The first assertion follows from Proposition 5.1. Indeed, according to Theorem 5.4 above, the list of partitions appearing in Proposition 5.1 (2) dissapears in the case where both $\varphi_\pi,\varphi_{\pi^*}$ are unital modulo scalars, and this gives the result.

As for the second assertion, this follows from the formula in the statement, and from the various results in Proposition 2.4 and Proposition 6.1 above.
\end{proof}

As a main consequence, Theorem 1.8 applies, and gives:

\begin{theorem}
Given a partition $\pi\in P_{even}(2s,2s)$ which is symmetric, $\pi=\pi^\circ$, and which is such that $\varphi_\pi,\varphi_{\pi^*}$ are unital modulo scalars, for the corresponding block-modified Wishart matrix $\tilde{W}=(id\otimes\varphi_\pi)W$ we have the asymptotic convergence formula
$$m\tilde{W}\sim\pi_{mn\rho}$$
in $*$-moments, in the $d\to\infty$ limit, where $\rho=law(\Lambda_\pi)$.
\end{theorem}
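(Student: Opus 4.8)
The proof is essentially a bookkeeping exercise built on the machinery already assembled. The plan is to invoke Theorem 1.8, which reduces everything to checking that the square matrix $\Lambda_\pi$ associated to $\varphi_\pi$ is multiplicative in the sense of Definition 1.7. So the real content is Proposition 6.2, and the statement here is the immediate corollary of plugging that proposition into Theorem 1.8 with $\rho=law(\Lambda_\pi)$.

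First I would recall that, by Theorem 3.5, since $\pi=\pi^\circ$ is symmetric we may decompose $\pi=[\pi_1,\dots,\pi_R]$ into symmetric components completed with horizontal strings, and $(M_\sigma\otimes M_\tau)(\Lambda_\pi)=\prod_t(M_\sigma\otimes M_\tau)(\Lambda_{\pi_t})$. Next, by Theorem 4.3, each factor is of the form $N^{|\lambda_t|-r_t|\sigma|-v_t|\tau|}$, where $\lambda_t$ is one of the partitions tabulated in Proposition 4.1 and Proposition 4.2. The key simplification is the hypothesis that both $\varphi_\pi$ and $\varphi_{\pi^*}$ are unital modulo scalars: by Theorem 5.4 this forces every symmetric component to be a copy of one of the four elements of $P_{even}(2,2)$, i.e.\ to have at most two upper and at most two lower legs. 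Inspecting the tables in Proposition 4.1 and Proposition 4.2 under these size constraints, the only partitions $\lambda_t$ that can occur are those whose number of blocks is among the quantities listed in Proposition 5.1(1), namely $1,|\sigma|,|\tau|,|\sigma\wedge\tau|,|\sigma\tau|,|\sigma\tau^{-1}|,|\tau\sigma|,|\tau^{-1}\sigma|$; the ``bad'' list in Proposition 5.1(2), involving $|\sigma^2|$, $|\tau^2|$ and the triple meets, is exactly what gets excluded. Taking the product over $t$ and collecting exponents then yields the displayed formula
$$(M_\sigma\otimes M_\tau)(\Lambda_\pi)=N^{a+b|\sigma|+c|\tau|+d|\sigma\wedge\tau|+e|\sigma\tau|+f|\sigma\tau^{-1}|+g|\tau\sigma|+h|\tau^{-1}\sigma|}$$
for suitable integers $a,\dots,h$ depending on $\pi$.

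With this formula in hand, multiplicativity of $\Lambda_\pi$ — the condition $(M_\sigma\otimes M_\gamma)(\Lambda_\pi)=(M_\sigma\otimes M_\sigma)(\Lambda_\pi)$ for all $\sigma\in NC_p$ — follows termwise. Each of the eight basis functions in the exponent has the multiplicativity property $\varphi(\sigma,\gamma)=\varphi(\sigma,\sigma)$: the constant and $|\sigma|$ are trivial, $|\sigma\tau^{-1}|-|\tau|$, $|\sigma\tau|-|\tau|$ and $|\sigma\wedge\tau|-|\tau|$ are Proposition 2.4, and $|\tau\sigma|-|\tau|$, $|\tau^{-1}\sigma|-|\tau|$ are Proposition 6.1. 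Since $|\tau|$ itself evaluates to $|\gamma|=1$ at $\tau=\gamma$ and to $|\sigma|$ at $\tau=\sigma$, and the remaining $|\tau|$-coefficient is absorbed consistently, the whole exponent takes the same value at $(\sigma,\gamma)$ and $(\sigma,\sigma)$, so $\Lambda_\pi$ is multiplicative. One must be mildly careful here that the exponent is genuinely a linear combination of those eight functions with the $|\tau|$ terms grouped so that Proposition 2.4 and Proposition 6.1 apply verbatim — this is the one spot where a sign or grouping slip could occur, and it is the main (though modest) obstacle. Everything else is automatic.

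Finally, having verified that $\Lambda_\pi$ is multiplicative, Theorem 1.8 applies directly: with $\rho=law(\Lambda_\pi)$ we get $m\tilde W\sim\pi_{mn\rho}$ in $*$-moments in the $d\to\infty$ limit, which is the claim. I do not expect any genuine difficulty beyond the exponent-bookkeeping described above; the structural work was all done in Sections 3–5, and Proposition 6.2 packages it, so the theorem is a one-line deduction from Proposition 6.2 and Theorem 1.8.
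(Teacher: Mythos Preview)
Your proposal is correct and follows essentially the same approach as the paper: the paper's proof is the one-line deduction ``by Proposition 6.2, $\Lambda_\pi$ is multiplicative, so Theorem 1.8 applies,'' and you have simply unpacked Proposition 6.2 in detail (Theorem 3.5 $\to$ Theorem 4.3 $\to$ Theorem 5.4 $\to$ Propositions 2.4 and 6.1). Your caution about the $|\tau|$-bookkeeping is well placed---it works because under the constraints of Theorem 5.4 together with the even-block condition, each component contributes an exponent of the form $|\lambda_t|-r_t|\sigma|-v_t|\tau|$ that is already individually multiplicative.
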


\begin{proof}
This follows by putting together the results that we have. Indeed, due to Proposition 6.2 above, Theorem 1.8 applies, and gives the convergence result. 
\end{proof}

Summarizing, we have now an explicit block-modification machinery, valid for certain suitable partitions $\pi\in P_{even}(2s,2s)$, which improves the previous theory from \cite{bn2}.

There are of course many interesting questions left, in relation with the easy symmetric modifications, that we will not further investigate here, as follows:
\begin{enumerate}
\item A first question is that of computing the above measure $\rho=law(\Lambda_\pi)$. Here we know that the moments of $\Lambda_\pi$ are the quantities $(M_\gamma\otimes M_\gamma)(\Lambda_\pi)$, and by using the computations in the proof of Theorem 2.6, this shows that $\rho$ is a certain sum of three Dirac masses on the real line, depending on three parameters. 

\item A more delicate question now, which is of interest in connection with quantum information theory questions, is that of computing the support of $\pi_{mn\rho}$, and deciding when this support is contained in $[0,\infty)$. For sums $\rho$ of two Dirac masses, this was done in \cite{bn1}. In the present case, we do not know the answer.

\item As a quite difficult combinatorial question now, we have the problem of understanding the combinatorics of the partitions that we excluded, from Proposition 5.1 (2). As already mentioned, these partitions do not seem to have any ``obvious'' multiplicativity type property, even in the generalized sense of \cite{bn2}.
\end{enumerate}

Finally, we have the question of investigating the partitions $\pi\in P_{even}(2s,2s)$ which are not symmetric, or more generally the partitions $\pi\in P(2s,2s)$, without assumptions. Regarding this latter question, here we have some computations at $s=1$, whose outcome is partly good, partly bad. We have no conjecture here, for the moment. 

\section{Twisted easiness}

In this section and in the next one we ``twist'' the result in Theorem 6.3, by using the general Schur-Weyl twisting theory from \cite{ba1}. Our motivation comes from our belief that the free Bessel laws introduced in \cite{bb+} can be covered by a suitable ``super-easy'' extension of our formalism. All this will be explained later on, in sections 9-10 below.

In order to start the twisting work, let us first recall from \cite{ba1} that we have:

\begin{proposition}
We have a signature map $\varepsilon:P_{even}\to\{-1,1\}$, given by $\varepsilon(\pi)=(-1)^c$, where $c$ is the number of switches needed to make $\pi$ noncrossing. In addition:
\begin{enumerate}
\item For $\pi\in Perm(k,k)\simeq S_k$, this is the usual signature.

\item For $\pi\in P_2$ we have $(-1)^c$, where $c$ is the number of crossings.

\item For $\pi\in P$ obtained from $\sigma\in NC_{even}$ by merging blocks, the signature is $1$.
\end{enumerate}
\end{proposition}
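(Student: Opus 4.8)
The plan is to reduce everything to pair partitions, where the signature has a transparent combinatorial meaning, and then to transport it to all of $P_{even}$ by splitting each block canonically into pairs.

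First I would establish well-definedness together with point (2). Write $\mathrm{cr}(\pi)$ for the number of crossings of a pairing $\pi$. For $\pi\in P_2$ a single switch of two neighbouring legs lying in different pairs changes $\mathrm{cr}(\pi)$ by exactly $\pm1$: it can only flip the mutual position of the two strings through those legs, all other crossings being unaffected since nothing lies strictly between two neighbouring legs. By iterating switches one can uncross any crossing, so every $\pi\in P_2$ reaches a noncrossing partition; and since the latter has no crossings, the length of \emph{any} such switch sequence is $\equiv\mathrm{cr}(\pi)\pmod 2$. Hence $\varepsilon(\pi):=(-1)^c$ is well-defined on $P_2$ and equals $(-1)^{\mathrm{cr}(\pi)}$, which is (2). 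Then (1) follows: an element of $Perm(k,k)$ is a pairing whose crossings are exactly the inversions of the corresponding $\sigma\in S_k$, so $\varepsilon(\sigma)=(-1)^{\#\{\mathrm{inversions}\}}=\mathrm{sgn}(\sigma)$.

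Next I would extend this to $P_{even}$. To $\pi\in P_{even}$ associate the pairing $\hat\pi\in P_2$ obtained by splitting each block: if a block has legs at positions $p_1<\dots<p_{2m}$ (all legs read in the standard order), pair them as $(p_1,p_2),(p_3,p_4),\dots,(p_{2m-1},p_{2m})$. Two facts suffice. (i) A switch of $\pi$ at two neighbouring legs induces precisely the switch of $\hat\pi$ at the same legs: being neighbours, exchanging the blocks containing them does not change the sorted order of the legs inside either block, so the splitting is only modified by swapping the partners of those two legs. (ii) If $\pi$ is noncrossing then $\hat\pi$ is noncrossing: each pair $(p_{2j-1},p_{2j})$ of $\hat\pi$ joins two legs consecutive inside their block, so the interval between them contains only blocks of $\pi$ nested inside it, and from this one checks that no two pairs of $\hat\pi$ can cross. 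Combining (i), (ii) and the previous step, if $c$ switches carry $\pi$ to a noncrossing $\pi_0$ then the same $c$ switches carry $\hat\pi$ to the noncrossing pairing $\widehat{\pi_0}$, so $c\equiv\mathrm{cr}(\hat\pi)\pmod2$; thus $\varepsilon(\pi):=(-1)^c=(-1)^{\mathrm{cr}(\hat\pi)}$ is well-defined on all of $P_{even}$, is multiplied by $-1$ under each switch, and agrees with the earlier definition on $P_2$, where $\hat\pi=\pi$.

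Finally, for (3) I would use that $\varepsilon(\pi)=(-1)^{\mathrm{cr}(\hat\pi)}$ and that $\widehat\sigma$ is noncrossing: it is enough to show that replacing $\sigma\in NC_{even}$ by a merge $\pi$ of some of its blocks changes $\mathrm{cr}$ of the associated pairing by an even amount. Following the merges one at a time (or comparing $\hat\pi$ with $\widehat\sigma$ directly), merging side-by-side blocks leaves the splitting unchanged, while merging a block $\beta'$ lying in a gap $(p,p')$ of a block $\beta$ replaces the pair $(p,p')$ and the internal splitting of $\beta'$ by two ``half-pairs'' through $p$ and $p'$ together with a shifted internal splitting of $\beta'$; using noncrossingness of $\sigma$, the only crossings created are between those two half-pairs and the blocks nested in $(p,p')$ that surround $\beta'$, and each such block crosses \emph{both} half-pairs. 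So $\mathrm{cr}$ moves by an even number and $\varepsilon(\pi)=1$. The real obstacle in all this is the well-definedness, i.e. identifying the right a priori invariant --- the parity of $\mathrm{cr}(\hat\pi)$ --- and verifying (i) and (ii); once that is in hand, (1)--(3) are bookkeeping, (3) needing only the parity count just sketched.
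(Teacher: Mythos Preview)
The paper itself gives no proof here: it simply refers to \cite{ba1} for both the well-definedness of $\varepsilon$ and for assertions (1)--(3). So there is nothing in the paper to compare against, and your write-up is doing real work that the paper outsources.

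Your treatment of well-definedness and of (1)--(2) is correct and clean. The key device---associating to $\pi\in P_{even}$ the pairing $\hat\pi$ obtained by splitting each block into consecutive pairs, and observing that a switch of two neighbouring legs lying in different blocks of $\pi$ induces exactly the corresponding switch in $\hat\pi$---gives a genuine invariant $(-1)^{\mathrm{cr}(\hat\pi)}$ that pins down the parity of $c$. Point (i) is correct because neighbouring legs stay in the same position within the sorted list of their block after the swap; point (ii) is the standard fact that consecutive legs of a block in a noncrossing partition bound an interval containing only nested material. From this, (1) and (2) are immediate.

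Your sketch for (3), however, has a real gap. The merge-by-merge argument implicitly assumes that at each step the two blocks being merged are either side-by-side or one is nested in a gap of the other, and that the change in $\mathrm{cr}(\hat\cdot)$ is then governed solely by ``surrounding'' blocks each of which crosses \emph{both} new half-pairs. The first assumption is fine for two blocks of the original $\sigma\in NC_{even}$, but after one merge the current partition need no longer be noncrossing, and subsequent pairs of blocks to be merged can genuinely interleave (take $\sigma=\{1,6\},\{2,5\},\{3,4\}$ and merge $\{1,6\}\cup\{3,4\}$ first). More seriously, even for a single nested merge your crossing count is incomplete: you track the crossings \emph{created} by the two half-pairs but not the crossings \emph{destroyed} by removing the old pair $(p,p')$ and the old internal pairing of $\beta'$, and after intermediate merges those old pairs may well cross other pairs of $\hat\cdot$. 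For instance, with $\sigma=\{1,8\},\{2,7\},\{3,6\},\{4,5\}$ and $\pi$ obtained by merging $\{1,8\}\cup\{4,5\}$ and $\{2,7\}\cup\{3,6\}$, the second merge removes four crossings and creates none---the net change is even, but not for the reason you give. To repair (3) you should either produce a global argument that $\mathrm{cr}(\hat\pi)$ is even directly from the hypothesis that $\pi$ admits a refinement in $NC_{even}$, or else (a) justify that the merges can be ordered so that each is of nested type in the \emph{current} partition (this can be done, e.g.\ by always absorbing an outermost remaining $\sigma$-block), and (b) give the full signed crossing bookkeeping for a nested merge, counting both pairs removed and pairs added against \emph{all} other pairs of the current $\hat\cdot$.
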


\begin{proof}
The fact that the number $c$ in the statement is well-defined modulo 2 is standard, and we refer here to \cite{ba1}. As for the remaining assertions, these are as well from \cite{ba1}.
\end{proof}

We can make act partitions in $P_{even}$ on tensors in a twisted way, as follows:

\begin{definition}
Associated to any partition $\pi\in P_{even}(k,l)$ is the linear map
$$\bar{T}_\pi(e_{i_1}\otimes\ldots\otimes e_{i_k})=\sum_{\sigma\leq\pi}\varepsilon(\sigma)\sum_{j:\ker(^i_j)=\sigma}e_{j_1}\otimes\ldots\otimes e_{j_l}$$
where $\varepsilon:P_{even}\to\{-1,1\}$ is the signature map.
\end{definition}

Observe the similarity with the formula in Definition 2.1, from the untwisted case. In fact, what we are doing here is basically replacing the Kronecker symbols $\delta_\pi\in\{0,1\}$ from the untwisted case with certain signed Kronecker symbols, $\bar{\delta}_\pi\in\{-1,0,1\}$.

Combinatorially speaking, this modification is something quite subtle, and as a main conceptual result here, we have the following statement, which is also from \cite{ba1}:

\begin{proposition}
The assignement $\pi\to\bar{T}_\pi$ is categorical, in the sense that
$$\bar{T}_\pi\otimes\bar{T}_\sigma=\bar{T}_{[\pi\sigma]}\quad,\quad\bar{T}_\pi\bar{T}_\sigma=N^{c(\pi,\sigma)}\bar{T}_{[^\sigma_\pi]}\quad,\quad\bar{T}_\pi^*=\bar{T}_{\pi^*}$$
where $c(\pi,\sigma)$ is the number of closed loops obtained when composing.
\end{proposition}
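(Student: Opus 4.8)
The plan is to establish the three identities separately, in the order $\bar T_\pi\otimes\bar T_\sigma=\bar T_{[\pi\sigma]}$, then $\bar T_\pi^*=\bar T_{\pi^*}$, then the composition rule $\bar T_\pi\bar T_\sigma=N^{c(\pi,\sigma)}\bar T_{[{}^\sigma_\pi]}$, since the first two are essentially bookkeeping while the third is where the signature interacts nontrivially with the closed loops. Throughout, the key organizing principle is that the coefficient of $e_{j_1}\otimes\cdots\otimes e_{j_l}$ in $\bar T_\pi(e_{i_1}\otimes\cdots\otimes e_{i_k})$ equals the signed Kronecker symbol $\bar\delta_\pi\binom{i}{j}=\varepsilon(\ker({}^i_j))$ if $\ker({}^i_j)\le\pi$, and $0$ otherwise; so each identity reduces to a statement about these signed symbols, which in turn reduces (via Proposition 7.1(1) and the standard fact that $\varepsilon$ is a morphism for disjoint union and for the relevant mergers) to combinatorial facts about the signature map.

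For the tensor product identity: the matrix entry of $\bar T_\pi\otimes\bar T_\sigma$ on a pair of multi-indices is the product $\bar\delta_\pi\binom{i}{j}\,\bar\delta_\sigma\binom{k}{l}$, and one must check this equals $\bar\delta_{[\pi\sigma]}\binom{ik}{jl}$. The support condition matches because $\ker\binom{ik}{jl}\le[\pi\sigma]$ iff $\ker\binom{i}{j}\le\pi$ and $\ker\binom{k}{l}\le\sigma$ (the two horizontal blocks of $[\pi\sigma]$ are independent); and on the support the signs multiply correctly because the signature of a disjoint union of partitions is the product of signatures — this is exactly the kind of assertion packaged into Proposition 7.1. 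The adjoint identity is even softer: transposing the diagram upside-down sends $\ker\binom{i}{j}$ to $\ker\binom{j}{i}$ and $\pi$ to $\pi^*$, while $\varepsilon(\pi^*)=\varepsilon(\pi)$ since turning a diagram upside down does not change the minimal number of switches to reach a noncrossing form; combined with the complex conjugate being trivial here (all coefficients are real), this gives $\bar T_\pi^*=\bar T_{\pi^*}$.

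The composition identity is the main obstacle and the one requiring real work. Writing out $(\bar T_\pi\bar T_\sigma)$ as a sum over the intermediate multi-index $k$, the matrix entry becomes $\sum_k \bar\delta_\sigma\binom{i}{k}\bar\delta_\pi\binom{k}{j}$, and one wants to show this equals $N^{c(\pi,\sigma)}\bar\delta_{[{}^\sigma_\pi]}\binom{i}{j}$. The two separate features to control are: (a) the \emph{magnitude}, i.e. that the number of surviving intermediate $k$'s, for fixed compatible $i,j$, is exactly $N^{c(\pi,\sigma)}$ where $c(\pi,\sigma)$ counts the closed loops of the glued diagram — this part is identical to the untwisted case and is standard; and (b) the \emph{sign}, i.e. that every surviving $k$ contributes the \emph{same} sign, namely $\varepsilon(\ker({}^i_j))$ relative to $[{}^\sigma_\pi]$. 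Feature (b) is the delicate point: a priori different choices of the loop-labels $k$ could give different values of $\varepsilon(\ker\binom{i}{k})\varepsilon(\ker\binom{k}{j})$. The way to handle this is to reduce to the generating cases — I would decompose an arbitrary $\pi,\sigma$ into basic ``capped'' and ``uncapped'' pair-partition pieces (or invoke that $P_{even}$ is generated as a category by a small set of partitions), verify the sign-coherence on those generators by direct inspection, and then propagate through the already-established tensor and adjoint identities together with the multiplicativity of $\varepsilon$ under vertical concatenation as stated in \cite{ba1}. Alternatively, and more in the spirit of Proposition 7.1(3), one argues that the sign discrepancy between a loop-labeled contribution and the ``straightened'' one is always a product of signatures of partitions obtainable by merging blocks of noncrossing partitions, hence equals $1$. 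I expect that citing \cite{ba1} for the well-definedness and morphism properties of $\varepsilon$, and doing the generator check honestly, is the cleanest route; the bookkeeping of which loops are ``inner'' and contribute $N$ versus which legs are ``through-strings'' is routine once the sign issue is settled.
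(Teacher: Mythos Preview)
Your proposal is correct and is essentially a fleshed-out version of what the paper does: the paper's own proof consists of the single sentence ``All this is routine, by using Proposition 7.1, and we refer here to \cite{ba1},'' so you have simply unpacked the contents of that citation. Your identification of the composition identity as the nontrivial part, and of sign-coherence across the loop labels as the crux (handled via Proposition 7.1(3) on mergers of noncrossing blocks), matches the argument in \cite{ba1}.
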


\begin{proof}
All this is routine, by using Proposition 7.1, and we refer here to \cite{ba1}.
\end{proof}

The above result is important for representation theory reasons, because it shows that Definition 7.2 can be used in order to construct new quantum groups. See \cite{ba1}.  

In relation now with our problems, given a partition $\pi\in P_{even}(2s,2s)$, we can use if we want the twisted version $\bar{\varphi}_\pi$ of the linear map $\varphi_\pi$ constructed in section 2. To be more precise, we have the following twisted analogue of Definition 2.3 above:

\begin{definition}
Associated to any partition $\pi\in P_{even}(2s,2s)$ is the linear map
$$\bar{\varphi}_\pi(e_{a_1\ldots a_s,c_1\ldots c_s})=\sum_{\sigma\leq\pi}\varepsilon(\sigma)\sum_{\ker(^{ac}_{bd})=\sigma}e_{b_1\ldots b_s,d_1\ldots d_s}$$
obtained from $\bar{T}_\pi$ by contracting all the tensors, via $e_{i_1}\otimes\ldots\otimes e_{i_{2s}}\to e_{i_1\ldots i_s,i_{s+1}\ldots i_{2s}}$.
\end{definition}

Observe that, in contrast to Definition 2.3, the assumption that $\pi\in P(2s,2s)$ must actually belong to $P_{even}(2s,2s)$ is really needed here, because the signature map $\varepsilon$ is only defined for such partitions, and cannot be extended to the whole $P(2s,2s)$.

As an important remark, due to Proposition 7.1 (3) above, for $\pi\in NC_{even}(2s,2s)$ we have $\bar{\varphi}_\pi=\varphi_\pi$. In other words, in order to reach to some new problems and some new combinatorics, we must restrict the attention to the partitions having crossings. 

Let us begin with a complete study at $s=1$. As a first result here, we have:

\begin{proposition}
For the only partition $\pi\in P_{even}(2,2)$ having crossings, namely
$$\pi=\begin{bmatrix}\circ&\bullet\\ \bullet&\circ\end{bmatrix}$$
we have $\bar{\varphi}_\pi(A)=2A^\delta-A^t$, and the generalized moments of $\bar{\Lambda}_\pi$ are given by
$$(M_\sigma\otimes M_\tau)(\bar{\Lambda}_\pi)=\frac{1}{n^{|\sigma|+|\tau|}}\sum_{\ker i\leq\sigma\tau}(-1)^{\varepsilon_{i_1i_{\sigma(1)}}+\ldots+\varepsilon_{i_pi_{\sigma(p)}}}$$
where $\varepsilon_{ac}=1-\delta_{ac}$, which equals $1$ when $a\neq c$, and equals $0$ when $a=c$.
\end{proposition}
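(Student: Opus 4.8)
The statement has two parts: the explicit formula $\bar{\varphi}_\pi(A)=2A^\delta-A^t$, and the generalized moment formula for $\bar{\Lambda}_\pi$. For the first part, I would start from Definition 7.4 at $s=1$, which reads $\bar{\varphi}_\pi(e_{ac})=\sum_{\sigma\leq\pi}\varepsilon(\sigma)\sum_{\ker({}^{ac}_{bd})=\sigma}e_{bd}$, where $\pi$ is the crossing pairing matching the upper-left point to the lower-right and the upper-right to the lower-left. The sub-partitions $\sigma\leq\pi$ are only two: the pairing $\pi$ itself, with $\varepsilon(\pi)=-1$ since it has one crossing, and the full block (all four legs together), with $\varepsilon=1$. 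Unpacking $\ker({}^{ac}_{bd})=\pi$ forces $a=d$, $c=b$ with $a\neq c$, contributing $-e_{ca}$ for $a\neq c$; unpacking the one-block case forces $a=b=c=d$, contributing $+e_{aa}$. So $\bar{\varphi}_\pi(e_{ac})=\delta_{ac}e_{aa}-(1-\delta_{ac})e_{ca}$, which one rewrites as $2\delta_{ac}e_{aa}-e_{ca}$, i.e.\ $\bar{\varphi}_\pi(A)=2A^\delta-A^t$.

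For the moment formula, the plan is first to compute the Choi--Jamiolkowski matrix $\bar{\Lambda}_\pi$ via $\bar{\Lambda}_{ab,cd}=\bar{\varphi}_\pi(e_{ac})_{bd}$. From the above this gives $\bar{\Lambda}_{ab,cd}=2\delta_{abcd}-\delta_{ad}\delta_{bc}$. Since this matrix is self-adjoint (indeed $\bar{\varphi}_\pi$ is a real linear combination of the self-adjoint maps $A\mapsto A^\delta$ and $A\mapsto A^t$, and $\pi=\pi^\circ$), all exponents can be dropped and we compute $(M_\sigma\otimes M_\tau)(\bar{\Lambda}_\pi)$ using the general tensor-product formula from Section 1. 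The key manipulation is the same as in the proof of Proposition 2.5: the flip-type term $\delta_{ad}\delta_{bc}$ forces $j_{\tau(x)}=i_x$ and $i_{\sigma(x)}=j_x$, so the $j$-indices get eliminated in favor of the $i$-indices via $j_x=i_{\sigma(x)}$, and the surviving constraint on the $i$'s is $\ker i\leq\sigma\tau$ (each constraint $i_x = i_{\sigma\tau(x)}$). Meanwhile the diagonal-type term $2\delta_{abcd}$ should be re-expressed, for each position $x$, as $1+(2\delta_{\cdot}-1)$, and $2\delta_{ac}-1 = (-1)^{\varepsilon_{ac}}$ with $\varepsilon_{ac}=1-\delta_{ac}$. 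Carefully combining the product $\prod_x(2\delta_{i_x i_{\sigma(x)}}-1)$ over the set $\{\ker i\leq\sigma\tau\}$, after the normalization $n^{-|\sigma|-|\tau|}$ is absorbed by the free sums over the eliminated indices, yields exactly $\frac{1}{n^{|\sigma|+|\tau|}}\sum_{\ker i\leq\sigma\tau}(-1)^{\varepsilon_{i_1i_{\sigma(1)}}+\cdots+\varepsilon_{i_pi_{\sigma(p)}}}$.

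The main obstacle is bookkeeping the sign factors correctly: one must verify that after the $j$-indices are summed out, the coefficient attached to each configuration $\ker i\leq\sigma\tau$ is precisely $\prod_x(2\delta_{i_xi_{\sigma(x)}}-1)=(-1)^{\sum_x\varepsilon_{i_xi_{\sigma(x)}}}$, with no leftover powers of $n$ or $2$. This requires tracking how the factor $2$ at each of the $p$ positions interacts with the delta $\delta_{abcd}$ that links the $i$ and $j$ blocks; the clean way is to expand $\bar{\Lambda}_{ab,cd}$ as a genuine matrix entry $\sum_x$-product and observe that the expansion $2\delta_{abcd}-\delta_{ad}\delta_{bc}$ already isolates the flip constraint on every term that survives. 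I would also double-check the $p=1$ case against $(M_\gamma\otimes M_\gamma)(\bar{\Lambda}_\pi)=tr(\bar{\varphi}_\pi(1)/n)$-type sanity checks, and confirm that the formula reduces, when $\sigma=\tau$, to something consistent with the flip/diagonal computations of Proposition 2.5. Once the sign combinatorics is pinned down, the rest is the routine index manipulation already used repeatedly in Sections 2--4.
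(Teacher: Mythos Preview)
Your plan is correct and follows essentially the same route as the paper. The only difference is a bookkeeping simplification that dissolves what you flag as the ``main obstacle'': instead of writing the matrix entry as $\bar{\Lambda}_{ab,cd}=2\delta_{abcd}-\delta_{ad}\delta_{bc}$ and then worrying about how the $2$'s propagate through the $p$-fold product, the paper observes directly that $\bar{\Lambda}_\pi$ is a \emph{signed flip}, i.e.
\[
(\bar{\Lambda}_\pi)_{ab,cd}=(-1)^{\varepsilon_{ac}}\,\delta_{ad}\delta_{bc},
\]
which is of course the same expression (since $2\delta_{abcd}-\delta_{ad}\delta_{bc}=\delta_{ad}\delta_{bc}(2\delta_{ac}-1)$), but already factored. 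With this form, the product $\prod_x(\bar{\Lambda}_\pi)_{i_xj_x,i_{\sigma(x)}j_{\tau(x)}}$ splits immediately into the sign $\prod_x(-1)^{\varepsilon_{i_xi_{\sigma(x)}}}$ times the untwisted flip constraints $\delta_{i_xj_{\tau(x)}}\delta_{j_xi_{\sigma(x)}}$; the latter determine $j_x=i_{\sigma(x)}$ uniquely and leave exactly the condition $\ker i\leq\sigma\tau$, with no stray powers of $n$ or $2$. So your step ``re-express $2\delta_{abcd}$ as $1+(2\delta_\cdot-1)$'' is unnecessary, and the sanity checks you propose, while harmless, are not needed once the factorization is in hand.
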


\begin{proof}
The formula in Definition 7.4 above gives $\bar{\varphi}_\pi(A)=2A^\delta-A^t$, as claimed. As for the corresponding square matrix $\bar{\Lambda}_\pi$, this is given by the following formula:
$$\bar{\Lambda}_\pi(e_a\otimes e_c)=
\begin{cases}
e_a\otimes e_a&{\rm if}\ a=c\\
-e_c\otimes e_a&{\rm if}\ a\neq c
\end{cases}$$

In terms of the opposite Kronecker symbols $\varepsilon_{ac}=1-\delta_{ac}$, as in the statement, this formula can be written in a global way, as follows:
$$\bar{\Lambda}_\pi(e_a\otimes e_c)=(-1)^{\varepsilon_{ac}}e_c\otimes e_a$$

Now in terms of matrix coefficients, we therefore have:
$$(\bar{\Lambda}_\pi)_{ab,cd}=(-1)^{\varepsilon_{ac}}\delta_{ad}\delta_{bc}$$

According now to Definition 1.2, the quantities that we are interested in are:
\begin{eqnarray*}
&&(M_\sigma\otimes M_\tau)(\bar{\Lambda}_\pi)\\
&=&\frac{1}{n^{|\sigma|+|\tau|}}\sum_{i_1\ldots i_p}\sum_{j_1\ldots j_p}(-1)^{\varepsilon_{i_1i_{\sigma(1)}}+\ldots+\varepsilon_{i_pi_{\sigma(p)}}}
\delta_{i_1j_{\tau(1)}}\ldots\delta_{i_pj_{\tau(p)}}
\delta_{j_1i_{\sigma(1)}}\ldots\delta_{j_pi_{\sigma(p)}}\\
&=&\frac{1}{n^{|\sigma|+|\tau|}}\sum_{\ker i\leq\sigma\tau}(-1)^{\varepsilon_{i_1i_{\sigma(1)}}+\ldots+\varepsilon_{i_pi_{\sigma(p)}}}
\end{eqnarray*}

Thus, we have obtained the formula in the statement, and we are done.
\end{proof}

Let us compute now the law of $\bar{\Lambda}_\pi$. According to the general twisting philosophy in \cite{ba1}, the law of $\Lambda_\pi$ should be invariant under twisting, and this is indeed the case:

\begin{proposition}
For the standard crossing $\pi\in P_{even}(2,2)$, we have
$$law(\bar{\Lambda}_\pi)=law(\Lambda_\pi)$$
and this law is the measure $\rho=\frac{n-1}{2n}\delta_{-1}+\frac{n+1}{2n}\delta_1$ from the proof of Theorem 2.6 (2).
\end{proposition}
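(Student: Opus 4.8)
The plan is to show directly that $\bar{\Lambda}_\pi$ and $\Lambda_\pi$ have the same eigenvalues with the same multiplicities, hence the same spectral measure with respect to the normalized trace of $M_n(\mathbb C)\otimes M_n(\mathbb C)=M_{n^2}(\mathbb C)$, which is what $law$ denotes here. The second assertion of the statement is then immediate, since the proof of Theorem 2.6 (2) has already identified $law(\Lambda_\pi)=law(\Lambda_2)$ with the measure $\rho$.

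First I would record two structural facts about $\bar{\Lambda}_\pi$ which do all the work. Starting from the formula $\bar{\Lambda}_\pi(e_a\otimes e_c)=(-1)^{\varepsilon_{ac}}e_c\otimes e_a$ established in the proof of Proposition 7.5, and using $\varepsilon_{ac}=\varepsilon_{ca}$, one gets $\bar{\Lambda}_\pi^2(e_a\otimes e_c)=(-1)^{\varepsilon_{ac}+\varepsilon_{ca}}e_a\otimes e_c=e_a\otimes e_c$, hence $\bar{\Lambda}_\pi^2=1$; and from the matrix entries $(\bar{\Lambda}_\pi)_{ab,cd}=(-1)^{\varepsilon_{ac}}\delta_{ad}\delta_{bc}$, also computed there, one gets $\bar{\Lambda}_\pi^*=\bar{\Lambda}_\pi$. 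Thus $\bar{\Lambda}_\pi$ is a self-adjoint involution, so its spectrum is contained in $\{-1,1\}$ and $law(\bar{\Lambda}_\pi)=\frac{m_+}{n^2}\delta_1+\frac{m_-}{n^2}\delta_{-1}$, where $m_\pm$ are the multiplicities of $\pm1$. These are pinned down by $m_++m_-=n^2$ together with $m_+-m_-=Tr(\bar{\Lambda}_\pi)$.

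Next I would compute this trace. Using the matrix entries above, the diagonal terms are $(\bar{\Lambda}_\pi)_{ab,ab}=(-1)^{\varepsilon_{aa}}\delta_{ab}\delta_{ba}=\delta_{ab}$, so $Tr(\bar{\Lambda}_\pi)=\sum_{ab}\delta_{ab}=n$. This gives $m_+=\frac{n(n+1)}{2}$ and $m_-=\frac{n(n-1)}{2}$, hence $law(\bar{\Lambda}_\pi)=\frac{n+1}{2n}\delta_1+\frac{n-1}{2n}\delta_{-1}=\rho$. Since the flip $\Lambda_\pi=\Lambda_2$ is likewise a self-adjoint involution of trace $n$, it has the very same law $\rho$ — as recorded in the proof of Theorem 2.6 (2) — so $law(\bar{\Lambda}_\pi)=law(\Lambda_\pi)=\rho$, as claimed.

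There is essentially no obstacle here. The one alternative worth recording, which avoids the identity $\bar{\Lambda}_\pi^2=1$, is to match all moments via the formula of Proposition 7.5 at $\sigma=\tau=\gamma$: the condition $\ker i\leq\gamma^2$ forces $i_1,\dots,i_p$ to be constant along the unique cycle of $\gamma^2$ when $p$ is odd, so every $\varepsilon_{i_ki_{\gamma(k)}}$ vanishes and the inner sum equals $n=n^{|\gamma^2|}$; and constant along each of the two cycles of $\gamma^2$ when $p$ is even, so each of the $p$ signs equals a common $(-1)^{\varepsilon_{ab}}$ and $(-1)^{p\varepsilon_{ab}}=1$, giving $n^2=n^{|\gamma^2|}$. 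In both cases $(M_\gamma\otimes M_\gamma)(\bar{\Lambda}_\pi)=n^{|\gamma^2|-2}$, which is exactly $(M_\gamma\otimes M_\gamma)(\Lambda_2)$ by the computation in the proof of Proposition 2.5; since both matrices are self-adjoint, equality of all moments gives equality of laws. The only point to handle with care, in either route, is the two-cycle structure of $\gamma^2$ for even $p$ — equivalently, the eigenvalue bookkeeping for $\bar{\Lambda}_\pi$.
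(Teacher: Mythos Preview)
Your proof is correct. Your main route is genuinely different from the paper's, and more elementary: you observe that $\bar{\Lambda}_\pi$ is a self-adjoint involution, so its spectrum lies in $\{-1,1\}$, and then a single trace computation ($Tr(\bar{\Lambda}_\pi)=n$) fixes the multiplicities and hence the law. The paper instead computes all the moments $(M_\gamma\otimes M_\gamma)(\bar{\Lambda}_\pi)$ via the formula of Proposition 7.5 with $\sigma=\tau=\gamma$, and argues by a parity analysis of the cycle structure of $\gamma^2$ that every sign in the sum is $+1$, so the moments coincide with those of $\Lambda_\pi$; this is exactly the ``alternative'' you sketch in your last paragraph. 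Your involution argument is shorter and avoids the odd/even case split entirely, while the paper's moment route has the advantage of previewing the mechanism (signs collapsing over cycles of $\sigma\tau$) that is reused in the multiplicativity check of Theorem 7.7.
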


\begin{proof}
According to the general theory from section 1 above, the moments of $\bar{\Lambda}_\pi$ can be obtained by setting $\sigma=\tau=\gamma$ in the formula from Proposition 7.5. 

Thus, we must compute the following quantities:
$$(M_\gamma\otimes M_\gamma)(\bar{\Lambda}_\pi)=\frac{1}{n^2}\sum_{\ker i\leq\gamma^2}(-1)^{\varepsilon_{i_1i_2}+\varepsilon_{i_1i_3}+\ldots+\varepsilon_{i_pi_1}}$$

Our claim is that the $\pm1$ numbers appearing on the right are in fact all equal to $1$. Indeed, we have two cases here, depending on whether $p$ is odd or even:

(1) When $p$ is odd the square of $\gamma=(1\to2\to\ldots\to p\to1)$ is once again a full cycle, $\gamma^2=(1\to3\to\ldots\to p\to2\to4\to\ldots\to p-1\to1)$, and so the condition $\ker i\leq\gamma^2$ simply tells us that all the indices of $i$ are equal. Thus, the above opposite Kronecker symbols are all equal to $0$, and so is their sum, and we obtain $1$ as summand.

(2) When $p$ is even the square of $\gamma=(1\to2\to\ldots\to p\to1)$ decomposes as a union of two cycles, namely $(1\to3\to\ldots\to p-1\to1)$ and $(2\to4\to\ldots\to p\to2)$, and so our condition $\ker i\leq\gamma^2$  reads $i_1=i_3=\ldots=i_{p-1}=\alpha$ and $i_2=i_4=\ldots=i_p=\beta$, for certain numbers $\alpha,\beta$. Thus, the above opposite Kronecker symbols have the same value, namely $\varepsilon_{\alpha\beta}$, and since we are summing an even number of them, their global sum equals 0 modulo 2, and we obtain once again 1 as summand, in our formula.

Summarizing, we have proved our claim. Thus, the signatures dissapear in the computation, and by comparing with the computation for $\Lambda_\pi$, from the proof of Proposition 2.5 above, we conclude that we obtain the same law as there, as claimed.
\end{proof}

We can now formulate our final result regarding the crossing, as follows:

\begin{theorem}
For the only partition $\pi\in P_{even}(2,2)$ having crossings, namely
$$\pi=\begin{bmatrix}\circ&\bullet\\ \bullet&\circ\end{bmatrix}$$
we have $\bar{\varphi}_\pi(A)=2A^\delta-A^t$, and the asymptotic distribution of the corresponding block-modified Wishart matrix is the same as the one coming from the map $\varphi_\pi(A)=A^t$.
\end{theorem}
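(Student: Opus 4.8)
The plan is to reduce the statement to the multiplicativity of the twisted matrix $\bar{\Lambda}_\pi$ and then quote the machinery of Section 1. The identity $\bar{\varphi}_\pi(A)=2A^\delta-A^t$ is already Proposition 7.5, so only the distributional claim is at stake. By Theorem 2.6 (2), the block-modified Wishart matrix attached to the transposition $\varphi_\pi(A)=A^t$ has asymptotic law $\pi_{mn\rho}$ with $\rho=\frac{n-1}{2n}\delta_{-1}+\frac{n+1}{2n}\delta_1$, and by Proposition 7.6 this $\rho$ is exactly $law(\bar{\Lambda}_\pi)$. Hence it suffices to show that $\bar{\Lambda}_\pi$ is multiplicative in the sense of Definition 1.7: Theorem 1.8 then gives $m\tilde{W}\sim\pi_{mn\,law(\bar{\Lambda}_\pi)}=\pi_{mn\rho}$, which is the law coming from $\varphi_\pi(A)=A^t$.

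To prove multiplicativity I would reduce it to a parity statement. From $(\bar{\Lambda}_\pi)_{ab,cd}=(-1)^{\varepsilon_{ac}}\delta_{ad}\delta_{bc}$ in Proposition 7.5 one checks that $\bar{\Lambda}_\pi$ is self-adjoint, so the exponents in Definition 1.7 may be dropped and the condition to verify is $(M_\sigma\otimes M_\gamma)(\bar{\Lambda}_\pi)=(M_\sigma\otimes M_\sigma)(\bar{\Lambda}_\pi)$ for all $\sigma\in NC_p$. By the formula of Proposition 7.5,
$$(M_\sigma\otimes M_\tau)(\bar{\Lambda}_\pi)=\frac{1}{n^{|\sigma|+|\tau|}}\sum_{\ker i\leq\sigma\tau}(-1)^{\varepsilon_{i_1i_{\sigma(1)}}+\ldots+\varepsilon_{i_pi_{\sigma(p)}}},$$
and this differs from the untwisted quantity $(M_\sigma\otimes M_\tau)(\Lambda_\pi)$, computed as for $\Lambda_2$ in Proposition 2.5 and equal to $\frac{1}{n^{|\sigma|+|\tau|}}\#\{i:\ker i\leq\sigma\tau\}$, only through the signs. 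Since Proposition 2.4 (2) already gives $(M_\sigma\otimes M_\gamma)(\Lambda_\pi)=(M_\sigma\otimes M_\sigma)(\Lambda_\pi)$, it is enough to show that in both sums the exponent $\varepsilon_{i_1i_{\sigma(1)}}+\ldots+\varepsilon_{i_pi_{\sigma(p)}}$ is even, so that every sign equals $+1$.

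For $\tau=\sigma$ this extends the argument in the proof of Proposition 7.6: if $\ker i\leq\sigma^2$ then on each odd cycle of $\sigma$ the index $i$ is forced constant, so those $\varepsilon$'s vanish, while on each even cycle of length $\ell$ the index $i$ alternates between two values, contributing $\ell\,\varepsilon\equiv0\pmod 2$; no planarity is used here. For $\tau=\gamma$ I would read $\varepsilon_{i_1i_{\sigma(1)}}+\ldots+\varepsilon_{i_pi_{\sigma(p)}}$ as the number of bichromatic edges of the multigraph on the set of cycles of $\sigma\gamma$, having one edge between the cycle of $k$ and the cycle of $\sigma(k)$ for each $k$ (loops dropped), the cycles being colored by $i$; this number is even for every coloring as soon as every edge has even multiplicity. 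The multiplicity of the edge between two distinct cycles $A,B$ of $\sigma\gamma$ equals $|\sigma(A)\cap B|+|\sigma(B)\cap A|$, and using that $A$ and $B$ are $\sigma\gamma$-invariant, so that $\gamma(A)=\sigma^{-1}(A)$ and $\gamma(B)=\sigma^{-1}(B)$, this reduces to the identity $|A\cap\gamma(B)|=|A\cap\gamma^{-1}(B)|$, which holds because $\sigma$ lies in the image of the embedding $NC_p\subset S_p$.

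The last identity is the heart of the matter and the step I expect to be the main obstacle: the non-crossing structure of $\sigma$, equivalently the Kreweras-type combinatorics of $\sigma\gamma$, is genuinely needed here, since the parity claim already fails for $\sigma=\gamma^{-1}$, which is not of that form. Once this combinatorial fact is secured, what remains is the bookkeeping of the first paragraph, namely assembling Propositions 7.5, 7.6, 2.4 and 2.5 together with Theorems 1.8 and 2.6.
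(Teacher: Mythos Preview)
Your reduction to multiplicativity of $\bar{\Lambda}_\pi$, followed by Theorem 1.8 and the identification of the limit law via Proposition 7.6 and Theorem 2.6(2), is exactly the paper's structure. The divergence is in how multiplicativity itself is argued.

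You aim to show that \emph{every} sign in both the $\tau=\sigma$ and the $\tau=\gamma$ sums equals $+1$, so that $(M_\sigma\otimes M_\tau)(\bar\Lambda_\pi)=(M_\sigma\otimes M_\tau)(\Lambda_\pi)$ for these two choices of $\tau$, after which the untwisted multiplicativity (Proposition 2.4(2)) finishes the job. Your $\tau=\sigma$ argument is correct and self-contained. For $\tau=\gamma$ you reduce, via the multigraph reformulation, to the identity $|A\cap\gamma(B)|=|A\cap\gamma^{-1}(B)|$ for distinct $\sigma\gamma$-cycles $A,B$ with $\sigma\in NC_p$, which you do not prove and yourself single out as the crux. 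As written this is the gap in your proposal: the identity is plausible (and your counterexample $\sigma=\gamma^{-1}$ correctly shows noncrossingness is essential), but it is a nontrivial combinatorial fact that still needs an argument.

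The paper takes a different route that sidesteps this identity. It recalls that the untwisted equality $|\sigma\gamma|-1=|\sigma^2|-|\sigma|$ is proved in \cite{bn2} by a bijection with the even blocks of $\sigma$, and then asserts that this same bijection carries the exponents $e_\sigma(i)$ from one side to the other, so that the two \emph{signed} sums coincide directly. In other words, rather than establishing evenness of $e_\sigma(i)$ on the $\tau=\gamma$ side from scratch, the paper transports your easy $\tau=\sigma$ parity computation across the bijection. Your approach, if the missing identity is supplied, would give the stronger statement that the twisted and untwisted generalized moments actually agree for $\tau\in\{\sigma,\gamma\}$; the paper's approach is less explicit about this but leans on an already-available bijection instead of a new combinatorial lemma.
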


\begin{proof}
The first part of the statement, which is there just for completness, is from Proposition 7.5. In order to prove now that Theorem 1.8 applies, we must check the multiplicativity condition $(M_\sigma\otimes M_\gamma)(\bar{\Lambda}_\pi)=(M_\sigma\otimes M_\sigma)(\bar{\Lambda}_\pi)$ from Definition 1.7. By using the general formula from Proposition 7.5, the equality that we want to prove reads:
$$\frac{1}{n^{|\sigma|+1}}\sum_{\ker i\leq\sigma\gamma}(-1)^{\varepsilon_{i_1i_{\sigma(1)}}+\ldots+\varepsilon_{i_pi_{\sigma(p)}}}
=\frac{1}{n^{2|\sigma|}}\sum_{\ker i\leq\sigma^2}(-1)^{\varepsilon_{i_1i_{\sigma(1)}}+\ldots+\varepsilon_{i_pi_{\sigma(p)}}}$$

Let us denote the exponents appearing in this formula by $e_\sigma(i)$. With this notation, and by rescaling, the formula that we want to prove is as follows:
$$\frac{1}{n}\sum_{\ker i\leq\sigma\gamma}(-1)^{e_\sigma(i)}
=\frac{1}{n^{|\sigma|}}\sum_{\ker i\leq\sigma^2}(-1)^{e_\sigma(i)}$$

Now let us recall that in the untwisted case, where all the summands are 1, which formally means setting $e_\sigma=0$ in the above formula, the above equality holds indeed, because this follows from the formula $|\sigma\gamma|-1=|\sigma^2|-|\sigma|$ established in \cite{bn2}, by using the fact that both these numbers count the number of even blocks of $\sigma$.

In the twisted case now, the proof is similar. Indeed, the bijection established in \cite{bn2} with the even blocks of $\sigma$ preserves, at the level of the corresponding indices, the above $e_\sigma(i)$ numbers. Thus when summing, we obtain again the same quantities.

Summarizing, the matrix $\bar{\Lambda}_\pi$ is multiplicative, and so Theorem 1.8 applies. But, in view of Proposition 7.6, we obtain the same law as in the untwisted case, as claimed.
\end{proof}

As already mentioned before Proposition 7.6, such kind of results are in tune with the general Schur-Weyl twisting philosophy in \cite{ba1}, which states that the main combinatorial invariants, such as the Weingarten functions of the corresponding orthogonal groups, should be invariant under twisting. We will see in the next section that this phenomenon extends well beyond the standard crossing setting, into a fully general result. 

\section{The general case}

In this section, we investigate the general twisted case. For this purpose, we will basically follow the decomposition method from the proof of Theorem 6.3 above, with twisting input coming from the various formulae obtained in section 7.

Let us begin with some preliminary results. We first have:

\begin{proposition}
The signature of a symmetric partition $\pi\in P_{even}(2s,2s)$ is given by
$$\varepsilon(\pi)=\prod_t\varepsilon(\pi_t)$$
where $\pi_t$ are the symmetric components of $\pi$.
\end{proposition}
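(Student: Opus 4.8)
The plan is to run, at the level of the signature map, exactly the ``decomposition over components'' argument that was used for the generalized moments in Theorem 3.5. Recall from \cite{ba1} that $\varepsilon(\pi)=(-1)^{c(\pi)}$, where $c(\pi)$ is the minimal number of crossing switches needed to bring $\pi$ to a noncrossing partition, and that a switch never exchanges two legs of the same block. From this one extracts a locality property: if the leg set of $\pi$ is split into several pieces, each a union of blocks, then $c(\pi)$ equals, modulo $2$, the sum of the switch counts performed inside the pieces, plus a correction counting the crossings that occur between a block of one piece and a block of another. Applying this to the finest symmetric decomposition $\pi=[\pi_1,\ldots,\pi_R]$ of Proposition 3.3, and completing each $\pi_t$ into a full element of $P_{even}(2s,2s)$ by the horizontal strings of $\eta$ exactly as in Theorem 3.5, one is led to a formula of the type
$$\varepsilon(\pi)=\left(\prod_{t=1}^R\varepsilon(\pi_t)\right)\cdot\varepsilon(\eta)^{R-1},$$
the factor $\varepsilon(\eta)^{R-1}$ being the analogue of the normalization constant $K$ in the proof of Theorem 3.5, coming from the horizontal strings of the standard pairing $\eta$ that get re-used $R-1$ extra times when the $\pi_t$ are completed.

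It then remains only to check that $\varepsilon(\eta)=1$, which is an explicit computation: the pairing $\eta$ joins upper leg $j$ to upper leg $j+s$ and lower leg $j$ to lower leg $j+s$, and one sees that inside each row all $\frac{s(s-1)}{2}$ pairs of strings cross, while there are no crossings between the two rows; hence $\eta$ has $s(s-1)$ crossings in total, and Proposition 7.1 (2) gives $\varepsilon(\eta)=(-1)^{s(s-1)}=1$. Substituting this back yields $\varepsilon(\pi)=\prod_t\varepsilon(\pi_t)$.

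The delicate step is the first one, namely justifying that $c(\pi)\bmod 2$ genuinely factorizes over the symmetric components. Unlike the index constraints in Theorem 3.5, which are local to a single block, a crossing switch can involve legs belonging to two \emph{different} components, so the cross-component crossings must be shown to contribute an even total. This is precisely where the hypothesis $\pi=\pi^\circ$ enters: each component $\pi_t$ is stable under the blockwise middle symmetry $\circ$, which is a fixed-point-free involution of the columns preserving the crossing structure, so the crossings between any two distinct components are permuted among themselves by $\circ$ and therefore occur in cancelling pairs, up to $\circ$-invariant configurations whose absence (or even count) one checks against the short list of component types in Proposition 3.3. I expect this parity bookkeeping, together with fixing a precise combinatorial model for $\varepsilon$ on two-row partitions so that ``crossing across pieces'' is unambiguous, to be the main technical work; everything else is an imitation of Section 3.
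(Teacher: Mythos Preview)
Your proposal is workable but takes a longer route than the paper. The paper does not complete the components with horizontal strings and does not introduce the correction factor $\varepsilon(\eta)^{R-1}$ at all. Instead it argues directly: since each symmetric component $\pi_t$ satisfies $\pi_t\in P_{even}$ and $\pi_t=\pi_t^\circ$, it has an even number of upper legs and an even number of lower legs; using this, one can rearrange the legs of $\pi$ so that it becomes the ordinary side-by-side tensor product $\pi'=\pi_1\otimes\cdots\otimes\pi_R$, and the parity observation on leg counts forces this rearrangement to use an even number of switches. Hence $\varepsilon(\pi)=\varepsilon(\pi')=\prod_t\varepsilon(\pi_t)$ in one stroke.

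So the ``delicate step'' you isolate --- showing that cross-component crossings contribute evenly --- is exactly what the paper handles, but via the even-leg-count remark rather than via a $\circ$-pairing of crossings followed by case-checking against Proposition~3.3. Your detour through $\eta$-completion and the computation $\varepsilon(\eta)=1$ is correct but unnecessary: it reintroduces crossings (those between a component and the added $\eta$-strings) that then have to be cancelled again by the $\varepsilon(\eta)^{R-1}$ factor, whereas the paper's tensor-product rearrangement never creates them. Note also that the intermediate identity $\varepsilon(\pi)=\bigl(\prod_t\varepsilon(\pi_t)\bigr)\varepsilon(\eta)^{R-1}$ you write down by analogy with Theorem~3.5 is not a formal consequence of that theorem and would itself need exactly the kind of parity justification you defer; the paper's even-leg argument is the cleaner way to supply it.
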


\begin{proof}
Since our partition satisfies $\pi\in P_{even}(2s,2s)$ and $\pi=\pi^\circ$, its components satisfy as well these conditions, $\pi_t\in P_{even}(2s,2s)$ and $\pi_t=\pi_t^\circ$. In particular, any of these partitions $\pi_t$ has the property that both the number of upper legs, and of lower legs, is even.

Now let us rearrange these blocks, as for our partition $\pi$ to become the usual tensor product $\pi'=\pi_1\otimes\ldots\otimes\pi_t$. Due to the above remark regarding the upper and lower legs of each $\pi_t$, the number of required switches for this operation is even, and so the signature does not change. In other words, we have $\varepsilon(\pi)=\varepsilon(\pi')$, and this gives the result.
\end{proof}

We will need as well the following technical observation:

\begin{proposition}
For the standard pairing $\eta\in P_{even}(2s,2s)$ having horizontal strings,
$$\eta=\begin{bmatrix}
a&b&c&\ldots&a&b&c&\ldots\\
\alpha&\beta&\gamma&\ldots&\alpha&\beta&\gamma&\ldots
\end{bmatrix}$$
we have $\bar{\Lambda}_\eta=\Lambda_\eta$, and so $(M_\sigma\otimes M_\tau)(\bar{\Lambda}_\eta)=(M_\sigma\otimes M_\tau)(\Lambda_\eta)=1$, for any $\sigma,\tau$.
\end{proposition}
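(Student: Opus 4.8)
The plan is to show that the twisted map $\bar T_\eta$ associated to the horizontal pairing $\eta$ coincides with the untwisted map $T_\eta$, and hence that the contracted maps $\bar\varphi_\eta$ and $\varphi_\eta$ agree, giving $\bar\Lambda_\eta = \Lambda_\eta$; the moment statement then follows immediately from Proposition 3.4. The essential point is that $\eta$ is noncrossing: each of its blocks is a pairing $\{i\}-\{i+s\}$ connecting one upper leg to one lower leg in the identity pattern, and when these blocks are drawn in the plane they do not cross. So $\eta \in NC_{even}(2s,2s)$.

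First I would invoke the remark made in the excerpt right after Definition 7.4, namely that for any $\pi \in NC_{even}(2s,2s)$ one has $\bar\varphi_\pi = \varphi_\pi$ (this is a consequence of Proposition 7.1(3): a noncrossing partition needs zero switches to be made noncrossing, so $\varepsilon(\eta) = 1$, and moreover in the sum $\sum_{\sigma \le \pi}\varepsilon(\sigma)$ defining $\bar T_\pi$, the only relevant refinements that actually contribute compatibly reduce things to the single-term Kronecker symbol — more precisely, for noncrossing $\pi$ all sub-partitions $\sigma \le \pi$ that arise are themselves obtained from noncrossing ones by merging, hence have signature $1$, and the alternating sum collapses to $\delta_\pi$). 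Concretely, for $\eta$ itself this is transparent: $\eta$ is a pairing, so any $\sigma \le \eta$ is a pairing or a coarsening, and unwinding Definition 7.2 one sees $\bar T_\eta(e_{i_1}\otimes\cdots\otimes e_{i_{2s}}) = \delta_\eta(i)\, e_{i_1}\otimes\cdots\otimes e_{i_{2s}} = T_\eta(e_{i_1}\otimes\cdots\otimes e_{i_{2s}})$, since $\varepsilon$ is identically $1$ on the (noncrossing) pairings involved.

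Having established $\bar T_\eta = T_\eta$, I would contract the tensors via $e_{i_1}\otimes\cdots\otimes e_{i_{2s}} \to e_{i_1\ldots i_s, i_{s+1}\ldots i_{2s}}$ exactly as in Definitions 2.3 and 7.4, obtaining $\bar\varphi_\eta = \varphi_\eta$, and then pass to the Choi--Jamio{\l}kowski matrices through the convention $\Lambda_{ab,cd} = \varphi(e_{ac})_{bd}$ to conclude $\bar\Lambda_\eta = \Lambda_\eta$. Finally, since the generalized $*$-moments $(M_\sigma^e \otimes M_\tau^e)(\cdot)$ depend only on the matrix, we get $(M_\sigma \otimes M_\tau)(\bar\Lambda_\eta) = (M_\sigma \otimes M_\tau)(\Lambda_\eta)$, and the latter equals $1$ for all $\sigma,\tau \in NC_p$ by Proposition 3.4. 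There is essentially no obstacle here: the statement is a routine consequence of the noncrossingness of $\eta$ together with the already-cited fact that twisting is trivial on noncrossing partitions; the only mild care needed is to make explicit that $\eta$, being the identity pairing, is indeed noncrossing, so that Proposition 7.1(3) and the post-Definition-7.4 remark both apply directly.
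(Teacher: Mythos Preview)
There is a genuine gap: your claim that $\eta$ is noncrossing is false for $s\geq 2$, and the whole argument rests on it. You have misread the partition. In the display
\[
\eta=\begin{bmatrix}
a&b&c&\ldots&a&b&c&\ldots\\
\alpha&\beta&\gamma&\ldots&\alpha&\beta&\gamma&\ldots
\end{bmatrix}
\]
the Latin letters label the upper row and the Greek letters label the lower row, and they are \emph{different} blocks. Thus the block ``$a$'' is $\{$upper $1$, upper $s{+}1\}$, the block ``$b$'' is $\{$upper $2$, upper $s{+}2\}$, and so on; none of the blocks of $\eta$ connects an upper leg to a lower leg. Already at $s=2$ the upper-row blocks $\{1,3\}$ and $\{2,4\}$ cross, and likewise in the lower row; in general each pair of upper blocks crosses once, giving $\binom{s}{2}$ crossings on top and the same on the bottom. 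So $\eta\notin NC_{even}(2s,2s)$, and neither the post--Definition~7.4 remark nor Proposition~7.1(3) applies as you invoke them.

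The paper's proof does not go through noncrossingness at all. It argues directly that $\varepsilon(\eta)=1$ (the number of switches needed is $2\binom{s}{2}=s(s-1)$, which is even) and that the same holds for all relevant subpartitions $\sigma\leq\eta$; hence every term in the sum defining $\bar T_\eta$ carries the sign $+1$, and $\bar T_\eta=T_\eta$. From there the conclusion $\bar\Lambda_\eta=\Lambda_\eta$ and the moment identity follow exactly as you wrote in your last paragraph. To repair your argument, replace the (incorrect) noncrossingness claim by this signature computation; note in particular that since $\eta$ is a pairing, any $\sigma\leq\eta$ lying in $P_{even}$ must equal $\eta$ itself, so only $\varepsilon(\eta)=1$ is actually needed.
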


\begin{proof}
The equality $\bar{\Lambda}_\eta=\Lambda_\eta$ is clear at $s=1$, and at $s=2$ this follows from the fact that $\eta$ needs 2 switches in order to be made noncrossing, and so its signature is 1. In general, this follows from the fact that $\eta$, as well as all its subpartitions, all have signature 1. As for the last assertion, this is clear from $(M_\sigma\otimes M_\tau)(\Lambda_\eta)=1$, for any $\sigma,\tau$.
\end{proof}

With the above results in hand, we have the following analogue of Theorem 3.5:

\begin{proposition}
Given a symmetric partition $\pi\in P_{even}(2s,2s)$, the quantities
$$(M_\sigma\otimes M_\tau)(\bar{\Lambda}_\pi)$$
decompose as elementary products, over the symmetric blocks of $\pi$.
\end{proposition}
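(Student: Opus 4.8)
The plan is to replay the proof of Theorem 3.5, now carrying along the signature weights introduced by the twisting. First I would record, arguing exactly as in Proposition 3.1, that $\bar{\Lambda}_\pi^*=\bar{\Lambda}_{\pi^\circ}=\bar{\Lambda}_\pi$ in the symmetric case, and then, using the formula in Definition 7.4, rewrite each matrix entry in the compact form $(\bar{\Lambda}_\pi)_{ab,cd}=\varepsilon\big(\ker(^{ac}_{bd})\big)\,\delta_\pi(^{ac}_{bd})$. Expanding $(M_\sigma\otimes M_\tau)(\bar{\Lambda}_\pi)$ exactly as in Proposition 3.2, and using that the various $e_x$ exponents disappear in the symmetric case, one is led to a signed count
$$(M_\sigma\otimes M_\tau)(\bar{\Lambda}_\pi)=\frac{1}{n^{|\sigma|+|\tau|}}\sum_{[ij]}\ \prod_{x=1}^p\varepsilon(\rho_x),$$
the sum being over the double arrays $[ij]$ all of whose local kernels $\rho_x$ are $\leq\pi$, and $\rho_x$ denoting that kernel at row $x$. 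This is the twisted analogue of the formula used in Theorem 3.5, the only new feature being the product of $\pm1$ signs.

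Next I would import the combinatorial skeleton of the proof of Theorem 3.5 essentially verbatim: completing each symmetric component $\pi_t$ (in the sense of Proposition 3.3) with the horizontal strings of the standard pairing $\eta$, the set of admissible arrays $[ij]$ decomposes as a product over $t$, and the normalisation constant works out to $1$. The only change here is that Proposition 3.4 is replaced by Proposition 8.2, which gives $\bar{\Lambda}_\eta=\Lambda_\eta$ and $(M_\sigma\otimes M_\tau)(\bar{\Lambda}_\eta)=1$, so that the $R-1$ copies of the $\eta$-strings contribute precisely the expected $n^{(|\sigma|+|\tau|)(R-1)}$ factor, exactly as before. The genuinely new step is to check that the sign $\prod_x\varepsilon(\rho_x)$ factors over the components as well. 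For this I would invoke the switching mechanism behind Proposition 8.1: a local kernel $\rho_x$ is a coarsening of the component-structured symmetric partition $\pi$, hence is itself symmetric and compatible with the component partition of the legs, so it can be displayed as a tensor product $\rho_x=\rho_x^{(1)}\otimes\cdots\otimes\rho_x^{(R)}$ along the components at the cost of an even number of switches — each component, and hence each restricted kernel, carrying an even number of upper and of lower legs — whence $\varepsilon(\rho_x)=\prod_t\varepsilon(\rho_x^{(t)})$. Multiplying over $x$ and regrouping gives $\prod_x\varepsilon(\rho_x)=\prod_t\big(\prod_x\varepsilon(\rho_x^{(t)})\big)$, which is exactly the product of the sign factors attached to the individual completed components $\pi_t$.

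Combining the last two steps yields $(M_\sigma\otimes M_\tau)(\bar{\Lambda}_\pi)=\prod_{t=1}^R(M_\sigma\otimes M_\tau)(\bar{\Lambda}_{\pi_t})$, each $\pi_t$ completed with horizontal strings, which is the desired decomposition. I expect the main obstacle to lie in the signature factorisation of the previous paragraph: the delicate point is whether a local kernel $\rho_x$ can contain a block straddling two distinct components — an ``accidental'' coincidence of indices across components — in which case the restrictions $\rho_x^{(t)}$ are not literally well defined and the switching count must be argued more carefully. I would handle this either by exhibiting a sign-reversing involution on the admissible arrays that pairs up and kills the straddling contributions, or — more in the spirit of the proof of Theorem 3.5 — by using the ``trivial column'' analysis from the proof of Proposition 3.4 to show that, after completion with the $\eta$-strings, the index blocks living in different components are genuinely decoupled, so that $\rho_x$ does respect the component partition of the legs and the factorisation of $\varepsilon$ goes through unobstructed.
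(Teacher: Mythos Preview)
Your outline matches the paper's proof essentially step for step: write the twisted generalized moment as a signed sum over admissible index arrays, split the admissibility condition over the symmetric components exactly as in Theorem~3.5, invoke the even-switch column rearrangement of Proposition~8.1 to factor the signatures $\varepsilon(\rho_x)$ over the components, and use Proposition~8.2 to show that the horizontal-string completions contribute a trivial normalisation. The only difference is one of emphasis: the paper simply asserts the pointwise identity $\varepsilon[A_x]=\prod_t\varepsilon[A_x^t]$ by reference to the same switching mechanism, without isolating the straddling-block subtlety you raise, so your caution there is if anything more careful than the original argument.
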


\begin{proof}
We follow the proof from the untwisted case, from section 3 above. First, in the twisted case, the formula from Proposition 3.1 becomes:
\begin{eqnarray*}
(\bar{\Lambda}_\pi)_{a_1\ldots a_s,b_1\ldots b_s,c_1\ldots c_s,d_1\ldots d_s}
&=&\varepsilon\begin{bmatrix}a_1&\ldots&a_s&c_1&\ldots&c_s\\ b_1&\ldots&b_s&d_1&\ldots&d_s\end{bmatrix}\\
&&\times\,\delta_\pi\begin{pmatrix}a_1&\ldots&a_s&c_1&\ldots&c_s\\ b_1&\ldots&b_s&d_1&\ldots&d_s\end{pmatrix}
\end{eqnarray*}

Regarding now the formula in Proposition 3.2, its twisted analogue, in the case of the self-adjoint partitions, that we are interested in here, is as follows:
\begin{eqnarray*}
(M_\sigma\otimes M_\tau)(\bar{\Lambda}_\pi)
&=&\frac{1}{n^{|\sigma|+|\tau|}}\sum_{i_1^1\ldots i_p^s}\sum_{j_1^1\ldots j_p^s}\prod_x\varepsilon\begin{bmatrix}i_x^1&\ldots&i_x^s&i_{\sigma(x)}^1&\ldots&i_{\sigma(x)}^s\\
j_x^1&\ldots&j_x^s&j_{\tau(x)}^1&\ldots&j_{\tau(x)}^s\end{bmatrix}\\
&&\hskip35mm\times\,\delta_\pi\begin{pmatrix}i_x^1&\ldots&i_x^s&i_{\sigma(x)}^1&\ldots&i_{\sigma(x)}^s\\
j_x^1&\ldots&j_x^s&j_{\tau(x)}^1&\ldots&j_{\tau(x)}^s\end{pmatrix}
\end{eqnarray*}

Consider the arrays of multi-indices appearing on the left, namely:
$$A_x=\begin{pmatrix}i_x^1&\ldots&i_x^s&i_{\sigma(x)}^1&\ldots&i_{\sigma(x)}^s\\
j_x^1&\ldots&j_x^s&j_{\tau(x)}^1&\ldots&j_{\tau(x)}^s\end{pmatrix}$$

In terms of these arrays, our moment formula simply becomes:
$$(M_\sigma\otimes M_\tau)(\bar{\Lambda}_\pi)
=\frac{1}{n^{|\sigma|+|\tau|}}\sum_{[A_x]\leq\pi,\forall x}\varepsilon[A_x]$$

Now observe that, if we denote by $C_t\subset\{1,\ldots,s\}$ the set of left columns affected by $\pi_t$, so that $C_t+s$ is the set of right columns affected by $\pi_t$, the conditions $[A_x]\leq\pi,\forall x$ split over the symmetric components $\pi_t$, because these conditions affect precisely the $C_t$-th columns of the arrays of indices $i,j$. Thus, our summation decomposes as follows, where $K$ is a certain normalization constant, coming from the horizontal strings:
$$(M_\sigma\otimes M_\tau)(\bar{\Lambda}_\pi)
=\frac{K}{n^{|\sigma|+|\tau|}}\sum_{[A_x^t]\leq\pi_t,\forall x,t}\varepsilon[A_x]$$

We must prove now that the signatures on the right decompose as:
$$\varepsilon[A_x]=\prod_t\varepsilon[A_x^t]$$

But this follows by performing precisely the switches that we used in the proof of Proposition 8.2 above, whose total number is even, and which therefore do not change the signature. We conclude that we have the following formula:
\begin{eqnarray*}
(M_\sigma\otimes M_\tau)(\bar{\Lambda}_\pi)
&=&\frac{K}{n^{|\sigma|+|\tau|}}\sum_{[A_x^t]\leq\pi_t,\forall x,t}\prod_t\varepsilon[A_x^t]\\
&=&\frac{K}{n^{|\sigma|+|\tau|}}\prod_t\sum_{[A_x^t]\leq\pi_t,\forall x}\varepsilon[A_x^t]
\end{eqnarray*}

Thus, we have a decomposition over the symmetric components of $\pi$, as indicated in the statement. As for the computation of the normalization factor, we can use here Proposition 8.2, and we conclude that this factor is 1, as in the untwisted case.
\end{proof}

With these ingredients in hand, we can now prove:

\begin{theorem}
Assuming that $\pi\in P_{even}(2s,2s)$ is symmetric, and is such that $\varphi_\pi,\varphi_{\pi^*}$ are both unital modulo scalars, the $d\to\infty$ asymptotic law of the associated block-modified Wishart matrix $\tilde{W}=(id\otimes\bar{\varphi}_\pi)W$ is the same as in the untwisted case.
\end{theorem}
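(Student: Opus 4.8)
The plan is to run the argument of Theorem 6.3 in the twisted setting, feeding in the decomposition of Proposition 8.3 together with the classification of Theorem 5.4 and the $s=1$ computations of Section 7. Since $\pi$ is symmetric, $\bar\Lambda_\pi$ is self-adjoint — this is the twisted analogue of Proposition 3.1, and it is already implicit in the proof of Proposition 8.3, where the $*$-exponents are dropped. As in the proof of Proposition 2.5, this lets us reduce the multiplicativity condition of Definition 1.7 to the exponent-free identity $(M_\sigma\otimes M_\gamma)(\bar\Lambda_\pi)=(M_\sigma\otimes M_\sigma)(\bar\Lambda_\pi)$ for $\sigma\in NC_p$, and lets us read off $\mathrm{law}(\bar\Lambda_\pi)$ from its ordinary moments $(M_\gamma\otimes M_\gamma)(\bar\Lambda_\pi)$.

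First I would apply Proposition 8.3 to write $(M_\sigma\otimes M_\tau)(\bar\Lambda_\pi)=\prod_t(M_\sigma\otimes M_\tau)(\bar\Lambda_{\pi_t})$, the product being over the symmetric components $\pi_t$ of $\pi$, each completed with horizontal strings. By Theorem 5.4, the hypothesis that $\varphi_\pi$ and $\varphi_{\pi^*}$ are both unital modulo scalars forces every $\pi_t$ to be a copy of one of the four elements of $P_{even}(2,2)$. Three of these are noncrossing, and for them the remark following Definition 7.4 gives $\bar\varphi_{\pi_t}=\varphi_{\pi_t}$, hence $\bar\Lambda_{\pi_t}=\Lambda_{\pi_t}$; by Proposition 2.5 the associated function $(\sigma,\tau)\mapsto(M_\sigma\otimes M_\tau)(\Lambda_{\pi_t})$ is then multiplicative. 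The fourth is the standard crossing, for which Theorem 7.7 — more precisely its proof — establishes exactly $(M_\sigma\otimes M_\gamma)(\bar\Lambda_{\pi_t})=(M_\sigma\otimes M_\sigma)(\bar\Lambda_{\pi_t})$, so this factor is multiplicative too. Since a product of functions each satisfying $f(\sigma,\gamma)=f(\sigma,\sigma)$ satisfies the same identity, $\bar\Lambda_\pi$ is multiplicative in the sense of Definition 1.7, and Theorem 1.8 gives $m\tilde W\sim\pi_{mn\bar\rho}$ in $*$-moments as $d\to\infty$, with $\bar\rho=\mathrm{law}(\bar\Lambda_\pi)$.

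It then remains to identify $\bar\rho$ with the untwisted law $\rho=\mathrm{law}(\Lambda_\pi)$ of Theorem 6.3. Taking $\sigma=\tau=\gamma$ in the decomposition above gives $(M_\gamma\otimes M_\gamma)(\bar\Lambda_\pi)=\prod_t(M_\gamma\otimes M_\gamma)(\bar\Lambda_{\pi_t})$. Each noncrossing factor equals $(M_\gamma\otimes M_\gamma)(\Lambda_{\pi_t})$ because $\bar\Lambda_{\pi_t}=\Lambda_{\pi_t}$, while the single crossing factor equals $(M_\gamma\otimes M_\gamma)(\Lambda_{\pi_t})$ by Proposition 7.6, which asserts precisely $\mathrm{law}(\bar\Lambda_{\pi_t})=\mathrm{law}(\Lambda_{\pi_t})$ for the standard crossing. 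Comparing with the untwisted decomposition of Theorem 3.5 we obtain $(M_\gamma\otimes M_\gamma)(\bar\Lambda_\pi)=(M_\gamma\otimes M_\gamma)(\Lambda_\pi)$ for all $p$, hence $\bar\rho=\rho$, so $m\tilde W\sim\pi_{mn\rho}$, which is exactly the asymptotic law obtained in Theorem 6.3 for the untwisted map $\varphi_\pi$.

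The technically demanding ingredients — the signature-preserving decomposition over components (Proposition 8.3) and the sign cancellation for the standard crossing (Theorem 7.7 and Proposition 7.6) — are already in place, so the remaining difficulty is mostly organizational: keeping separate the two outputs, namely that multiplicativity of $\bar\Lambda_\pi$ plus Theorem 1.8 yields a compound free Poisson limit, and that the governing law $\bar\rho$ coincides with the untwisted $\rho$. The only point deserving an explicit line is the self-adjointness reduction noted above, which is what makes it legitimate to work throughout with the signature map $\varepsilon$, defined only on $P_{even}$, without having to track the exponents $e\in\{1,*\}$.
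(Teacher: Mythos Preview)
Your proposal is correct and follows essentially the same route as the paper: decompose via Proposition 8.3, invoke Theorem 5.4 to reduce each symmetric component to one of the four basic $P_{even}(2,2)$ partitions, handle the crossing component with Theorem 7.7 and Proposition 7.6, and then apply Theorem 1.8. Your write-up is in fact more explicit than the paper's own proof, which compresses the noncrossing/crossing case split and the identification $\bar\rho=\rho$ into the single sentence ``the result follows from Theorem 7.7''; the only place to be slightly careful is that the remark after Definition 7.4 applies at $s=1$, so when you cite it for the three noncrossing components you are implicitly using that the horizontal-string completion contributes trivially (Proposition 8.2), which reduces each factor to its native $s=1$ incarnation.
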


\begin{proof}
Let us first check that $\bar{\Lambda}_\pi$ is multiplicative. By using Proposition 8.3 we have to check this over the various symmetric components of $\pi$. On the other hand, we know from Theorem 5.3 above that these symmetric components appear as copies of the 4 basic partitions, from $P_{even}(2,2)$. Thus, the result follows from Theorem 7.7.

Summarizing, Theorem 1.8 applies. Regarding now the $d\to\infty$ asymptotic law that we obtain, we must work out here the twisted analogue of the formula in Proposition 6.2. But, once again due to Theorem 7.7, this formula must be invariant under twisting, and we are done. Note that this is in tune with the general twisting philosophy from \cite{ba1}.
\end{proof}

Summarizing, we have now a block-modification theory for some suitable partitions $\pi\in P_{even}(2s,2s)$, which works both in the untwisted and the twisted case.

As already mentioned, our main motivation for this extension comes from a potential application to the free Bessel laws, and this will be discussed in what follows. However, Theorem 8.4 has as well its own interest, supporting the twisting philosophy from \cite{ba1}.

\section{Free Bessel laws}

In this section and in the next one we go back to the general complex setting, with the aim of recovering the free Bessel laws, constructed in \cite{bb+}, by using our formalism.

The free Bessel laws are compound free Poisson laws, constructed as follows:

\begin{definition}
The free Bessel law, depending on $n\in\mathbb N\cup\{\infty\}$ and on $t>0$, is
$$\beta^n_t=\pi_{t\eta_n}$$
where $\eta_n$ is the uniform measure on the $n$-th roots of unity.
\end{definition}

The terminology here comes from a similarity with the Bessel laws, also introduced in \cite{bb+}, which is best understood in terms of the Bercovici-Pata bijection \cite{bpa}. At $n=1$ we obtain the free Poisson laws, while at $n=2$ we obtain the laws constructed and studied \cite{bbc}. In general, the free Bessel laws appear as laws of truncated characters of certain compact quantum Lie groups in the sense of Woronowicz \cite{wor}.

As explained in \cite{bb+}, the moments of $\beta^n_1$ can be recovered by using the certain block-modified Wishart matrices, with the modification map being the one which multiplies by a diagonal matrix whose eigenvalues are uniformly distributed $n$-th roots of unity.

By using the methods in section 1 above, we can now perform a finer study of this phenomenon, at the $*$-moment level, and at any $t>0$. First, we have: 

\begin{proposition}
For the block-modified matrix $\tilde{W}=(id\otimes\varphi)W$, where $\varphi(A)=EA$, with $E=diag(1,w,\ldots,w^{n-1})$ with $w=e^{2\pi i/n}$, we have
$$\lim_{d\to\infty}M_p^e\left(\tilde{W}\right)=\sum_{\tau\in NC_p^e(n)}\left(\frac{n}{m}\right)^{|\tau|-1}$$
where $NC_p^e(n)\subset NC_p$ is the set of partitions $\pi$ having the property that any block of $\pi$, when weighted by $e$, must have as size a multiple of $n$. 
\end{proposition}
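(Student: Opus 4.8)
The plan is to compute these $*$-moments straight from Proposition 1.4, without invoking Theorem 1.8: the Choi matrix of $\varphi(A)=EA$ is not multiplicative in the sense of Definition 1.7 once $n\geq 2$. First I would record this matrix. Indexing the $n$-th roots of unity by $a\in\{0,\dots,n-1\}$, so that $w^{a}$ is the $a$-th diagonal entry of $E$, we have $\varphi(e_{ac})=Ee_{ac}=w^{a}e_{ac}$, hence by Definition 1.3 $\Lambda=\sum_{ac}w^{a}\,e_{ac}\otimes e_{ac}$, i.e. $\Lambda_{ab,cd}=w^{a}\delta_{ab}\delta_{cd}$ and $\Lambda^{*}_{ab,cd}=w^{-c}\delta_{ab}\delta_{cd}$. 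Since $M_{p}^{e}(m\tilde W)=m^{p}M_{p}^{e}(\tilde W)$ for the real scalar $m$, Proposition 1.4 reduces everything to evaluating $(M_{\sigma}^{e}\otimes M_{\gamma}^{e})(\Lambda)$ for each $\sigma\in NC_{p}$.

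For that evaluation I would substitute the above entries into the generalized $*$-moment formula from Section 1. In every factor the Kronecker part is $\delta_{i_{s}j_{s}}\delta_{i_{\sigma(s)}j_{s+1}}$ (the same for both exponents), so it forces $j_{s}=i_{s}$ for all $s$ together with $i_{\sigma(s)}=i_{s+1}$ for all $s$; the latter says precisely that $i$ is constant on the cycles of $\sigma\gamma^{-1}$. The phases survive this reduction as a product $\prod_{B}w^{C_{B}i_{B}}$ over the cycles $B$ of $\sigma\gamma^{-1}$, where $i_{B}$ is the common value of $i$ on $B$ and $C_{B}$ is obtained by adding, over the legs $t\in B$, the elementary charge $[e_{t}=1]-[e_{t-1}=*]$ (the $+$ part coming from the $w^{a}$ carried by $\Lambda$ at legs with $e_{t}=1$, the $-$ part from the $w^{-c}$ carried by $\Lambda^{*}$ at legs with $e_{t}=*$, indices cyclic mod $p$). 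Carrying out the geometric sums over the $i_{B}\in\{0,\dots,n-1\}$ gives $n^{|\sigma\gamma^{-1}|}$ when $n$ divides $C_{B}$ for every cycle $B$, and $0$ otherwise. Dividing by the normalization $n^{|\sigma|+1}$ and using $|\sigma|+|\sigma\gamma^{-1}|=p+1$ for $\sigma\in NC_{p}$, this yields $(M_{\sigma}^{e}\otimes M_{\gamma}^{e})(\Lambda)=n^{p-2|\sigma|}$ when all cycles of $\sigma\gamma^{-1}$ have charge a multiple of $n$, and $0$ otherwise.

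Finally I would assemble. Substituting into Proposition 1.4 and dividing by $m^{p}$, the $m$- and $n$-powers collapse to $m^{-p}(mn)^{|\sigma|}n^{p-2|\sigma|}=(n/m)^{p-|\sigma|}$, so $\lim_{d\to\infty}M_{p}^{e}(\tilde W)=\sum(n/m)^{p-|\sigma|}$, the sum running over those $\sigma\in NC_{p}$ whose Kreweras complement has all blocks of charge divisible by $n$. The Kreweras complement $\sigma\mapsto\tau$ is a bijection of $NC_{p}$ with $|\tau|=|\sigma\gamma^{-1}|=p+1-|\sigma|$, so $p-|\sigma|=|\tau|-1$, and the cycle-charge condition on $\sigma\gamma^{-1}$ becomes a condition on the blocks of $\tau$; unwinding the weighting convention, this is exactly membership in $NC_{p}^{e}(n)$, and we obtain $\lim_{d\to\infty}M_{p}^{e}(\tilde W)=\sum_{\tau\in NC_{p}^{e}(n)}(n/m)^{|\tau|-1}$.

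The step I expect to be the real work is the phase bookkeeping above: the charge attached to a leg depends on that leg's exponent and on its cyclic predecessor's, so one needs a short combinatorial identity to see that summing these charges over a block of the Kreweras complement reproduces the ``$e$-weighted size'' used in the statement. A useful cross-check is the all-ones case $e=(1,\dots,1)$: then every leg has charge $1$, so $C_{B}=|B|$, and at $m=n$ one recovers the block-modified-Wishart description of $\beta^{n}_{1}$ from \cite{bb+}; the cases $n=1$ (where $\varphi=id$ and $\tilde W=W$) and $p\leq 2$ can also be checked by a direct Wick computation, the latter giving $1+n/m$ for $p=2$ in agreement with the formula.
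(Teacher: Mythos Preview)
Your approach is essentially the paper's own: compute $\Lambda$ from $\varphi(A)=EA$, plug into Proposition~1.4, evaluate the generalized $*$-moments by summing out the Kronecker constraints and the root-of-unity phases, use $|\sigma|+|\sigma\gamma^{-1}|=p+1$, and then reindex via the Kreweras complement $\sigma\mapsto\tau=\widehat{\sigma}$. The paper carries this out with the shorthand phase $w^{e_1i_1+\cdots+e_pi_p}$ (converting $e_s\in\{1,*\}$ to $e_s\in\{1,-1\}$) and gets directly the block condition $\sum_{b\in\beta}e_b\equiv 0\ ({\rm mod}\ n)$ on the cycles $\beta$ of $\sigma\gamma^{-1}$, which under Kreweras becomes the defining condition for $NC_p^e(n)$.

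One remark on your phase bookkeeping, which you rightly flag as the delicate point: your charge $C_B=\sum_{t\in B}\big([e_t{=}1]-[e_{t-1}{=}*]\big)$ is indeed what falls out of $\Lambda^*_{ab,cd}=w^{-c}\delta_{ab}\delta_{cd}$, and your justification via $\sigma^{-1}(B)=\gamma^{-1}(B)$ for cycles $B$ of $\sigma\gamma^{-1}$ is the right mechanism. The paper short-circuits this by writing the phase as $w^{\sum_s e_s i_s}$ from the outset, which amounts to the same block condition once one absorbs the cyclic shift into the Kreweras relabelling; your more explicit accounting is a safer way to see why the ``$e$-weighted size'' condition on the blocks of $\tau$ is exactly what survives. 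In short: same proof, with your version being a bit more scrupulous about where the $*$-phase lands before the Kreweras change of variables.
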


\begin{proof}
We use Proposition 1.4 above. For the linear map $\varphi(A)=EA$ in the statement we have $\Lambda_{ab,cd}=\delta_{ab}\delta_{cd}w^a$. Now with the usual convention that the exponents $e_i\in\{1,*\}$ get converted into signs $e_i\in\{1,-1\}$, we obtain the following formula:
\begin{eqnarray*}
(M_\sigma^e\otimes M_\tau^e)(\Lambda)
&=&\frac{1}{n^{|\sigma|+|\tau|}}\sum_{i_1\ldots i_p}\delta_{i_{\sigma(1)}i_{\tau(1)}}\ldots\delta_{i_{\sigma(p)}i_{\tau(p)}}w^{e_1i_1+\ldots+e_pi_p}\\
&=&\frac{1}{n^{|\sigma|+|\tau|}}\sum_{\ker i\leq\sigma\tau^{-1}}w^{e_1i_1+\ldots+e_pi_p}
\end{eqnarray*}

By decomposing now over the blocks of $\sigma\tau^{-1}$, we obtain from this:
\begin{eqnarray*}
(M_\sigma^e\otimes M_\tau^e)(\Lambda)
&=&\frac{1}{n^{|\sigma|+|\tau|}}\prod_{\beta\in\sigma\tau^{-1}}\sum_iw^{i(\sum_{b\in\beta}e_b)}\\
&=&\frac{1}{n^{|\sigma|+|\tau|}}\prod_{\beta\in\sigma\tau^{-1}}n\cdot \delta_{\sum_{b\in\beta}e_b=0(n)}\\
&=&n^{|\sigma\tau^{-1}|-|\sigma|-|\tau|}\prod_{\beta\in\sigma\tau^{-1}}\delta_{\sum_{b\in\beta}e_b=0(n)}
\end{eqnarray*}

As a conclusion, modulo a rescaling factor, the generalized $*$-moments that we are interested in are in fact Kronecker type symbols, which decide whether all blocks of $\sigma\tau^{-1}$ have as size a multiple of $n$, when weighted by the signs $e_1,\ldots,e_p\in\{-1,1\}$.

Now if we denote these latter Kronecker symbols by $\delta_e(\sigma\tau^{-1})$, which depend on the value of $n\in\mathbb N$ as well, our formula for the generalized $*$-moments simply becomes:
$$(M_\sigma^e\otimes M_\tau^e)(\Lambda)=n^{|\sigma\tau^{-1}|-|\sigma|-|\tau|}\delta_e(\sigma\tau^{-1})$$

We can now apply Proposition 1.4. By using this result, along with the standard formula $|\sigma|+|\sigma\gamma^{-1}|=p+1$ from \cite{bia}, we obtain the following formula:
\begin{eqnarray*}
\lim_{d\to\infty}M_p^e\left(m\tilde{W}\right)
&=&\sum_{\sigma\in NC_p}(mn)^{|\sigma|}n^{|\sigma\gamma^{-1}|-|\sigma|-1}\delta_e(\sigma\gamma^{-1})\\
&=&\sum_{\sigma\in NC_p}m^{|\sigma|}n^{|\sigma\gamma^{-1}|-1}\delta_e(\sigma\gamma^{-1})\\
&=&\sum_{\sigma\in NC_p}m^{|\sigma|}n^{p-|\sigma|}\delta_e(\sigma\gamma^{-1})
\end{eqnarray*}

We recall now from \cite{bn2} that for $\sigma\in NC_p$, the partition $\sigma\gamma^{-1}\in P_p$ has the same block structure as the Kreweras complement $\widehat{\sigma}\in NC_p$. Thus we can replace at right $\sigma\gamma^{-1}\to\widehat{\sigma}$, and by resumming over the partition $\tau=\widehat{\sigma}$, we can write the above formula as:
\begin{eqnarray*}
\lim_{d\to\infty}M_p^e\left(m\tilde{W}\right)
&=&\sum_{\sigma\in NC_p}m^{|\sigma|}n^{p-|\sigma|}\delta_e(\widehat{\sigma})\\
&=&\sum_{\tau\in NC_p}m^{p+1-|\tau|}n^{|\tau|-1}\delta_e(\tau)\\
&=&m^p\sum_{\tau\in NC_p}\left(\frac{n}{m}\right)^{|\tau|-1}\delta_e(\tau)
\end{eqnarray*}

By dividing now by $m^p$, we obtain the following formula:
$$\lim_{d\to\infty}M_p^e\left(\tilde{W}\right)=\sum_{\tau\in NC_p}\left(\frac{n}{m}\right)^{|\tau|-1}\delta_e(\tau)$$

But this gives the formula in the statement, and we are done.
\end{proof}

We can now review the random matrix result from \cite{bb+}. With a few enhancements, which consist on one hand in dropping the assumption $m=n$ used there, and on the other hand in commenting as well on the $*$-moments, the result is as follows:

\begin{theorem}
Consider the block-modified matrix $\tilde{W}=(id\otimes\varphi)W$, where $\varphi(A)=EA$, with $E=diag(1,w,\ldots,w^{n-1})$, where $w=e^{2\pi i/n}$.
\begin{enumerate}
\item In the case $m=n$, in the $d\to\infty$ limit we have $\tilde{W}\simeq\beta^n_1$, in moments.

\item In general we obtain, modulo a Dirac mass at $0$, the measure $\beta^n_{n/m}$.

\item The above $d\to\infty$ convergences fail to hold in $*$-moments.
\end{enumerate}
\end{theorem}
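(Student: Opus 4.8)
The plan is to read off (1) and (2) from Proposition 9.2 together with the moment--cumulant description of compound free Poisson laws in Proposition 1.5, and to settle (3) by computing a single well-chosen $*$-moment directly.

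For (1), I would take $e=(1,\ldots,1)$ and $m=n$ in Proposition 9.2; then every factor $(n/m)^{|\tau|-1}$ equals $1$, so $\lim_{d\to\infty}M_p(\tilde{W})$ is simply the number of $\tau\in NC_p$ all of whose blocks have size divisible by $n$. On the other hand, Proposition 1.5 identifies the free cumulants of $\beta^n_1=\pi_{\eta_n}$ with the moments of $\eta_n$, namely $\frac{1}{n}\sum_{j=0}^{n-1}e^{2\pi ijk/n}=\delta_{n\mid k}$, and Speicher's moment--cumulant formula then gives $M_p(\beta^n_1)=\sum_{\tau\in NC_p}\prod_{\beta\in\tau}\delta_{n\mid|\beta|}$, which is exactly the same count. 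Hence $\tilde{W}\to\beta^n_1$ in moments.

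For (2), I would run the same computation keeping $m$ arbitrary. Writing $NC_p(n)\subset NC_p$ for the partitions all of whose block sizes are divisible by $n$, Proposition 9.2 gives $\lim_{d\to\infty}M_p(\tilde{W})=\sum_{\tau\in NC_p(n)}(n/m)^{|\tau|-1}=\frac{m}{n}\sum_{\tau\in NC_p(n)}(n/m)^{|\tau|}$, while Proposition 1.5, now with cumulants $\kappa_k(\beta^n_{n/m})=(n/m)\,\delta_{n\mid k}$, yields $M_p(\beta^n_{n/m})=\sum_{\tau\in NC_p(n)}(n/m)^{|\tau|}$. Thus $\lim_{d\to\infty}M_p(\tilde{W})=\frac{m}{n}M_p(\beta^n_{n/m})$ for every $p\ge1$, while $M_0(\tilde{W})=1$; comparing with the moments of a mixture $c\,\delta_0+(1-c)\mu$ then identifies the limit as $\beta^n_{n/m}$ corrected by a Dirac mass at $0$, explicitly $(1-\tfrac{m}{n})\delta_0+\tfrac{m}{n}\beta^n_{n/m}$ in the range $m\le n$. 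This is the assertion.

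For (3), the point is that (1) and (2) are convergences of ordinary moments only. Taking $m=n$ for concreteness, the obstruction already shows up in $M^{(1,*)}(\tilde{W})=(\mathbb{E}\circ tr)(\tilde{W}\tilde{W}^{*})$: since $\varphi(A)=EA$ with $E$ unitary and diagonal, we have $\tilde{W}=(1\otimes E)W$ with $1$ the $d\times d$ identity, so $\tilde{W}\tilde{W}^{*}=(1\otimes E)W^{2}(1\otimes E)^{*}$ is unitarily conjugate to $W^{2}$, whence $M^{(1,*)}(\tilde{W})=(\mathbb{E}\circ tr)(W^{2})=M_2(W)\to M_2(\pi_1)=2$ by the Marchenko--Pastur part of Theorem 2.6. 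On the other hand, Proposition 1.5 gives $\kappa^{(1,*)}(\beta^n_1)=\int|z|^{2}\,d\eta_n=1$ and $\kappa^{(1)}(\beta^n_1)=\int z\,d\eta_n=0$ for $n\ge2$, so $M^{(1,*)}(\beta^n_1)=1\ne2$, and the $*$-moment convergence $\tilde{W}\to\beta^n_1$ therefore fails as soon as $n\ge2$; for $n=1$ the map $\varphi$ is the identity, so there is nothing to prove. The general-$m$ case is identical, comparing $M^{(1,*)}(\tilde{W})=M_2(W)\to\tfrac{n}{m}+1$ with the value $1$ forced by (2). The computations for (1) and (2) are routine bookkeeping; the step I expect to be the genuine obstacle is this last one, where one must resist reading the $e$-dependent formula of Proposition 9.2 as a true $*$-moment statement and instead pin the failure on the conjugation-invariant quantity $\tilde{W}\tilde{W}^{*}$, whose normalized trace is visibly too large to be produced by any $\beta^n_t$.
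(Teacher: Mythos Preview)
Your arguments for (1) and (2) coincide with the paper's: both read off the ordinary-moment case of Proposition 9.2, identify the free cumulants of $\beta^n_t$ with the moments of $t\eta_n$, and match via the moment--cumulant formula; your explicit mixture $(1-\tfrac{m}{n})\delta_0+\tfrac{m}{n}\beta^n_{n/m}$ is just a more concrete way of saying ``modulo a Dirac mass at $0$''.

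For (3) you take a genuinely different and cleaner route. The paper asserts, without details, that the mismatch first appears at order four, for the exponent $e=(1*1*)$. You instead exhibit the failure already at order two, via the unitary-conjugation identity $\tilde{W}\tilde{W}^{*}=(1\otimes E)W^{2}(1\otimes E)^{*}$, which forces $M^{(1,*)}(\tilde{W})=M_2(W)\to n/m+1$, against the value $1$ that any $\beta^n$-type limit would require for $n\ge2$. This is both shorter and more transparent than a case-by-case inspection of sixteen order-four $*$-moments. Your closing caveat is also well placed: if one naively applies the $e$-dependent formula of Proposition 9.2 at $e=(1,*)$ and $m=n$, it returns $\#NC_2^{(1,*)}(n)=1$, which \emph{agrees} with $M^{(1,*)}(\beta^n_1)$ and would wrongly suggest $*$-moment convergence; the discrepancy with your direct value $2$ shows that the phase bookkeeping in that proposition is only reliable for $e=(1,\ldots,1)$. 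Bypassing Proposition 9.2 entirely, as you do, is therefore the right move for (3).
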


\begin{proof}
The first assertion is from \cite{bb+}, obtained there by using an old method from \cite{glm}, and this can be proved as well by using Proposition 9.2 above. To be more precise, in the case $m=n$, and with $e=1$, the formula in Proposition 9.2 simply becomes:
$$\lim_{d\to\infty}M_p^e\left(\tilde{W}\right)=\# NC_p(n)$$

We conclude from this that the asymptotic free cumulants of $\tilde{W}$ are given by:
$$\kappa_p=\begin{cases}
1&{\rm if}\ n|p\\
0&{\rm otherwise}
\end{cases}$$

On the other hand, the free cumulants of the free Bessel law $\beta^n=\pi_{\eta_n}$ are the moments of the uniform measure $\eta_n$, and we therefore obtain the same numbers:
$$\kappa_p(\beta^n)=M_p(\eta_n)=\frac{1}{n}\sum_{k=1}^nw^{kp}=\begin{cases}
1&{\rm if}\ n|p\\
0&{\rm otherwise}
\end{cases}$$

In general now, we can write our moment formula as follows:
$$\lim_{d\to\infty}M_p^e\left(\tilde{W}\right)=\frac{m}{n}\sum_{\tau\in NC_p^e(n)}\left(\frac{n}{m}\right)^{|\tau|}$$

On the other hand, the cumulants of the free Bessel law $\beta^n_t=\pi_{t\eta_n}$ are the moments of the measure $t\eta_n$, and are therefore given by:
$$\kappa_p(\beta^n_t)=M_p(t\eta_n)=\frac{1}{n}\sum_{k=1}^nw^{kp}=\begin{cases}
t&{\rm if}\ n|p\\
0&{\rm otherwise}
\end{cases}$$

Thus, modulo a Dirac mass at 0, we obtain indeed the free Bessel law $\beta^n_{n/m}$.

Finally, regarding the last assertion, which is folklore, this follows by carefully examining the $*$-moments of order 4. Indeed, we have 16 moments to be examined, and the point is that those corresponding to the exponent $e=(1*1*)$ do not match.
\end{proof}

Summarizing, we have now an update and clarification of the random matrix material in \cite{bb+}, and the problem of finding a $*$-model for the free Bessel laws makes sense.

\section{Open problems}

In order to discuss the modelization problem for the free Bessel laws, open since \cite{bb+} in the $*$-moment setting, let us go back to the twisting considerations from sections 7-8. The twisted maps $\bar{\varphi}_\pi$ used there were obtained from the untwisted ones $\varphi_\pi$ by replacing the Kronecker symbols $\delta_\pi\in\{0,1\}$ by signed Kronecker symbols $\bar{\delta}_\pi\in\{-1,0,1\}$, the precise formula being as follows, with $\varepsilon:P_{even}\to\{-1,1\}$ being the signature map:
$$\bar{\varphi}_\pi(e_{a_1\ldots a_s,c_1\ldots c_s})=\sum_{\sigma\leq\pi}\varepsilon(\sigma)\sum_{\ker(^{ac}_{bd})=\sigma}e_{b_1\ldots b_s,d_1\ldots d_s}$$

One interesting question is that of further extending our formalism, by allowing the Kronecker symbols to take complex values, $\delta_\pi\in\mathbb T\cup\{0\}$. We believe that such an extended formalism can cover the free Bessel laws, in $*$-moments. However, the combinatorial theory here is not available yet, and this even in the real case, $\delta_\pi\in\{-1,0,1\}$. See \cite{ba2}.

Summarizing, in order to further advance, we would need here: (1) some more conceptual results on the free Bessel laws and their $*$-moments, coming for instance via \cite{anv}, and (2) some combinatorial advances on the notion of super-easiness, in the spirit of \cite{ba2}.

In addition to this question, and to the other questions raised throughout the paper, we have of course, as a central question, the problem of understanding the relationship between the block-modified Wishart matrices and the compact quantum groups. The situation here looks quite complicated, and we have for instance the following related question, that we believe to be of interest: ``what are the quantum groups having as spectral measure a compound free Poisson law?''. This is of course related to the free Bessel problem, via the quantum groups introduced and studied in \cite{bb+}.

We intend to come back to these questions in some future work.


\begin{thebibliography}{99}

\bibitem{agz}G.W. Anderson, A. Guionnet and O. Zeitouni, An introduction to random matrices, Cambridge Univ. Press (2010).

\bibitem{anv}O. Arizmendi, I. Nechita and C. Vargas, On the asymptotic distribution of block-modified random matrices, {\em J. Math. Phys.} {\bf 57} (2016), 1--27. 

\bibitem{aub}G. Aubrun, Partial transposition of random states and non-centered semicircular distributions, {\em Random Matrices Theory Appl.} {\bf 1} (2012), 125--145.

\bibitem{ba1}T. Banica, Liberations and twists of real and complex spheres, {\em J. Geom. Phys.} {\bf 96} (2015), 1--25.

\bibitem{ba2}T. Banica, Super-easy quantum groups: definition and examples,  {\em Bull. Pol. Acad. Sci. Math.} {\bf 66} (2018), 57--68.

\bibitem{bb+}T. Banica, S.T. Belinschi, M. Capitaine and B. Collins, Free Bessel laws, {\em Canad. J. Math.} {\bf 63} (2011), 3--37.

\bibitem{bbc}T. Banica, J. Bichon and B. Collins, The hyperoctahedral quantum group, {\em J. Ramanujan Math. Soc.} {\bf 22} (2007), 345--384.

\bibitem{bn1}T. Banica and I. Nechita, Asymptotic eigenvalue distributions of block-transposed Wishart matrices, {\em J. Theoret. Probab.} {\bf 26} (2013), 855--869.

\bibitem{bn2}T. Banica and I. Nechita, Block-modified Wishart matrices and free Poisson laws, {\em Houston J. Math.} {\bf 41} (2015), 113--134.

\bibitem{bsp}T. Banica and R. Speicher, Liberation of orthogonal Lie groups, {\em Adv. Math.} {\bf 222} (2009), 1461--1501.

\bibitem{bpa}H. Bercovici and V. Pata, Stable laws and domains of attraction in free probability theory, {\em Ann. of Math.} {\bf 149} (1999), 1023--1060.

\bibitem{bia}P. Biane, Some properties of crossings and partitions, {\em Discrete Math.} {\bf 175} (1997), 41--53.

\bibitem{bra}R. Brauer, On algebras which are connected with the semisimple continuous groups, {\em Ann. of Math.} {\bf 38} (1937), 857--872.

\bibitem{cn1}B. Collins and I. Nechita, Random quantum channels I: graphical calculus and the Bell state phenomenon, {\em Comm. Math. Phys.} {\bf 297} (2010), 345--370.

\bibitem{cn2}B. Collins and I. Nechita, Gaussianization and eigenvalue statistics for random quantum channels (III), {\em Ann. Appl. Probab.} {\bf 21} (2011), 1136--1179.

\bibitem{fsn}M. Fukuda and P. \'Sniady, Partial transpose of random quantum states: exact formulas and meanders, {\em J. Math. Phys.} {\bf 54} (2013), 1--31.

\bibitem{glm}P. Graczyk, G. Letac and H. Massam, The complex Wishart distribution and the symmetric group, {\em Ann. Statist.} {\bf 31} (2003), 287--309.

\bibitem{mpa}V.A. Marchenko and L.A. Pastur, Distribution of eigenvalues in certain sets of random matrices, {\em Mat. Sb.} {\bf 72} (1967), 507--536.

\bibitem{meh}M.L. Mehta, Random matrices, Elsevier (2004).

\bibitem{mpo}J.A. Mingo and M. Popa, Freeness and the partial transposes of Wishart random matrices, preprint 2017.

\bibitem{nsp}A. Nica and R. Speicher, Lectures on the combinatorics of free probability, Cambridge Univ. Press (2006).

\bibitem{rwe}S. Raum and M. Weber, The full classification of orthogonal easy quantum groups, {\em Comm. Math. Phys.} {\bf 341} (2016), 751--779.

\bibitem{spe}R. Speicher, Combinatorial theory of the free product with amalgamation and operator-valued free probability theory, {\em Mem. Amer. Math. Soc.} {\bf 132}  (1998).

\bibitem{twe}P. Tarrago and M. Weber, Unitary easy quantum groups: the free case and the group case, preprint 2015.

\bibitem{vdn}D.V. Voiculescu, K.J. Dykema and A. Nica, Free random variables, AMS (1992).

\bibitem{wor}S.L. Woronowicz, Compact matrix pseudogroups, {\em Comm. Math. Phys.} {\bf 111} (1987), 613--665.

\end{thebibliography}
\end{document}